\title{NOTAS SOBRE AN\'{A}LISIS FUNCIONAL}
\author{Jaime Chica}
\date{7 de Agosto de 2007}
\address{}
\email{}
\newtheorem{defin}{Definici\'{o}n}
\newtheorem{teor}{Teorema}
\newtheorem{corol}{Corolario}
\newtheorem{obser}{Observaci\'{o}n}
\newtheorem{prop}{Proposici\'{o}n}
\newtheorem{ejer}{Ejercicio}
\newtheorem{ejem}{Ejemplo}
\newtheorem{lem}{Lema}
\newtheorem{sol}{Soluciòn}
\newtheorem{con}{Consecuencia}
\begin{document}\maketitle
\begin{abstract}
Estas son las notas de clase no publicadas, sobre el curso de \'{A}nalisis Funcional, impartido por el Prof. Jaime Chica, en la Facultad de Matem\'{a}ticas de la U de A.
\end{abstract}
\tableofcontents
\fontfamily{ppl}\selectfont
Sean $E,F\in\text{Norm}.$\\\\
Llamaremos $\mathcal{L}(E,F)=\bigl\{T:E\longrightarrow F\diagup \text{T es Apli. Lineal}\bigr\}\\\\
\mathcal{L}_c(E,F)=\bigl\{T:E\longrightarrow F\diagup \text{T es A.L Continua}\bigr\}$\\\\
La proposici\'{o}n que sigue es important\'{i}sima en todo el escrito.\\\\
\begin{prop}[$\textbf{\text{Continuidad de una A.L. entre Espacios Normados}}$]
Sean E,F esp. Normados y $T:E\longrightarrow F$ una A.L, i.e, $T\in\mathcal{L}(E,F).$\\\\
Las siguientes afirmaciones son equivalentes:\\
\begin{enumerate}
\item T es continua, o sea, $T\in\mathcal{L}_c(E,F)$\\\\
\item T es continua en 0.\\\\
\item $\exists M>0$ tal que $\forall x\in E:\|T(x)\|\leqslant M\|x\|.$\\\\
\item T es Uniformemente continua.\\\\
\end{enumerate}
\end{prop}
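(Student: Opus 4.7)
The plan is to establish the four-way equivalence by proving the cyclic chain $(1) \Rightarrow (2) \Rightarrow (3) \Rightarrow (4) \Rightarrow (1)$, which reduces the work to four implications, two of which are essentially free and one of which carries the entire substance.

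The implications $(1) \Rightarrow (2)$ and $(4) \Rightarrow (1)$ are immediate: global continuity trivially gives continuity at the single point $0$, and uniform continuity trivially gives pointwise continuity. For $(3) \Rightarrow (4)$ I would exploit linearity to convert the norm bound into a Lipschitz estimate: since $T(x) - T(y) = T(x - y)$, the hypothesis yields $\|T(x) - T(y)\| \leqslant M \|x - y\|$, and then for any $\varepsilon > 0$ the choice $\delta = \varepsilon / M$ (independent of $x, y$) witnesses uniform continuity.

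The substantive step is $(2) \Rightarrow (3)$, and this is where I expect the main obstacle. The idea is to apply continuity at $0$ with the fixed choice $\varepsilon = 1$, using $T(0) = 0$ (immediate from linearity) to obtain some $\delta > 0$ such that $\|x\| \leqslant \delta$ implies $\|T(x)\| \leqslant 1$. For an arbitrary nonzero $x \in E$, I would rescale by setting $y = \delta x / \|x\|$; then $\|y\| = \delta$, so $\|T(y)\| \leqslant 1$, and pulling the scalar $\delta / \|x\|$ out of $T$ by linearity yields $\|T(x)\| \leqslant (1/\delta) \|x\|$. Taking $M = 1/\delta$ (the case $x = 0$ being trivial) closes the argument.

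The subtlety is concentrated entirely in the rescaling trick above: without linearity one would have only a local bound near $0$, and it is precisely the positive homogeneity of $T$ that promotes this local information into a global estimate valid on all of $E$. Once this bridge between the topological hypothesis (continuity at a single point) and the algebraic-analytic conclusion (a quantitative norm inequality) is in place, the remaining three implications are routine.
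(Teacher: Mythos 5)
Your proposal is correct and follows essentially the same route as the paper: the same cyclic chain $(1)\Rightarrow(2)\Rightarrow(3)\Rightarrow(4)\Rightarrow(1)$, with the same rescaling trick $x\mapsto \delta x/\|x\|$ carrying the weight in $(2)\Rightarrow(3)$ and the same Lipschitz choice $\delta=\varepsilon/M$ in $(3)\Rightarrow(4)$. The only cosmetic difference is that the paper uses $\varepsilon=1/2$ and the point $\tfrac{\delta}{2}\tfrac{x}{\|x\|}$ to keep the norm strictly below $\delta$, which is exactly the shrinking you would need to justify your closed-ball formulation $\|x\|\leqslant\delta\Rightarrow\|T(x)\|\leqslant 1$.
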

\begin{proof}
$(1)\Rightarrow(2):$ trivial.\\\\
$(2)\Rightarrow(3).$ Supongamos que T es una A.L. continua en 0. Veamos que: $\exists M>0$ tal que \begin{gather}\forall x\in E:\|T(x)\|\leqslant M\|x\|\end{gather}\\\\
Tomemos $\epsilon=1/2$\\\\
Como T es continua en 0 y $\epsilon=1/2>0,\exists\delta>0\,\,\text{tal que $\forall x\in E:\|x\|<\delta\Rightarrow\|T(x)\|<1/2\hspace{0.5cm}\star$}$\\\\
Tomemos $\underset{fijo}{\underbrace{x}}\in E, x\neq0.$\\\\
Entonces $\begin{Vmatrix}\frac{\delta}{2}\frac{x}{\|x\|}\end{Vmatrix}=\frac{\delta}{2}\frac{\|x\|}{\|x\|}=\frac{\delta}{2}<\delta$\\\\
y por $\hspace{0.5cm}\star,\begin{Vmatrix}T\Biggl(\frac{\delta}{2}\frac{x}{\|x\|}\Biggr)\end{Vmatrix}<1/2$\\\\
O sea $\frac{\delta}{2}\frac{1}{\|x\|}\|T(x)\|<1/2,$ i.e: $\|T(x)\|<\frac{1}{\delta}\|x\|.$\\\\
Luego si llamamos $M=\frac{1}{\delta}$ hemos demostrado que $\exists M>0\,\,\text{tal que $\forall x\in E,x\neq0:\|T(x)\|\leqslant M\|x\|$}$\\\\
desigualdad que es obvia para el caso $x=0.$\\\\
$(3)\Rightarrow(4).$ Supongamos ahora que $T:E\longrightarrow F$ es A.L. y que $\exists M>0$ tal que $\forall x\in E:\|T(x)\|\leqslant M\|x\|\hspace{0.5cm}\star\star$\\\\
Veamos que, T es Unif. continua.\\\\
Sea $\epsilon>0.$ Tomemos $\delta=\frac{\epsilon}{M}.$\\\\
Entonces, $\forall x,y\in E$ con $\|x-y\|<\delta=\frac{\epsilon}{M},\|T(x)-T(y)\|=\|T(x-y)\|\underset{\overset{\uparrow}{\star\star}}{\leqslant} M\|x-y\|<M\frac{\epsilon}{M}=\epsilon$\\\\
Esto demuestra que T es Unif. continua.\\\\
$(4)\Rightarrow(1)$ Supongamos ahora que $T:E\longrightarrow F$ es unif. continua. Veamos que T es continua.\\\\
Sea $\epsilon>0.$ Como T es unif. continua, $\exists\delta>0$ tal que $\forall x,y\in E$ con \begin{gather}\|x-y\|<\delta:\|T(x)-T(y)\|<\epsilon\end{gather}\\\\
Tomemos $\underset{fijo}{\underbrace{x_0}}\in E$ y veamos que T es continua en $x_0.$\\\\
Como $\epsilon>0,$ debemos demostrar que $\exists\delta'>0$ tal que $\forall x\in E,\|x-x_0\|<\delta', \|T(x)-T(x_0)\|<\epsilon.$\\\\
Tomemos $\delta'=\frac{\delta}{2}>0.$\\\\
Entonces, $\forall x\in E$ con $\|x-x_0\|<\dfrac{\delta}{2}<\delta,$ se tiene por [2] que $\|T(x)-T(y)\|<\epsilon.$\\\\
As\'{i} que dado $\epsilon>0,\exists\delta>0$ tal que $\forall x\in E,$ si $\|x-x_0\|<\delta$ entonces $\|T(x)-T(x_0)\|<\epsilon,$ lo que demuestra que T es continua en $x_0$ y como $x_0$ es cualquier punto de E, T es continua en E.\\\\
\end{proof}
Sabemos que $\mathcal{L}(E,F)$ es un $\mathbb{K}$ esp. vectorial.\\\\
Podemos ahora probar que $\mathcal{L}_c(E,F)$ es un subespacio de $\mathcal{L}(E,F).$\\
\begin{prop}
Sean E,F esp. normados. Entonces $\mathcal{L}_c(E,F)\subset\mathcal{L}(E,F).$\\
\end{prop}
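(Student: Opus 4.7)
The plan is to use the subspace criterion: to show that $\mathcal{L}_c(E,F)$ is a subspace of $\mathcal{L}(E,F)$, I will verify that (i) $\mathcal{L}_c(E,F)\subset\mathcal{L}(E,F)$, (ii) the zero map belongs to $\mathcal{L}_c(E,F)$, and (iii) $\mathcal{L}_c(E,F)$ is closed under addition and scalar multiplication. The inclusion (i) is immediate from the definitions, since every continuous linear map is in particular a linear map. For (ii), the map $0:E\longrightarrow F$ satisfies $\|0(x)\|=0\leqslant 1\cdot\|x\|$ for all $x\in E$, so by item (3) of the previous proposition it lies in $\mathcal{L}_c(E,F)$.

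The main work is (iii), and the key tool will be the characterization $(1)\Leftrightarrow(3)$ of the preceding proposition, which reduces continuity to the existence of a bound $\|T(x)\|\leqslant M\|x\|$. First I would take $T,S\in\mathcal{L}_c(E,F)$. By (3) there exist $M_1,M_2>0$ such that $\|T(x)\|\leqslant M_1\|x\|$ and $\|S(x)\|\leqslant M_2\|x\|$ for every $x\in E$. Applying the triangle inequality in $F$,
\begin{gather*}
\|(T+S)(x)\|=\|T(x)+S(x)\|\leqslant \|T(x)\|+\|S(x)\|\leqslant (M_1+M_2)\|x\|,
\end{gather*}
so $T+S$ satisfies condition (3) with $M=M_1+M_2>0$, and hence $T+S\in\mathcal{L}_c(E,F)$.

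Next, for $\lambda\in\mathbb{K}$ and $T\in\mathcal{L}_c(E,F)$, pick $M>0$ with $\|T(x)\|\leqslant M\|x\|$. Then for every $x\in E$,
\begin{gather*}
\|(\lambda T)(x)\|=|\lambda|\,\|T(x)\|\leqslant |\lambda|M\|x\|.
\end{gather*}
If $\lambda=0$ we are in the trivial case already handled in (ii); if $\lambda\neq 0$, then $|\lambda|M>0$ and again by (3) we conclude $\lambda T\in\mathcal{L}_c(E,F)$. I do not expect a real obstacle here: the only thing to be careful about is not to forget the case $\lambda=0$ when invoking the characterization, which requires a strictly positive constant $M$. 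With the four verifications in place, $\mathcal{L}_c(E,F)$ is a vector subspace of $\mathcal{L}(E,F)$, which in particular establishes the stated inclusion.
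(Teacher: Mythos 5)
Your proof is correct and follows essentially the same route as the paper: it verifies nonemptiness via the zero map and closure under addition and scalar multiplication by invoking the bound characterization $\|T(x)\|\leqslant M\|x\|$ from the preceding proposition, with the constants $M_1+M_2$ and $|\lambda|M$ and the separate treatment of the case $\lambda=0$. No gaps; your explicit remark that the characterization requires a strictly positive constant is a small point of care the paper also observes.
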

\begin{proof}
\begin{enumerate}
\item[i)] Consideremos el caso 0.\\
$\begin{diagram}
\node{0:E}\arrow{e,t}\\
\node{F}
\end{diagram}$\\
$\begin{diagram}
\node{x}\arrow{e,t}\\
\node{0_x=0_F}
\end{diagram}$\\\\
Veamos que $0\in\mathcal{L}_c(E,F).$\\\\
Tomemos $\underset{fijo}{\underbrace{M}}>0.$ Entonces,$\forall x\in E:\|0_x\|=\|0_F\|=0\leqslant M\|x\|.$\\\\
Esto demuestra que $0\in\mathcal{L}_c(E,F)$ y por tanto, $\mathcal{L}_c(E,F)\neq\emptyset.$\\\\
\item[ii)] Sean $T_1,T_2\in\mathcal{L}_c(E,F).$ Veamos que $(T_1+T_2)\subset\mathcal{L}_c(E,F)$\\\\
Es claro que $T_1+T_2\in\mathcal{L}(E,F)$\\\\
Resta demostrar que $\exists M>0$ tal que \begin{gather}\label{sum}\forall x\in E:\|(T_1+T_2)_x\|\leqslant M\|x\|\end{gather}\\\\
Como $T_1\in\mathcal{L}_c(E,F),\exists M_1>0$ tal que \begin{gather}\forall x\in E:\|(T_1)_x\|\leqslant M_1\|x\|\end{gather}\\\\
Como $T_2\in\mathcal{L}_c(E,F),\exists M_2>0$ tal que \begin{gather}\forall x\in E:\|(T_2)_x\|\leqslant M_2\|x\|\end{gather}\\\\
Asì que:\\
$\|(T_1+T_2)_x\|=\|T_1(x)+T_2(x)\|\underset{\overset{\nearrow}{(4),(5)}}{\leqslant}\|T_1(x)\|+\|T_2(x)\|\leqslant M_1\|x\|+M_2\|x\|=(M_1+M_2)\|x\|$\\\\
y se tiene ~\eqref{sum}\\\\
\item[iii)] Se tiene $\alpha\in\mathbb{K}$ y $T\in\mathcal{L}_c(E,F).$ Veamos que \begin{gather}\label{eq}(\alpha T)\in\mathcal{L}_c(E,F)\end{gather}\\\\
Si $\alpha=0, \alpha T=0\in\mathcal{L}_c(E,F)$\\\\
Supongamos $\alpha\neq0.$\\\\
Como \begin{gather}T\in\mathcal{L}_c(E,F),\exists M>0\,\,\text{tal que $\forall x\in E:\|T(x)\|\leqslant M\|x\|$}\end{gather}\\\\
Ahora, $\forall x\in E:\|\bigl(\alpha T\bigr)(x)\|=|\alpha|\|T(x)\|\leqslant|\alpha|M\|x\|,$\\\\
lo que nos demuestra ~\eqref{eq}\\\\
\end{enumerate}
\end{proof}

\begin{prop}
Sean E,F:ELN. Si $dimE=n,$ toda AL. $T:E\longrightarrow F$ es continua.\\
\end{prop}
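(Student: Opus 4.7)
El plan es usar la caracterizaci\'{o}n (3) de la Proposici\'{o}n anterior: basta construir una constante $M>0$ tal que $\|T(x)\|\leqslant M\|x\|$ para todo $x\in E$. Para ello fijar\'{e} una base $\{e_1,\ldots,e_n\}$ de $E$; cada $x\in E$ admite una escritura \'{u}nica $x=\sum_{i=1}^n\alpha_i e_i$ con $\alpha_i\in\mathbb{K}$, y sobre estos coeficientes introducir\'{e} la norma auxiliar $\|x\|_1:=\sum_{i=1}^n|\alpha_i|$.

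La parte f\'{a}cil es mayorar $\|T(x)\|$ por $\|x\|_1$: usando que $T$ es lineal y la desigualdad triangular en $F$ se obtiene
\[
\|T(x)\|=\Bigl\|\sum_{i=1}^n \alpha_i T(e_i)\Bigr\|\leqslant\sum_{i=1}^n|\alpha_i|\,\|T(e_i)\|\leqslant C\,\|x\|_1,
\]
con $C:=\max_{1\leqslant i\leqslant n}\|T(e_i)\|$. Por tanto el problema queda reducido a hallar $K>0$ con $\|x\|_1\leqslant K\|x\|$ para todo $x\in E$; una vez logrado esto, $M:=CK$ cerrar\'{i}a el argumento v\'{i}a la caracterizaci\'{o}n (3).

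El verdadero obst\'{a}culo es precisamente esa \'{u}ltima desigualdad, equivalente al hecho de que \emph{en dimensi\'{o}n finita todas las normas son equivalentes}. Mi plan para probarla es topol\'{o}gico: considerar $f:\mathbb{K}^n\to\mathbb{R}$ definida por $f(\alpha_1,\ldots,\alpha_n):=\bigl\|\sum_{i=1}^n\alpha_i e_i\bigr\|$, verificar que es continua respecto de la norma usual de $\mathbb{K}^n$ (la misma cuenta del p\'{a}rrafo anterior, aplicada a la identidad en lugar de a $T$), y restringirla a la esfera $S:=\{\alpha\in\mathbb{K}^n:\sum_{i=1}^n|\alpha_i|=1\}$. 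Como $S$ es cerrada y acotada en $\mathbb{K}^n$ resulta compacta; la independencia lineal de la base impide que $f$ se anule en $S$, de modo que $f$ alcanza all\'{i} un m\'{i}nimo $m>0$. Normalizando $x\neq0$ por $\|x\|_1$ se deduce $\|x\|\geqslant m\,\|x\|_1$, as\'{i} que basta tomar $K=1/m$.

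Combinando las dos estimaciones llego a $\|T(x)\|\leqslant(C/m)\|x\|$ para todo $x\in E$, y la Proposici\'{o}n de continuidad garantiza que $T\in\mathcal{L}_c(E,F)$. El punto delicado es, sin duda, la compacidad de $S$ y el argumento de que una funci\'{o}n continua y positiva sobre un compacto est\'{a} separada de cero; todo el resto es \'{a}lgebra lineal, desigualdad triangular y la caracterizaci\'{o}n ya demostrada.
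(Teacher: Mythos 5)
Tu demostraci\'{o}n es correcta, pero sigue una ruta genuinamente distinta de la del texto. El texto define sobre $E$ la norma auxiliar ${\|x\|}^{*}=\max\bigl\{\|x\|_E,\|T(x)\|_F\bigr\}$, invoca el teorema de que en dimensi\'{o}n finita todas las normas son equivalentes para obtener $\beta>0$ con ${\|x\|}^{*}\leqslant\beta\|x\|_E$, y concluye $\|T(x)\|_F\leqslant{\|x\|}^{*}\leqslant\beta\|x\|_E$. T\'{u}, en cambio, no usas ese teorema como caja negra: reduces todo a la desigualdad $\|x\|_1\leqslant K\|x\|$ para la norma de coordenadas $\|x\|_1=\sum_i|\alpha_i|$, y la demuestras directamente con el argumento cl\'{a}sico de compacidad (la esfera $\{\alpha:\sum|\alpha_i|=1\}$ es compacta en $\mathbb{K}^n$ por Heine--Borel, la funci\'{o}n $\alpha\mapsto\bigl\|\sum_i\alpha_ie_i\bigr\|$ es continua y no se anula all\'{i} por la independencia lineal de la base, luego alcanza un m\'{i}nimo $m>0$). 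Lo que gana tu versi\'{o}n es autonom\'{i}a: el texto apoya esta proposici\'{o}n en la equivalencia de normas, que s\'{o}lo se demuestra mucho m\'{a}s adelante (la proposici\'{o}n tomada de Bachman--Narici), de modo que hay una referencia hacia adelante; tu argumento es autocontenido y, de hecho, demuestra de paso la mitad no trivial de esa equivalencia. Lo que gana la versi\'{o}n del texto es brevedad una vez que la equivalencia de normas est\'{a} disponible, y el truco de la norma $\max\bigl\{\|x\|_E,\|T(x)\|_F\bigr\}$ evita fijar una base. Cabe se\~{n}alar que tu t\'{e}cnica de compacidad es esencialmente la misma que el propio texto emplea despu\'{e}s para probar que $T^{-1}$ es continua en la proposici\'{o}n $E\cong\mathbb{K}^n$, as\'{i} que ambas herramientas conviven en el escrito. \'{U}nico detalle menor: si $T=0$ tu constante $C=\max_i\|T(e_i)\|$ vale $0$ y $M=CK$ no es estrictamente positiva como pide la caracterizaci\'{o}n (3); basta tomar $M=\max\{CK,1\}$ o tratar ese caso trivial aparte.
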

\begin{proof}
Sea \\$\begin{diagram}\node[2]{T:E}\arrow{e,t}\\
\node{F}\end{diagram}$\\
$\begin{diagram}\node[2]{x}\arrow{e,t}\\
\node{T(x)}\end{diagram}$\\\\
T; AL, donde E,F:ELN.\\\\
La funciòn \\$\begin{diagram}\node{{\|\|}^{*}:E}\arrow{e,t}\\
\node{\mathbb{R}}\end{diagram}$\\
$\begin{diagram}\node{x}\arrow{e,t}\\
\node{{\|x\|}^{*}=màx\bigl\{\|x\|_E\|T(x)\|_F\bigr\}}\end{diagram}$\\\\
es una norma en E.\\\\
En efecto, \begin{itemize}\item ${\|x\|}^{*}\geqslant0$\\\\
Si ${\|x\|}^{*}=0,\text{màx$\bigl\{\|x\|_E\|T(x)\|_F\bigr\}=0$}\Rightarrow\|x\|_E=0,\|T(x)\|_F=0$\\\\
\item ${\|\alpha x\|}^{*}=màx\bigl\{\|x\|_E\|T(x)\|_F\bigr\}\\\\
=|\alpha|màx\bigl\{\|x\|_E\|T(x)\|_F\bigr\}\\\\
=|\alpha|{\|x\|}^{*}$\\\\
\item Veamos la desigualdad triangular:\\\\
${\|x+y\|}^{*}=màx\bigl\{\|x+y\|_E\|T(x+y)\|_F\bigr\}\\\\
=\|x+y\|_E\leqslant\|x\|_E+\|y\|_E\\\\
\leqslant màx\bigl\{\|x\|_E\|T(x)\|_F\bigr\}+màx\bigl\{\|y\|_E\|T(y)\|_F\bigr\}\\\\
={\|x\|}^{*}+{\|y\|}^{*}\\\\
\|T(x+y)\|_F=\|T(x)+T(y)\|_F\leqslant\|T(x)\|_F+\|T(y)\|_F\\\\
\leqslant màx\bigl\{\|x\|_E\|T(x)\|_F\bigr\}+màx\bigl\{\|y\|_E\|T(y)\|_F\bigr\}\\\\
={\|x\|}^{*}+{\|y\|}^{*}$\\\\
Como ${\|\|}^{*}$ es una norma en E y $dimE=n,{\|\|}^{*}\sim\|\|_E$ y por tanto,\\ $\exists\beta>0\,\,\text{tal que $\forall x\in E:{\|x\|}^{*}\leqslant\beta\|x\|_E.$}$\\\\
Asì que $\exists\beta>0\,\,\text{tal que $\forall x\in E:$}\\\\
\|T(x)\|_F\leqslant màx\bigl\{\|x\|_E\|T(x)\|_F\bigr\}={\|x\|}^{*}\leqslant\beta\|x\|_E\\\\
{\|x\|}^{*}=màx\bigl\{\|x\|_E\|T(x)\|_F\bigr\}.$\\\\
Lo que nos demuestra que T es continua.\\\\
\end{itemize}
\end{proof}
Esto demuestra que M es cota superior del conjunto de n\'{u}meros reales\\ $\bigl\{\|T(x)\|,\|x\|\leqslant 1\bigr\}.$ Luego $\exists\,\,\underset{\|x\|\leqslant1}{\sup\bigl\{\|T(x)\|\bigr\}}.$\\\\
\begin{defin}
Al n\'{u}mero real $\underset{\|x\|\leqslant1}{\sup\bigl\{\|T(x)\|\bigr\}}$ se le llama la $\underline{\text{Norma de T}}$ y se indica $\|T\|=\underset{\|x\|\leqslant1}{\sup\bigl\{\|T(x)\|\bigr\}}$\\
\end{defin}
\begin{prop}
Sean E y F: ELN. Sabemos que $\mathcal{L}_c(E,F):$ K esp. vect.\\\\
La funciòn $\begin{diagram}\node{\|\|:\mathcal{L}_c(E,F)}\arrow{e,t}\\
\node{\mathbb{R}}\end{diagram}$\\
$\begin{diagram}\node[2]{T}\arrow{e,t}\\
\node{\|T\|=\underset{\|x\|\leqslant1}{\sup\bigl\{\|T(x)\|\bigr\}}}\end{diagram}$\\\\
es una norma en $\mathcal{L}_c(E,F).$\\
\end{prop}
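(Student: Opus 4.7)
El plan es verificar los tres axiomas de norma sobre $\mathcal{L}_c(E,F)$, apoyándose en que para cada $T\in\mathcal{L}_c(E,F)$ el supremo $\sup_{\|x\|\leq 1}\|T(x)\|$ existe como número real finito y no negativo (hecho ya establecido en el párrafo previo a la definición). Denoto $B=\{x\in E:\|x\|\leq 1\}.$

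Primero trataría la \emph{no degeneración}. La desigualdad $\|T\|\geq 0$ es inmediata por ser supremo de reales no negativos. Para la recíproca, supondría $\|T\|=0,$ de modo que $\|T(x)\|=0$ para todo $x\in B.$ El punto clave, y lo que espero sea el paso un poco más delicado, es pasar de $B$ a todo $E$: dado $y\in E$ con $y\neq 0,$ el vector $y/\|y\|$ está en $B,$ luego $\|T(y/\|y\|)\|=0,$ y por la linealidad de $T$ resulta $\|T(y)\|=\|y\|\cdot\|T(y/\|y\|)\|=0,$ es decir $T(y)=0_F.$ Como además $T(0_E)=0_F,$ se concluye $T\equiv 0.$

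Enseguida pasaría a la \emph{homogeneidad positiva}. Para $\alpha\in\mathbb{K}$ y $T\in\mathcal{L}_c(E,F),$ usaría la identidad $\|(\alpha T)(x)\|=|\alpha|\,\|T(x)\|$ y sacaría la constante $|\alpha|\geq 0$ fuera del supremo:
\[
\|\alpha T\|=\sup_{x\in B}\|\alpha T(x)\|=|\alpha|\sup_{x\in B}\|T(x)\|=|\alpha|\,\|T\|,
\]
lo cual es válido tanto si $\alpha=0$ (ambos lados dan $0$) como si $\alpha\neq 0.$

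Finalmente, la \emph{desigualdad triangular}. Para $T_1,T_2\in\mathcal{L}_c(E,F)$ y $x\in B$ usaría la desigualdad triangular de $\|\cdot\|_F$ y la monotonía del supremo:
\[
\|(T_1+T_2)(x)\|\leq\|T_1(x)\|+\|T_2(x)\|\leq\|T_1\|+\|T_2\|,
\]
y tomando supremo sobre $x\in B$ se obtiene $\|T_1+T_2\|\leq\|T_1\|+\|T_2\|.$ Con esto quedarían verificados los tres axiomas y, por tanto, que $\|\cdot\|$ es una norma en $\mathcal{L}_c(E,F).$
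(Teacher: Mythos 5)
Tu propuesta es correcta y sigue esencialmente el mismo camino que la prueba del texto: en la no degeneraci\'on ambos pasan de la bola unidad a todo $E$ normalizando $y/\|y\|$ y usando la linealidad, la homogeneidad se obtiene extrayendo $|\alpha|$ del supremo, y la desigualdad triangular se deduce acotando $\|T_1(x)+T_2(x)\|\leqslant\|T_1\|+\|T_2\|$ para $\|x\|\leqslant1$ y tomando supremo. De hecho tu redacci\'on es m\'as limpia, pues el texto contiene erratas (por ejemplo, llama cota superior a $\|T_1(x)\|+\|T_2(x)\|$ cuando la cota correcta es $\|T_1\|+\|T_2\|$), pero el argumento es el mismo.
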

\begin{proof}
\begin{enumerate}
\item Es claro que $\|T\|\geqslant0$\\\\
\item Supongamos que $\|T\|=0.$ Veamos que $T=0$. Entonces $\underset{\|x\|\leqslant1}{\sup\bigl\{\|T(x)\|\bigr\}}=0.$\\\\
O sea que $\forall x\in E$ con $\|x\|\leqslant1:\|T(x)\|=0\Rightarrow T(x)=0.$\\\\
Esto demuestra que \begin{gather}\label{a}\forall x\in E \,\,con \|x\|\leqslant1:\|T(x)\|=0\Rightarrow T(x)=0\end{gather}\\\\
Tomemos ahora $y\in E$\,\, con $y\neq0.$\\\\
$\begin{Vmatrix}\dfrac{y}{\|y\|}\end{Vmatrix}=\dfrac{1}{\|y\|}\|y\|=1.$ Luego por ~\eqref{a}, $\begin{Vmatrix}T\Biggl(\dfrac{y}{\|y\|}\Biggr)\end{Vmatrix}=0$\\\\
O sea que $\dfrac{1}{\|y\|}\begin{Vmatrix}T(y)\end{Vmatrix}=0\Rightarrow\|T(y)\|=0$ i.e, $T(y)=0.$\\\\
Luego $\forall y\in E$ con $y\neq0, T(y)=0.$\\\\
Hemos demostrado as\'{i} que si $\|T\|=0,T$ es la aplicaci\'{o}n cero.\\\\
\item Sea $\alpha\in\mathbb{K}$ y $T\in\mathcal{L}_c(E,F).$ Entonces $(\alpha T)\in\mathcal{L}_c(E,F).$\\\\
Ahora, $\|\alpha T\|=\underset{\|x\|\leqslant1}{\sup\bigl\{\|T(x)\|\bigr\}}=\underset{\|x\|\leqslant1}{\sup\bigl\{\|\alpha T(x)\|\bigr\}}\\\\
=|\alpha|\underset{\|x\|\leqslant1}{\sup\bigl\{\|T(x)\|\bigr\}}=|\alpha|\|T\|.$\\\\
\item Sean $T_1,T_2\in\mathcal{L}_c(E,F).$\\\\
Entonces $T_1+T_2\in\mathcal{L}_c(E,F)$ y por tanto, \begin{gather}\label{b}\|T_1+T_2\|=\underset{\|x\|\leqslant1}{\sup\bigl\{\|\bigl(T_1+T_2\bigr)(x)\|\bigr\}}
=\underset{\|x\|\leqslant1}{\sup\bigl\{\|T_1(x)+T_2(x)\|\bigr\}}\end{gather}\\\\
Ahora, $\|T_1\|=\underset{\|x\|\leqslant1}{\sup\bigl\{\|T_1(x)\|\bigr\}}.$ Luego $\forall x\in E\,\,\text{con $\|x\|\leqslant1:\|T_1(x)\|\leqslant\|T_1\|$}$\\\\
Como $\|T_2\|=\underset{\|x\|\leqslant1}{\sup\bigl\{\|T_2(x)\|\bigr\}}.$ Luego $\forall x\in E\,\,\text{con $\|x\|\leqslant1:\|T_2(x)\|\leqslant\|T_2\|$}$\\\\
Asì que $\forall x\in E:\|T_1(x)+T_2(x)\|\leqslant\|T_1(x)\|+\|T_2(x)\|\leqslant\|T_1\|+\|T_2\|$\\\\
desigualdad que nos muestra que $\|T_1(x)\|+\|T_2(x)\|$ es cota superior del conjunto de lo n\'{u}meros reales $\bigl\{\|T_1(x)+T_2(x)\|,\|x\|\leqslant1\bigr\}$\\\\
Luego $\underset{\|x\|\leqslant1}{\sup\bigl\{\|T_1(x)+T_2(x)\|\bigr\}}\leqslant\|T_1(x)\|+\|T_2(x)\|$ y regresando a ~\eqref{b} se tiene que $\|T_1\|+\|T_2\|\leqslant\|T_1\|+\|T_2\|.$\\
\end{enumerate}
\end{proof}
\begin{prop}
Sean $E,F\in\text{Norm y $T\in\mathcal{L}_c(E,F).$}$\\\\
Entonces, $\forall x\in E:\|T(x)\|\leqslant\|T\|\|x\|.$\\
\end{prop}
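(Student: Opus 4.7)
\textbf{Plan de demostraci\'{o}n.} El plan es reducir el caso general al caso del supremo sobre la bola unitaria cerrada, mediante el truco de normalizaci\'{o}n $y=x/\|x\|$. La demostraci\'{o}n se separa de forma natural en dos casos seg\'{u}n si $x=0$ o $x\neq0$.

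Para $x=0$, por linealidad $T(0)=0_F$, luego $\|T(0)\|=0\leq\|T\|\cdot\|0\|$, y la desigualdad se cumple trivialmente (usando s\'{o}lo que $\|T\|\geq0$, hecho ya establecido en la Proposici\'{o}n anterior).

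Para $x\neq0$, la idea clave es considerar $y=x/\|x\|$. Este vector satisface $\|y\|=(1/\|x\|)\|x\|=1$, en particular $\|y\|\leq1$. Por la definici\'{o}n de $\|T\|$ como el supremo del conjunto $\{\|T(z)\|:\|z\|\leq1\}$, se tiene $\|T(y)\|\leq\|T\|$. Usando la linealidad de $T$ y la homogeneidad absoluta de la norma en $F$, resulta $\|T(y)\|=\|T(x)\|/\|x\|$, de donde se despeja $\|T(x)\|\leq\|T\|\|x\|$ multiplicando ambos lados por $\|x\|>0$.

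No anticipo obst\'{a}culo sustantivo. El \'{u}nico punto a recordar es que $\|T\|$ es efectivamente un n\'{u}mero real finito, lo cual est\'{a} garantizado por la discusi\'{o}n inmediatamente anterior a la Definici\'{o}n: la constante $M$ provista por la Proposici\'{o}n 1, inciso 3, muestra que $\{\|T(x)\|:\|x\|\leq1\}$ est\'{a} acotado superiormente por $M$, y por tanto posee supremo en $\mathbb{R}$. La desigualdad pedida es entonces simplemente la \emph{desnormalizaci\'{o}n} del control que el supremo da sobre la bola unitaria.
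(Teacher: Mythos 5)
Tu propuesta es correcta y sigue esencialmente el mismo camino que la prueba del texto: normalizar $y=x/\|x\|$ para $x\neq0$, aplicar la definici\'{o}n de $\|T\|$ como supremo sobre la bola unitaria cerrada y despejar. El \'{u}nico a\~{n}adido es que tratas expl\'{i}citamente el caso $x=0$, que el texto omite por trivial.
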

\begin{proof}
Sabemos que $\|T\|_{\mathcal{L}_c(E,F)}=\underset{\|x\|\leqslant1}{\sup\bigl\{\|T(x)\|\bigr\}}$\\\\
Luego, \begin{gather}\label{c}\forall x\in E\,\,\text{con $\|x\|_E\leqslant1:\|T(x)\|\leqslant\|T\|$}\end{gather}\\\\
Ahora, $\forall x\in E\,\,\text{con $x\neq0, \begin{Vmatrix}\dfrac{x}{\|x\|}\end{Vmatrix}=1$}$ y al tener en cuenta ~\eqref{c}, $\begin{Vmatrix}T\Biggl(\dfrac{x}{\|x\|}\Biggr)\end{Vmatrix}\leqslant\|T\|$ \\o sea que $\|T(x)\|\leqslant\|T\|\|x\|.$\\
\end{proof}
\begin{ejer}
Sea $E\in\text{Norm}.$\\\\
Demostrar que si una S. de Cauchy en $E=\bigl\{x_n\bigr\}_{n=1}^{\infty}$ tiene una subsucesi\'{o}n convergente a x, la sucesi\'{o}n $x_n\longrightarrow x.$\\
\end{ejer}
\begin{sol}
Sea $\bigl\{x_n\bigr\}_{n=1}^{\infty}\subset E\in\text{Norm}.$ y supongamos que $\bigl\{x_{n_1},x_{n_2},\ldots,x_{n_j},\ldots\bigr\}\subset\bigl\{x_n\bigr\}_{n=1}^{\infty}$ tal que $\lim\limits_{n_j\rightarrow\infty}x_{n_j}=x.$ Veamos que $x_n\longrightarrow x.$\\\\
Sea $\epsilon>0.$ Entonces $\dfrac{\epsilon}{2}>0$ y con $\bigl\{x_n\bigr\}_{n=1}^{\infty}$ es una S. Cauchy en E, \begin{gather}\exists N_1\in\mathbb{N}\text{tal que $\forall m,n>N_1:\|x_n-x_m\|<\dfrac{\epsilon}{2}$}\end{gather}\\\\
Como $\bigl\{x_{n_1},x_{n_2},\ldots,x_{n_j},\ldots\bigr\}\longrightarrow x$ y $\dfrac{\epsilon}{2}>0,$\begin{gather}\label{d}\exists N_2\in\mathbb{N}\text{tal que $\forall n_j>N_2:\|x_{n_j}-x\|<\dfrac{\epsilon}{2}$}\end{gather}\\\\
Tomemos $n>m\'{a}x\bigl\{N_1,N_2\bigr\}.$\\\\
Entonces $\exists ñ_j>N_2\text{tal que $n<ñ_j$}$ y se tiene por ~\eqref{d} que $\|x_n-x_{ñ_j}\|<\dfrac{\epsilon}{2}$\\\\
Como $n,ñ_j>N_1,\|x_n-x_{ñ_j}\|<\dfrac{\epsilon}{2}$\\\\
Luego $\|x_n-x\|=\|\bigl(x_n-x_{ñ_j}\bigr)+\bigl(x_{ñ_j}-x\bigr)\|
\leqslant\|x_n-x_{ñ_j}\|+\|x_{ñ_j}-x\|<\dfrac{\epsilon}{2}+\dfrac{\epsilon}{2}=\epsilon$\\\\
Esto prueba que $\forall n>m\'{a}x\bigl\{N_1,N_2\bigr\}:\|x_n-x\|<\epsilon,$ i.e, $x_n\longrightarrow x.$\\
\end{sol}
\begin{ejer}
Sean $E,F\in\text{Norm}$ y $T\in\mathcal{L}_c(E,F).$ Hemos definido $\|T\|=\underset{\|x\|\leqslant1}{\sup\bigl\{\|T(x)\|\bigr\}}.$\\\\
Probar que $\|T\|=\underset{\|x\|=1}{\sup\bigl\{\|T(x)\|\bigr\}}.$\\
\end{ejer}
\begin{sol}
Es claro que $\bigl\{\|T(x)\|\diagup\|x\|=1\bigr\}\subset\bigl\{\|T(x)\|\diagup\|x\|\leqslant1\bigr\}.$\\\\
Como $\|T\|=\underset{\|x\|\leqslant1}{\sup\bigl\{\|T(x)\|\bigr\}}, \|T\|$ es cota superior del $2^{do}$ cjto.\\\\
Luego $\|T\|$ es cota superior del $1^{er}$ cjto y $\exists\,\, \underset{\|x\|=1}{\sup\bigl\{\|T(x)\|\bigr\}}$ teni\'{e}ndose que\\
$\underset{\|x\|=1}{\sup\bigl\{\|T(x)\|\bigr\}}\leqslant\underset{\|x\|\leqslant1}{\sup\bigl\{\|T(x)\|\bigr\}}=\underset{\|u\|\leqslant1}{\sup\Biggl(\|u\|\begin{Vmatrix}T\Biggl(\dfrac{u}{\|u\|}\Biggr)\end{Vmatrix}\Biggr)}\underset{\overset{\nearrow}{\star}}{\leqslant}\underset{\|x\|=1}{\sup\bigl\{\|T(x)\|\bigr\}}$\\\\
Justifiquemos $\star.$\\\\
Llamemos $S=\bigl\{\|u\|\begin{Vmatrix}T\Biggl(\dfrac{u}{\|u\|}\Biggr)\end{Vmatrix},\|u\|\leqslant1\bigr\}$\\\\
y $W=\bigl\{\|T(y)\|,\|y\|=1\bigr\}.$ Veamos que $\forall s\in S, \exists w\in W$ tal que $s<w$ con lo que se tendr\'{i}a que $\sup S\leqslant\sup W,$ i.e, se tendr\'{i}a $\star.$\\\\
Sea $s\in S.$ Entonces \begin{gather}\label{d}s=\|u\|\begin{Vmatrix}T\Biggl(\dfrac{u}{\|u\|}\Biggr)\end{Vmatrix}\underset{\overset{\nearrow}{\|u\|\leqslant1}}{\leqslant}\begin{Vmatrix}T\Biggl(\dfrac{u}{\|u\|}\Biggr)\end{Vmatrix}\end{gather}\\\\
Tomemos $w=\begin{Vmatrix}T\Biggl(\dfrac{u}{\|u\|}\Biggr)\end{Vmatrix}.$ Es claro que $w\in W $ y que $s\underset{\overset{\nearrow}{~\eqref{d}}}{\leqslant}\begin{Vmatrix}T\Biggl(\dfrac{u}{\|u\|}\Biggr)\end{Vmatrix}=w$\\
\end{sol}
\begin{obser}
Con ligeras variantes podemos demostrar que $\|T\|=\underset{\|x\|<1}{\sup\bigl\{\|T(x)\|\bigr\}}$\\
\end{obser}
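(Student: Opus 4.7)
El plan es establecer la igualdad $\|T\|=\sup_{\|x\|<1}\|T(x)\|$ probando las dos desigualdades usuales entre $\|T\|$ --- ya definido como $\sup_{\|x\|\leqslant 1}\|T(x)\|$ --- y el nuevo supremo sobre la bola abierta. La observaci\'{o}n inicial es que el conjunto $\bigl\{\|T(x)\|:\|x\|<1\bigr\}$ est\'{a} contenido en $\bigl\{\|T(x)\|:\|x\|\leqslant 1\bigr\}$, as\'{i} que autom\'{a}ticamente $\sup_{\|x\|<1}\|T(x)\|\leqslant\|T\|$. Para reforzar esta desigualdad y cerciorarme de que el supremo sobre la bola abierta est\'{a} acotado, invocar\'{i}a la proposici\'{o}n inmediatamente anterior, que asegura $\|T(x)\|\leqslant\|T\|\,\|x\|$; para $\|x\|<1$ esto da directamente $\|T(x)\|<\|T\|$.

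La direcci\'{o}n delicada, y la \'{u}nica verdadera dificultad, es $\|T\|\leqslant\sup_{\|x\|<1}\|T(x)\|$: los puntos de la esfera $\{\|x\|=1\}$, donde casi se alcanza el extremo que define $\|T\|$, quedan excluidos del nuevo conjunto. Aqu\'{i} aprovechar\'{i}a el ejercicio anterior, donde ya qued\'{o} demostrado que $\|T\|=\sup_{\|x\|=1}\|T(x)\|$, y la ``ligera variante'' consistir\'{a} en aproximar cada vector unitario por vectores dentro de la bola abierta. Concretamente, dado $y\in E$ con $\|y\|=1$, introducir\'{i}a la sucesi\'{o}n $y_n=(1-1/n)y$, que satisface $\|y_n\|=1-1/n<1$; por linealidad de $T$ y homogeneidad de la norma, $\|T(y_n)\|=(1-1/n)\|T(y)\|\to\|T(y)\|$. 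Como cada $\|T(y_n)\|\leqslant\sup_{\|x\|<1}\|T(x)\|$, el paso al l\'{i}mite entrega $\|T(y)\|\leqslant\sup_{\|x\|<1}\|T(x)\|$ para todo $y$ en la esfera.

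Tomando luego supremo sobre $\|y\|=1$ y aplicando el ejercicio anterior, se obtiene $\|T\|=\sup_{\|y\|=1}\|T(y)\|\leqslant\sup_{\|x\|<1}\|T(x)\|$, lo que cierra la igualdad. El ingrediente clave es la homogeneidad de $T$ junto con la de la norma en $F$: ella permite transferir la informaci\'{o}n de la esfera al interior de la bola preservando el orden en el paso al l\'{i}mite, sin necesidad de elaborar un argumento de continuidad distinto del que ya est\'{a} disponible.
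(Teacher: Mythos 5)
Tu argumento es correcto y coincide con la ``ligera variante'' que la observaci\'{o}n del texto deja indicada: la contenci\'{o}n de conjuntos da $\sup_{\|x\|<1}\|T(x)\|\leqslant\|T\|$, y el reescalado $y_n=(1-\tfrac{1}{n})y$ junto con la homogeneidad de $T$ y el ejercicio previo ($\|T\|=\sup_{\|x\|=1}\|T(x)\|$) da la desigualdad rec\'{i}proca; el paso al l\'{i}mite est\'{a} bien justificado porque cada $\|T(y_n)\|$ est\'{a} acotado por el supremo sobre la bola abierta. S\'{o}lo una minucia: la desigualdad estricta $\|T(x)\|<\|T\|$ que mencionas al final del primer p\'{a}rrafo requiere $\|T\|>0$ (si $T=0$ queda $0\leqslant 0$), pero ese comentario es prescindible y no afecta la prueba.
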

\section{Isometr\'{i}as entre E.L.N}
\begin{defin}
Sean $E,F\in Norm.$ y $T:E\longrightarrow F, T\in\mathcal{L}_c(E,F).$\\
\begin{enumerate}
\item T preserva la norma si $\forall x\in E:\|T(x)\|=\|x\|$\\
\item T preserva la distancia si $\forall x,y\in E:d\bigl(T(x),T(y)\bigr)=\|T(x)-T(y)\|=\|x-y\|=d\bigl(x,y\bigr).$\\
\end{enumerate}
\end{defin}
\begin{prop}
\begin{enumerate}
\item Si T preserva la norma, T preseva la distancia.\\
\item Si T preserva la distancia, T preserva la norma.\\
\end{enumerate}
\end{prop}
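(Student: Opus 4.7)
The plan is to exploit the linearity of $T$, which is built into the hypothesis $T\in\mathcal{L}_c(E,F)$; both implications then reduce to one-line identities.

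For part (1), I would assume $\|T(x)\|=\|x\|$ for every $x\in E$ and then, given $x,y\in E$, rewrite the distance between the images using linearity:
\[
d\bigl(T(x),T(y)\bigr)=\|T(x)-T(y)\|=\|T(x-y)\|,
\]
where the last equality uses that $T$ is linear, so $T(x)-T(y)=T(x-y)$. Applying the norm-preservation hypothesis to the vector $x-y\in E$ gives $\|T(x-y)\|=\|x-y\|=d(x,y)$, which is exactly distance preservation.

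For part (2), the trick is to evaluate the distance hypothesis at $y=0$. First I would note that, because $T$ is linear, $T(0)=0_F$. Then for any $x\in E$,
\[
\|T(x)\|=\|T(x)-0_F\|=\|T(x)-T(0)\|=d\bigl(T(x),T(0)\bigr)=d(x,0)=\|x-0\|=\|x\|,
\]
which is norm preservation.

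I do not anticipate any real obstacle: the only substantive content is that in both directions we need $T$ to be linear in order to pass from $T(x)-T(y)$ to $T(x-y)$ (in part (1)) and from $T(0)$ to $0_F$ (in part (2)). Continuity of $T$ plays no role; the proposition would hold for arbitrary linear maps. If I wanted to be thorough, I would remark at the end that in fact this shows the two conditions are equivalent for any $T\in\mathcal{L}(E,F)$, not just for continuous ones.
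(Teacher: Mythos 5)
Your proof is correct and follows the paper's argument exactly: part (1) uses linearity to write $\|T(x)-T(y)\|=\|T(x-y)\|$ and applies norm preservation to $x-y$, and part (2) evaluates the distance hypothesis at $T(0_E)=0_F$. Your closing remark that continuity is never used and the equivalence holds for any $T\in\mathcal{L}(E,F)$ is a nice observation, but the substance matches the paper.
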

\begin{proof}
\begin{enumerate}
\item $\forall x,y\in E: d\bigl(T(x),T(y)\bigr)=\|T(x)-T(y)\|=\|T(x-y)\|=\|x-y\|=d\bigl(x,y\bigr).$\\
\item Sea $x\in E.$ Veamos que $\|T(x)\|=\|x\|.\\\\
\|T(x)\|=\|T(x)-0_F\|=\|T(x)-T(0_E)\|=d\bigl(T(x),T(0_E)\bigl)\\\\
=d\bigl(x,0_E\bigr)=\|x-0_E\|=\|x\|.$\\
\end{enumerate}
\end{proof}
\begin{defin}
Sean $E,F\in Norm.$ Una funci\'{o}n $T:E\longrightarrow F$ tal que\\
\begin{enumerate}
\item $T\in\mathcal{L}(E,F).$\\
\item T preserva la norma (\'{o} la distancia) se llama una $\underline{\text{isometr\'{i}a}}$ entre E,F.\\
\end{enumerate}
\end{defin}
\begin{prop}
\begin{enumerate}
\item Toda isometr\'{i}a es 1-1.\\
\item La composici\'{o}n de isometr\'{i}as es una isometr\'{i}a.\\
\item Toda isometr\'{i}a es Unif. continua y por tanto, toda isometr\'{i}a es una funci\'{o}n continua.\\
\item Si T es una isometr\'{i}a entre E y F y T es sobre, $T^{-1}$ es tambi\'{e}n una isometr\'{i}a de F en E.\\
\end{enumerate}
\end{prop}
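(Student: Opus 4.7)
El plan es demostrar cada una de las cuatro afirmaciones por separado, apoy\'{a}ndome en la definici\'{o}n de isometr\'{i}a (A.L. que preserva la norma, o equivalentemente la distancia, por la Proposici\'{o}n anterior) y en la caracterizaci\'{o}n de continuidad para A.L. entre espacios normados dada al comienzo del cap\'{i}tulo.

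Para (1), partir\'{i}a de $T(x)=T(y)$ y usar\'{i}a que $T$ preserva la distancia: $0=\|T(x)-T(y)\|=\|x-y\|,$ de donde $x=y.$ Para (2), si $S:E\longrightarrow F$ y $T:F\longrightarrow G$ son isometr\'{i}as, la composici\'{o}n $T\circ S$ es A.L. por ser composici\'{o}n de A.L., y la identidad $\|(T\circ S)(x)\|=\|T(S(x))\|=\|S(x)\|=\|x\|$ muestra que preserva la norma. Para (3), observar\'{i}a que la igualdad $\|T(x)\|=\|x\|\leqslant 1\cdot\|x\|$ es precisamente la condici\'{o}n (3) de la Proposici\'{o}n de continuidad con $M=1,$ de modo que por la equivalencia all\'{i} probada, $T$ resulta uniformemente continua (y en particular continua).

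El punto que requiere m\'{a}s atenci\'{o}n es (4). Suponiendo $T$ isometr\'{i}a sobreyectiva, por (1) $T$ es inyectiva, luego biyectiva; existe entonces $T^{-1}:F\longrightarrow E.$ Primero habr\'{i}a que verificar que $T^{-1}$ es A.L., lo cual es un hecho cl\'{a}sico: dados $y_1,y_2\in F$ y escalares $\alpha,\beta\in\mathbb{K},$ tomo $x_i=T^{-1}(y_i),$ de modo que por la linealidad de $T,$ $T(\alpha x_1+\beta x_2)=\alpha T(x_1)+\beta T(x_2)=\alpha y_1+\beta y_2,$ y al aplicar $T^{-1}$ se obtiene $T^{-1}(\alpha y_1+\beta y_2)=\alpha T^{-1}(y_1)+\beta T^{-1}(y_2).$ Para la preservaci\'{o}n de la norma, dado $y\in F$ arbitrario, escribiendo $y=T(x)$ con $x=T^{-1}(y),$ se tiene $\|T^{-1}(y)\|=\|x\|=\|T(x)\|=\|y\|.$ La continuidad uniforme de $T^{-1}$ sigue entonces de (3) aplicada a $T^{-1}.$

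El principal obst\'{a}culo es la verificaci\'{o}n de la linealidad de $T^{-1}$ en (4), ya que es la \'{u}nica parte que no es una lectura directa de la definici\'{o}n; el resto consiste en c\'{a}lculos mec\'{a}nicos a partir de las definiciones y de los resultados previos del texto.
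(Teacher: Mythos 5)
Tu propuesta es correcta y sigue esencialmente el mismo camino que el texto: el punto (1) mediante la preservaci\'{o}n de la distancia (el texto usa $\ker T=\bigl\{0_E\bigr\}$ v\'{i}a la preservaci\'{o}n de la norma, lo cual es equivalente), los puntos (2) y (3) que el texto declara inmediatos los desarrollas con el argumento est\'{a}ndar esperado (en particular, la condici\'{o}n $\|T(x)\|\leqslant 1\cdot\|x\|$ con $M=1$), y en (4) llenas el detalle de la linealidad de $T^{-1}$ que el texto solo afirma, concluyendo la preservaci\'{o}n de la norma con el mismo c\'{a}lculo $\|T^{-1}(y)\|=\|x\|=\|T(x)\|=\|y\|.$ No hay ninguna laguna.
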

\begin{proof}
\begin{enumerate}
\item Sean $E,F\in Norm$ y $T:E\longrightarrow F,$ una A.L. tal que $\forall x\in E: \|T(x)\|=\|x\|$ i.e, T es una isometr\'{i}a entre E,F.\\\\
Veamos que $T$ es 1-1 ò que $KerT=\bigl\{0_E\bigr\}$\\\\
Sea $x\in KerT\Rightarrow T(x)=0_F\Rightarrow\|x\|=\|T(x)\|=\|0_F\|=0;$ i.e, $\|x\|=0\Rightarrow x=0_E.$\\
\item [2 y 3] son inmediatas.\\
\item[4] Sean $E,F\in Norm$ y $T:E\longrightarrow F,T\in\mathcal{L}(E,F)$ T: isometr\'{i}a (y por tanto 1-1 y sobre.)\\\\
Entonces \\$\begin{diagram}\node[2]{T^{-1}: F}\arrow{e,t}\\
\node{E}\end{diagram}$\\
$\begin{diagram}\node[2]{y}\arrow{e,t}\\
\node{T^{-1}(y)=x}\end{diagram}$\\\\
donde \begin{gather}\label{e}T(x)=y\end{gather} es A.L. i.e, ${T^{-1}}\in\mathcal{L}(F,E).$\\\\
Veamos que $\forall y\in F:\|T^{-1}(y)\|=\|y\|.$\\\\
Sea $y\in F.$ Entonces $\exists! x\in E$ tal que $T^{-1}(y)=x:\|T^{-1}(y)\|=\|x\|=\|T(x)\|\underset{\overset{\nearrow}{~\eqref{e}}}{=}\|y\|$\\
\end{enumerate}
\end{proof}
\begin{obser}
Si T es una isometr\'{i}a entre E,F; $T\in\mathcal{L}_c(E,F).$ Luego $\|T\|=\underset{\|x\|\leqslant1}{\sup\bigl\{\|T(x)\|\bigr\}}=\underset{\|x\|\leqslant1}{\sup\bigl\{\|x\|\bigr\}}=1$ lo que demuestra que $\underline{\text{la norma de toda isometr\'{i}a es 1.}}$\\
\end{obser}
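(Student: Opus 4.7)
El plan es combinar dos hechos ya establecidos en el texto: por un lado, la proposición previa garantiza que toda isometría es uniformemente continua, de donde $T\in\mathcal{L}_c(E,F)$ y la cantidad $\|T\|$ está bien definida; por otro, la propia definición de isometría proporciona $\|T(x)\|=\|x\|$ para todo $x\in E$. Estos dos ingredientes bastan para reducir el cálculo de $\|T\|$ a un supremo de normas en $E$.

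A partir de ahí sustituyo directamente en la fórmula
\[
\|T\|=\underset{\|x\|\leqslant 1}{\sup}\bigl\{\|T(x)\|\bigr\},
\]
reemplazando $\|T(x)\|$ por $\|x\|$, con lo que el cálculo queda reducido a determinar $\underset{\|x\|\leqslant 1}{\sup}\bigl\{\|x\|\bigr\}$. Este supremo vale $1$: claramente $1$ es cota superior trivial del conjunto, y se alcanza tomando cualquier $y\in E$ con $y\neq 0$ y formando $y/\|y\|$, que es un vector de norma exactamente $1$. Concluyo entonces que $\|T\|=1$.

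El único punto que merece una pequeña advertencia, y que identifico como el (modesto) obstáculo, es la hipótesis implícita $E\neq\{0_E\}$: sin ella la única aplicación lineal es la nula, se tendría $\|T\|=0$, y la afirmación carecería de sentido. Fuera de ese detalle la prueba es esencialmente una manipulación inmediata del supremo apoyada en la preservación de la norma y no involucra estimación fina alguna.
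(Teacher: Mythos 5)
Tu propuesta es correcta y sigue esencialmente el mismo camino que la observación del texto: continuidad de la isometría (vía la proposición previa), sustitución de $\|T(x)\|$ por $\|x\|$ en el supremo, y el cálculo $\sup_{\|x\|\leqslant 1}\|x\|=1$ usando un vector normalizado $y/\|y\|$. Tu advertencia sobre el caso $E=\{0_E\}$ es un detalle pertinente que el texto omite, pero no cambia la naturaleza del argumento.
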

\begin{defin}[$\textbf{\text{Isomorfismo isom\'{e}trico entre Espacios Normados}}$]
Sean $E,F\in Norm$ y $T:E\longrightarrow F,$ una A.L. biyectiva. Si tanto T como $T^{-1}$ son continuas diremos que T es un isomorfismo (topol\'{o}gico) entre E y F. Y si adem\'{a}s $\forall x\in E: \|T(x)\|=\|x\|$ se dir\'{a} que T es un isomorfismo isom\'{e}trico entre E y F.\\
\end{defin}
La siguiente Prop. da un criterio para establecer cuando una A.L. biyectiva entre E.N. es un Iso. Topol\'{o}gico.\\
\begin{prop}
Sea $E\in Norm, dim E=n.$ Entonces $E\cong\mathbb{K}^n.$\\
\end{prop}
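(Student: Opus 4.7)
The plan is to exhibit an explicit topological isomorphism $T:\mathbb{K}^n\longrightarrow E$. First I would fix a basis $\{e_1,\ldots,e_n\}$ of $E$ and define the coordinate map
\[T(\alpha_1,\ldots,\alpha_n)=\sum_{i=1}^{n}\alpha_i e_i.\]
By the defining property of a basis, $T$ is a linear bijection, so both $T$ and its set-theoretic inverse $T^{-1}:E\longrightarrow\mathbb{K}^n$ are well-defined elements of $\mathcal{L}(\mathbb{K}^n,E)$ and $\mathcal{L}(E,\mathbb{K}^n)$ respectively. Equipping $\mathbb{K}^n$ with any norm (say the Euclidean or the max norm) makes both spaces ELN, and the statement $E\cong\mathbb{K}^n$ asks precisely that $T$ be a topological isomorphism in the sense of the previous definition.

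The next and essentially only step is to invoke the earlier proposition which states that every linear map from an $n$-dimensional normed space into any normed space is automatically continuous. Applied to $T$, whose domain $\mathbb{K}^n$ has dimension $n$, it gives $T\in\mathcal{L}_c(\mathbb{K}^n,E)$. Applied to $T^{-1}$, whose domain $E$ has dimension $n$ by hypothesis, it gives $T^{-1}\in\mathcal{L}_c(E,\mathbb{K}^n)$. Combining these two facts with the bijectivity established above yields that $T$ is an isomorphism topológico, i.e. $E\cong\mathbb{K}^n$.

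The main obstacle is effectively absent: the hard analytical work (the existence of the bound $\|T(x)\|\leqslant M\|x\|$ in finite dimensions, which rests on equivalence of norms on $\mathbb{K}^n$) has already been discharged inside the previous proposition. The only mild point of care is that the result is formulated without specifying a norm on $\mathbb{K}^n$, but this is harmless because any two norms on a finite-dimensional space are equivalent, so the identity between any two such choices is itself a topological isomorphism and does not affect the conclusion. In short, the proof collapses to writing down the coordinate map associated with a chosen basis and citing the previous proposition once in each direction.
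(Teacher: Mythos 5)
Your argument is correct in its conclusion and begins exactly as the paper does (the coordinate map $T(x_1,\ldots,x_n)=x_1e_1+\cdots+x_ne_n$ associated with a fixed basis), but from there it takes a genuinely different route. You dispose of the continuity of both $T$ and $T^{-1}$ by citing the earlier proposition that every linear map defined on a finite-dimensional normed space is automatically continuous, applied once with domain $\mathbb{K}^n$ and once with domain $E$. The paper instead does the analytical work by hand: for $T$ it equips $\mathbb{K}^n$ with the norm $\|x\|=|x_1|+\cdots+|x_n|$ and derives the explicit bound $\|T(x)\|\leqslant nK\|x\|$ with $K=\max_i\|e_i\|$; for $T^{-1}$ it uses a compactness argument, namely that $x\mapsto\|T(x)\|_E$ is continuous on the unit sphere $S(0,1)\subset\mathbb{K}^n$, which is compact by Heine--Borel, hence attains a strictly positive minimum $\|T(a)\|$ (positive because $T$ is injective), yielding $\|T(a)\|\,\|x\|\leqslant\|T(x)\|$ for all $x$. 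Your version is shorter, but you should be alert to a circularity risk: in this set of notes the automatic-continuity proposition you invoke is itself proved by appealing to the equivalence of all norms on a finite-dimensional space, a fact only established later (by an independent induction), and in many standard treatments that equivalence is in turn deduced from the very isomorphism $E\cong\mathbb{K}^n$ you are proving. The paper's self-contained compactness argument is precisely what breaks that circle, which is presumably why the author did not take your shortcut; if you use it, you should at least verify that the chain of dependencies behind the cited proposition does not pass back through the present result.
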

\begin{proof}
Sea $\bigl\{e_1,e_2,\ldots,e_n\bigr\}:$ base de E, y consideremos la A.L.\\
$\begin{diagram}\node[1]{T:\mathbb{K}^n}\arrow{e,t}\\
\node{E}\end{diagram}$\\
$\begin{diagram}\node[1]{(x_1,\ldots,x_n)=x}\arrow{e,t}\\
\node{T(x)=T(x_1,\ldots,x_n)=x_1e_1+\ldots+x_ne_n}\end{diagram}$\\\\
Entonces:\\
\begin{enumerate}
\item T es A.L. Biyectiva y $T^{-1}$ tambi\'{e}n es A.L.\\
\item Veamos que T es continua. Bastar\'{a} con demostrar que $\exists M>0$ tal que $\forall x\in\mathbb{K}^n:\|T(x)\|\leqslant M\|x\|.$\\\\
Tomemos $x=(x_1,\ldots,x_n)\in\mathbb{K}^n$ y consideremos en $\mathbb{K}^n$ la norma $\|x\|=|x_1|+\ldots+|x_n|.$\\\\
Como \begin{equation}\begin{split}\label{f}|x_1|\leqslant\|x_1\|\\
\vdots\\
\underline{|x_n|\leqslant\|x_n\|}\\
\sum\limits_{i=1}^n|x_i|\leqslant n\|x\|\end{split}\end{equation}\\\\
Sea $K=màx\bigl\{\|e_1\|,\ldots,\|e_n\|\bigr\}.$ Entonces $\|T(x)\|=\|T(x_1,\ldots,x_n)\|\\\\=\|x_1e_1+\ldots+x_ne_n\|\\\\
\leqslant|x_1|\|e_1\|+\ldots+|x_n|\|e_n\|\\\\
\leqslant\bigl(|x_1|+\ldots+|x_n|\bigl)K\\\\
=\bigl(\sum\limits_{i=1}^n|x_i|\bigr)K\underset{\overset{\nearrow}{~\eqref{f}}}{\leqslant} nK\|x\|$\\\\
As\'{i} que $\exists M=nK>0$ tal que: $K=m\'{a}x\bigl\{\|e_1\|,\ldots,\|e_n\|\bigr\}.$\\\\
$\forall x\in\mathbb{K}^n=\|T(x)\|\leqslant(nK)\|x\|$ lo que demuestra que $T\in\mathcal{L}_c(\mathbb{K}^n,E).$\\\\
\item Veamos ahora que $T^{-1}:E\longrightarrow\mathbb{K}^n$ es continua.\\\\
Bastar\'{a} con demostrar que $\exists m>0$ tal que $\forall x\in\mathbb{K}^n:m\|x\|\leqslant\|T(x)\|.$\\\\
Consideremos la esfera unidad en $\mathbb{K}^n$ de centro 0 y radio 1:\\\\
$\underset{\text{cerrado y acotado }}{\underbrace{S(0,1)}}=\bigl\{x\in\mathbb{K}^n\diagup\|x\|=1\bigr\}\subset\mathbb{K}^n.$\\\\
Luego por el Teorema de Heine-Borel- Lebesgue, $S(0,1)$ es compacto.\\\\
Consideremos a su vez la funci\'{o}n compuesta definida en el diagrama siguiente:\\\\\\
$\begin{diagram}
\node[2]{\mathbb{K}^n\supset S(0,1)}\arrow{e,t}{\underset{\text{continua}}{\underbrace{T\diagup S(0,1)}}}\arrow{se,b}{\underset{\text{continua}}{\underbrace{\|\|_E\circ T\diagup S(0,1)}}}\node{E}\arrow{s,r}{\|\|_E\text{continua}}\\
\node[3]{\mathbb{R}}
\end{diagram}$\\\\\\
$\|\|_E\circ T\diagup S(0,1):\\\begin{diagram}\node[1]{S(0,1)\subset\mathbb{K}^n}\arrow{e,t}\\
\node{\mathbb{R}}\end{diagram}$\\
$\begin{diagram}\node[1]{x}\arrow{e,t}\\
\node{\bigl(\|\|_E\circ T\diagup S(0,1)\bigr)(x)=\|T(x)\|_E}\end{diagram}$\\\\
Como $S(0,1)$ es compacto, la funci\'{o}n $\|\|_E\circ T\diagup S(0,1)$ alcanza un valor m\'{i}nimo absoluto en $S(0,1).$\\\\
As\'{i} que $\exists a\in S(0,1)$ tal que \begin{gather}\label{f}\forall x\in S(0,1):\|T(a)\|\leqslant\|T(x)\|\end{gather}\\\\
$a\in S(0,1)\Rightarrow T(a)\neq0\,\,\text{(ya que T es 1-1)$\Rightarrow \|T(a)\|>0$}$\\\\
Tomemos $x\in\mathbb{K}^n, x\neq0.$\\\\
Entonces $\dfrac{x}{\|x\|}\in S(0,1)$ y por tanto, teni\'{e}ndose en cuenta ~\eqref{f}:$$0<\|T(a)\|\leqslant\|T\bigl(\dfrac{x}{\|x\|}\|\bigr)\|x\|=\dfrac{1}{\|x\|}\|T(x)\|$$\\\\
O sea que $\forall x\in\mathbb{K}^n:\|T(a)\|\|x\|\leqslant\|T(x)\|.$\\\\
Esto demuestra que $\exists m=\|T(a)\|>0$ tal que $\forall x\in\mathbb{K}^n:m\|x\|\leqslant\|T(x)\|$ y por la Prop. anterior, $T^{-1}:E\longrightarrow\mathbb{K}^n$ es continua.\\\\
T es asì un $\mathbb{K}^n\cong E$.\\
\end{enumerate}
\end{proof}
Antes de continuar conviene tener presente ciertas propiedades topol\'{o}gicas de los E.N.\\\\
Sea $E\in Norm.$ Entonces $\bigl(E,d\bigr)$ es un Esp. m\'{e}trico, donde $d(x,y)=\|x-y\|.$ A su vez la distancia $d$ induce una topolog\'{i}a sobre E en la que la familia de vecindades $\mathcal{N}_x$ de un punto $x\in E$ se define as\'{i}:\\
$E\supset N\in\mathcal{N}_x\Leftrightarrow \exists\epsilon>0$ tal que $B(0,\epsilon)\subseteq N.$\\\\
O sea que un conjunto $N\subset E$ es vecindad del punto $x\in E$ si el conjunto contiene una bola de centro en el punto; $B(x,\epsilon)=\bigl\{u\in E\diagup d(u,x)=\|u-x\|<\epsilon\bigr\}.$\\\\
Si consideramos a $E$ dotado de la topologìa inducida por la norma, se tienen los siguientes resultados:\\
\begin{prop}
Sea $E\in Norm$ y $A\subset E.$\\
Entonces:\begin{enumerate}\item $\overline{A}=\mathcal{\text{cl}}(A)=\bigl\{x\in E\diagup\forall n\in\mathbb{N}:B(x,\frac{1}{n})\bigcap A\neq\emptyset\bigr\}$\\
\item ${A}^{\circ}=\mathcal{\text{int}}(A)=\bigl\{x\in E\diagup\exists n\in\mathbb{N}:B\bigl(x,\frac{1}{x}\bigr)\subset A\bigr\}$\\
\end{enumerate}
\end{prop}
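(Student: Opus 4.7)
La idea central, común a ambas partes, es notar que en $E$ la familia $\{B(x,1/n)\}_{n\in\mathbb{N}}$ constituye una base local de vecindades en el punto $x$. Esto se sigue de la propiedad arquimediana de $\mathbb{R}$: dado cualquier $\epsilon>0$ existe $n\in\mathbb{N}$ con $1/n<\epsilon$, y por tanto $B(x,1/n)\subseteq B(x,\epsilon)$. Con esta observación como herramienta, las dos igualdades se obtendrán directamente de las caracterizaciones usuales de clausura e interior en un espacio métrico.

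Para (1), primero invocaría la caracterización $x\in\overline{A}\Leftrightarrow$ toda vecindad de $x$ corta a $A$, que en $E$ equivale a que $B(x,\epsilon)\cap A\neq\emptyset$ para todo $\epsilon>0$. A partir de aquí demostraría las dos inclusiones: si $x\in\overline{A}$, tomando $\epsilon=1/n$ con $n\in\mathbb{N}$ arbitrario se obtiene $B(x,1/n)\cap A\neq\emptyset$; recíprocamente, dada la hipótesis de que $B(x,1/n)\cap A\neq\emptyset$ para todo $n$ y un $\epsilon>0$ cualquiera, elijo $n\in\mathbb{N}$ con $1/n<\epsilon$, y entonces $B(x,1/n)\subseteq B(x,\epsilon)$ fuerza $B(x,\epsilon)\cap A\neq\emptyset$, lo cual sitúa a $x$ en $\overline{A}$.

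Para (2), procedería de manera análoga. Usaría la definición $x\in A^{\circ}\Leftrightarrow$ existe $\epsilon>0$ con $B(x,\epsilon)\subseteq A$. La implicación no trivial es: si $x\in A^{\circ}$, escojo $\epsilon>0$ con $B(x,\epsilon)\subseteq A$ y luego $n\in\mathbb{N}$ con $1/n<\epsilon$, de donde $B(x,1/n)\subseteq B(x,\epsilon)\subseteq A$. La inclusión recíproca es inmediata: si para algún $n$ se tiene $B(x,1/n)\subseteq A$, como $1/n>0$ la bola $B(x,1/n)$ es una vecindad de $x$ contenida en $A$, y por tanto $x\in A^{\circ}$.

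El obstáculo real es mínimo: toda la demostración descansa sobre la propiedad arquimediana, de modo que el cuidado se reduce a enunciar con precisión la correspondencia entre cuantificadores del tipo ``$\exists\,\epsilon>0$'' y ``$\exists\,n\in\mathbb{N}$''. Conviene advertir, por último, una errata tipográfica en el enunciado del inciso (2): donde aparece $B(x,1/x)$ debe leerse $B(x,1/n)$, que es la expresión a la que se refiere la prueba.
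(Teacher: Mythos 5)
Tu propuesta es correcta y sigue esencialmente el mismo camino que el texto: en la parte (1) la implicación directa usa que cada $B(x,\frac{1}{n})$ es vecindad de $x$, y la rec\'{i}proca descansa en la propiedad arquimediana para encajar $B(x,\frac{1}{n})\subset B(x,\epsilon)\subset N$, exactamente como hace el autor. Adem\'{a}s aportas la prueba de la parte (2), que el texto omite, y señalas acertadamente la errata $B(x,\frac{1}{x})$ por $B(x,\frac{1}{n})$.
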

De esta manera, $$x\in\overline{A}\Leftrightarrow\forall n\in\mathbb{N}:B(x,\frac{1}{x})\bigcap A\neq0$$\\
$$x\in{A}^{\circ}\Leftrightarrow\exists n\in\mathbb{N}:B(x,\frac{1}{x})\subset A$$\\
\begin{proof}
\begin{enumerate}
\item Sea $x\in\mathcal{\text{cl}}(A)=\overline{A}.$\\
Entonces \begin{gather}\label{g}\forall N\in\mathcal{N}_x:N\cap A\neq\emptyset\end{gather}\\
Recordemos que $E\supset N\in\mathcal{N}_x\Leftrightarrow\exists B(x,\epsilon)\subseteq N.$\\
Asì que la familia $\bigl\{B(0,\epsilon),\epsilon>0\bigr\}\subset\mathcal{N}_x.$\\
Luego al tener en cuenta ~\eqref{g}, se tiene que $$\forall n\in\mathbb{N}:B\bigl(x,\frac{1}{n}\bigr)\bigcap A\neq\emptyset.$$\\\\
Sea ahora $x\in E$ con la siguiente propiedad:\\
$\forall n\in\mathbb{N}:B\bigl(x,\frac{1}{n}\bigr)\bigcap A\neq\emptyset.$\\\\
Veamos que: $x\in\mathcal{\text{cl}}(A)$ o que $\forall N\in\mathcal{N}_x:N\cap A\neq\emptyset.$\\\\
Tomemos $\underset{fijo}{\underbrace{N}}\in\mathcal{N}_x$\\\\
$\exists G:\text{abierto, tal que $x\in G\subset N$}\Rightarrow\exists\epsilon>0$ tal que $B(x,\epsilon)\subset G\subset N.$\\\\
Pero si $\epsilon>0,\exists n\in\mathbb{N}$ tal que $\frac{1}{n}<\epsilon\Rightarrow B(x,\frac{1}{n})\subset B(x,\epsilon)\subset N.$\\\\
Ahora, por hipòtesis, $B(x,\frac{1}{n})\bigcap A\neq0.$ i.e, A encuentra a la $B(x,\frac{1}{n})\subset N.$\\\\
Luego A encuentra a N; i.e, $N\cap A\neq\emptyset,$ cualquiera sea $N\in\mathcal{N}_x.$\\\\
\item Se omite.\\
\end{enumerate}
\end{proof}
\begin{prop}
Sea $E\in Norm,$ $A\subset E$ y $x\in E.$\\
Entonces $x\in\overline{A}\Longleftrightarrow\exists\bigl\{x_n\bigr\}_{n=1}^\infty\subset A$ tal que $\lim\limits_{n\rightarrow\infty}x_n=x.$\\
\end{prop}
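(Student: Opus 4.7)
El plan es usar como herramienta principal la caracterización de la clausura demostrada en la proposición inmediatamente anterior, a saber,
\[
x\in\overline{A}\Longleftrightarrow \forall n\in\mathbb{N}:\,B\!\left(x,\tfrac{1}{n}\right)\cap A\neq\emptyset,
\]
y luego traducir estas intersecciones no vacías en la existencia (respectivamente, convergencia) de una sucesión en $A$.

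Para la implicación $(\Rightarrow)$ supondría que $x\in\overline{A}$. Por la proposición anterior, para cada $n\in\mathbb{N}$ se tiene $B(x,\tfrac{1}{n})\cap A\neq\emptyset$, lo que permite elegir $x_n\in B(x,\tfrac{1}{n})\cap A$. De este modo se construye una sucesión $\{x_n\}_{n=1}^{\infty}\subset A$ que satisface $\|x_n-x\|<\tfrac{1}{n}$ para todo $n$. Dado $\epsilon>0$, basta tomar $N\in\mathbb{N}$ con $\tfrac{1}{N}<\epsilon$ (propiedad arquimediana) para concluir que $\|x_n-x\|<\epsilon$ para todo $n\geqslant N$, lo que prueba que $x_n\longrightarrow x$.

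Para la implicación $(\Leftarrow)$ supondría que existe $\{x_n\}_{n=1}^{\infty}\subset A$ con $\lim_{n\to\infty} x_n=x$, y usaría nuevamente la proposición anterior: bastará verificar que $B(x,\tfrac{1}{n})\cap A\neq\emptyset$ para cada $n\in\mathbb{N}$. Fijado $n$, como $\tfrac{1}{n}>0$ y $x_k\longrightarrow x$, existe $N\in\mathbb{N}$ tal que $\|x_k-x\|<\tfrac{1}{n}$ para todo $k\geqslant N$; en particular $x_N\in B(x,\tfrac{1}{n})$, y como $x_N\in A$, la intersección es no vacía. Aplicando la proposición anterior se obtiene $x\in\overline{A}$.

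Ningún paso presenta dificultad real: el único detalle conceptual a remarcar es el uso (implícito, como es costumbre en análisis) del axioma de elección numerable en la construcción de $\{x_n\}$ en la dirección $(\Rightarrow)$. El resto se reduce a manipular la definición de convergencia en norma y a invocar la caracterización ya demostrada de la clausura mediante bolas $B(x,\tfrac{1}{n})$.
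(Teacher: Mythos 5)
Tu propuesta es correcta y sigue esencialmente el mismo camino que el texto: ambas direcciones se reducen a la caracterización $x\in\overline{A}\Leftrightarrow\forall n:B(x,\tfrac{1}{n})\cap A\neq\emptyset$ de la proposición anterior, se construye la sucesión eligiendo $x_n\in B(x,\tfrac{1}{n})\cap A$ y se concluye la convergencia con la propiedad arquimediana. No hay diferencias de fondo con la demostración del texto.
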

\begin{proof}
$"\Longrightarrow"$ Sea $x\in\overline{A}.$\\\\
Entonces, por la Prop. anterior, $\forall n\in\mathbb{N}:B(x,\frac{1}{n})\bigcap A\neq\emptyset.$ O sea que $\forall n\in\mathbb{N}:\exists x_n\in A$ tal que $x_n\in B(x,\frac{1}{n}).$\\\\
Esto a su vez quiere decir que $\exists\bigl\{x_n\bigr\}_{n=1}^\infty\subset A$ tal que\\\\
$x_1\in B(x,1)\\\\
x_2\in B(x,\frac{1}{2})\\\\
x_3\in B(x,\frac{1}{3})\\\\
\ldots\ldots\\$\\\\
Es claro que fijado $ñ\in\mathbb{N},\forall n>ñ:x_n\in B(x,\frac{1}{ñ}).$\\\\
Resta demostrar que $x_n\longrightarrow x.$\\\\
Sea $\epsilon>0.$ Entonces $ñ\in\mathbb{N}$ tal que $\frac{1}{ñ}<\epsilon\Longrightarrow B(x,\frac{1}{ñ})\subset B(x,\epsilon).$\\\\
Pero $\forall n>ñ:x_n\in B(x,\frac{1}{ñ})\subset B(x,\epsilon)$, i.e, $\forall n>ñ:x_n\in B(x,\epsilon)$, \\lo que demuestra que $x_n\longrightarrow x.$\\\\
$"\Longleftarrow"$ Supongamos ahora que $x\in E$ tiene esta propiedad: \begin{gather}\label{h}\exists\bigl\{x_n\bigr\}_{n=1}^\infty\subset A\,\,\text{tal que $\lim\limits_{n\rightarrow\infty}x_n=x$}\end{gather}\\\\
Veamos $x\in\overline{A}.$\\\\
 Por la Prop. anterior bastarà con dm. que $\forall n\in\mathbb{N}:B\bigl(x,\frac{1}{n}\bigr)\bigcap A\neq\emptyset.$\\\\
Tomemos $n\in\mathbb{N}.$ Entonces $\frac{1}{n}>0$ y como $x_n\longrightarrow x, \exists N\in\mathbb{N}$ tal que $\forall p>N:x_p\in B(x,\frac{1}{n}).$\\\\
Pero por ~\eqref{h}, $x_p\in A,\forall p>N.$\\\\
Asì que $\exists N\in\mathbb{N}$ tal que $p>N:x_p\in B(x,\frac{1}{n})\bigcap A.$\\\\
Esto demuestra que $\forall n\in\mathbb{N}:B(x,\frac{1}{n})\bigcap A\neq\emptyset.$\\
\end{proof}
Recordemos la definiciòn de sucesiòn convergente en un E. Normado.\\\\
Sea $E\in Norm, \bigl\{x_n\bigr\}_{n=1}^\infty$ sucesiòn en E y $x\in E.$ $x=\lim\limits_{n\rightarrow\infty}x_n\Longleftrightarrow\forall\epsilon>0\\\exists N\in\mathbb{N}\text{tal que $\forall n>N,\|x_n-x\|<\epsilon$}.$\\\\
La prueba de la unicidad del lìmite es trivial.\\\\
Los siguientes hechos se establecen tambièn de manera trivial.\\\\
Sea $E\in Norm,\forall x,y\in E:\left|\|x\|-\|y\|\right|\leqslant\|x\pm y\|\leqslant\|x\|+\|y\|.$\\\\
Si $x_n\longrightarrow x,\alpha x_n\longrightarrow \alpha x.$\\\\
Si $x_n\longrightarrow x, y_n\longrightarrow y, x_n+y_n\longrightarrow x+y.$\\\\
\begin{prop}
La clausura de todo subespacio de un E.L.N. es un subespacio.\\
\end{prop}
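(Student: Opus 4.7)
Mi estrategia es apoyarme enteramente en la caracterizaci\'{o}n secuencial de la clausura dada en la Prop. anterior, junto con la continuidad de la suma y de la multiplicaci\'{o}n por escalar en $E$, enunciada justo antes de la presente proposici\'{o}n. Sea $M\subset E$ un subespacio; quiero probar que $\overline{M}$ tambi\'{e}n lo es, es decir, que $0_E\in\overline{M}$ y que $\overline{M}$ es cerrado bajo la suma y bajo el producto por escalares.

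Empezar\'{i}a observando que $0_E\in M\subset\overline{M}$, de modo que $\overline{M}\neq\emptyset$. Para la cerradura bajo la suma, tomar\'{i}a $x,y\in\overline{M}$ arbitrarios y, por la Prop. anterior, obtendr\'{i}a sucesiones $\{x_n\},\{y_n\}\subset M$ con $x_n\longrightarrow x$ y $y_n\longrightarrow y$. Como $M$ es subespacio, $x_n+y_n\in M$ para todo $n$; por la continuidad de la suma (si $x_n\to x,\,y_n\to y,$ entonces $x_n+y_n\to x+y$), se tiene $x_n+y_n\longrightarrow x+y$, y aplicando nuevamente la caracterizaci\'{o}n secuencial en sentido rec\'{i}proco concluir\'{i}a $x+y\in\overline{M}$. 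El caso del producto por escalar es an\'{a}logo: dados $\alpha\in\mathbb{K}$ y $x\in\overline{M}$, con $\{x_n\}\subset M$ tal que $x_n\longrightarrow x$, se tiene $\alpha x_n\in M$ y $\alpha x_n\longrightarrow\alpha x$, luego $\alpha x\in\overline{M}$.

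No anticipo ning\'{u}n obst\'{a}culo real: todo el argumento se reduce a combinar la caracterizaci\'{o}n $x\in\overline{A}\Longleftrightarrow\exists\{x_n\}\subset A$ con $x_n\longrightarrow x$ y la continuidad de las operaciones algebraicas. El \'{u}nico punto conceptual que vale la pena subrayar es que la clausura hereda la estructura de subespacio precisamente porque la suma y el producto por escalar son continuos respecto de la topolog\'{i}a inducida por la norma; de hecho, la misma t\'{e}cnica aplicada a cualquier operaci\'{o}n continua sobre $E$ muestra que $\overline{M}$ es cerrado bajo dicha operaci\'{o}n si $M$ lo es.
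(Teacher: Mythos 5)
Tu propuesta es correcta y sigue esencialmente el mismo camino que la prueba del texto: se apoya en la caracterizaci\'{o}n secuencial de la clausura de la proposici\'{o}n anterior y en la continuidad de la suma y del producto por escalar para concluir que $x+y\in\overline{M}$ y $\alpha x\in\overline{M}$. La \'{u}nica diferencia es cosm\'{e}tica (t\'{u} se\~{n}alas expl\'{i}citamente que $0_E\in M$ para la no vacuidad, mientras el texto s\'{o}lo usa $S\neq\emptyset$ y $S\subset\overline{S}$), de modo que no hay nada que objetar.
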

\begin{proof}
Sea $E\in Norm$ y $E\supset S:$ subespacio de E. Veamos que $\overline{S}$ es un subespacio de E.\\\\
\begin{enumerate}
\item Como $E\supset S$ y S: subespacio de E, $S\neq\emptyset.$ Ahora $S\subset\overline{S}.$ Luego $\overline{S}\neq\emptyset.$\\\\
\item Sean $x,y\in\overline{S}.$ Veamos que $(x+y)\in\overline{S}.$\\\\
Batarà con demostrar por la Prop. anterior, que $\exists\bigl\{u_n\bigr\}_{n=1}^\infty\subset S.$ tal que $\lim\limits_{n\rightarrow\infty}u_n=x+y.$\\\\
Como $x\in\overline{S}, \exists\bigl\{x_n\bigr\}_{n=1}^\infty\subset S$ tal que $\lim\limits_{n\rightarrow\infty}x_n=x$\\\\
Como $y\in\overline{S},\exists\bigl\{y_n\bigr\}\subset S$ tal que $\lim\limits_{n\rightarrow\infty}y_n=y.$\\\\
Como S es subespacio de E, $\bigl\{u_n\bigr\}=\bigl\{x_n+y_n\bigr\}_{n=1}^\infty\subset S.$\\\\
Luego $\bigl\{u_n\bigr\}_{n=1}^\infty\subset$ y ademàs, $\lim\limits_{n\rightarrow\infty}u_n=\lim\limits_{n\rightarrow\infty}(x_n+y_n)=\lim\limits_{n\rightarrow\infty}x_n+\lim\limits_{n\rightarrow\infty}y_n=x+y$\\\\
\item Sea $x\in\overline{S}$ y $\alpha\in\mathbb{K}.$ Veamos que $(\alpha x)\in\overline{S}.$\\\\
Como $x\in\overline{S},\exists\bigl\{x_n\bigr\}_{n=1}^\infty\subset S$ tal que $\lim\limits_{n\rightarrow\infty}x_n=x\Longrightarrow\bigl\{\alpha x\bigr\}_{n=1}^\infty\subset$ y ademàs, $\lim\limits_{n\rightarrow\infty}(\alpha x_n)=\alpha\lim\limits_{n\rightarrow\infty}x_n=\alpha x.$\\\\
Esto demuestra que $\overline{S}$ es subespacio de E.\\
\end{enumerate}
\end{proof}
\begin{defin}
\begin{enumerate}
\item Sea $E\in Norm$ y $\exists\bigl\{x_n\bigr\}_{n=1}^\infty\subset E.$\\\\
Decimos que $\bigl\{x_n\bigr\}$ es una S. de Cauchy en E si la sucesiòn $\bigl\{x_n\bigr\}$ tiene la siguiente propiedad: $\forall\epsilon>0,\exists N\in\mathbb{N}$ tal que $\forall m,n>N:\|x_m-x_n\|<\epsilon.$\\
\item Sea $E\in Norm.$\\
Decimos que E es un E. de Banach si toda sucesiòn de Cauchy en E converge.\\
\end{enumerate}
\end{defin}
\begin{prop}
Todo subespacio cerrado de un E. de Banach es tambièn de Banach.\\
\end{prop}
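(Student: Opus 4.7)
Sea $E$ un espacio de Banach y $S\subset E$ un subespacio cerrado. Para probar que $S$ es de Banach, tomar\'{i}a una sucesi\'{o}n de Cauchy arbitraria $\{x_n\}_{n=1}^\infty\subset S$ y demostrar\'{i}a que converge a un elemento del propio $S.$

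Primero observar\'{i}a que como la norma en $S$ es simplemente la restricci\'{o}n de la norma de $E,$ la sucesi\'{o}n $\{x_n\}$ es tambi\'{e}n de Cauchy cuando se la considera en $E.$ Aplicando la hip\'{o}tesis de completitud de $E,$ existe entonces $x\in E$ tal que $\lim\limits_{n\rightarrow\infty}x_n=x.$

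El paso clave es verificar que este l\'{i}mite efectivamente pertenece a $S.$ Para esto invocar\'{i}a la caracterizaci\'{o}n secuencial de la clausura establecida en la Proposici\'{o}n anterior: como $\{x_n\}_{n=1}^\infty\subset S$ y $x_n\longrightarrow x,$ se concluye que $x\in\overline{S}.$ Pero por hip\'{o}tesis $S$ es cerrado, es decir, $\overline{S}=S,$ y por tanto $x\in S.$ Esto muestra que toda S. de Cauchy en $S$ converge a un elemento de $S,$ de donde $S$ es de Banach.

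No aprecio obst\'{a}culos serios en este razonamiento; la \'{u}nica finura es que el puente entre \emph{cerrado} ($\overline{S}=S$) y \emph{pertenencia del l\'{i}mite al subespacio} se apoya esencialmente en la caracterizaci\'{o}n secuencial de la clausura probada anteriormente. Sin ese resultado, la conclusi\'{o}n $x\in S$ a partir de $x_n\in S$ y $x_n\longrightarrow x$ no ser\'{i}a inmediata.
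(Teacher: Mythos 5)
Tu propuesta es correcta y sigue esencialmente el mismo camino que el texto: la sucesi\'{o}n de Cauchy en $S$ es de Cauchy en $E,$ converge a alg\'{u}n $x\in E$ por la completitud de $E,$ y $x\in\overline{S}=S$ gracias a la caracterizaci\'{o}n secuencial de la clausura y a que $S$ es cerrado. De hecho, tu redacci\'{o}n es algo m\'{a}s cuidadosa que la del texto, que omite decir expl\'{i}citamente que la sucesi\'{o}n tomada es de Cauchy.
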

\begin{proof}
Es claro que S: Norm. Veamos que S:Banach.\\\\
Sea $\bigl\{x_n\bigr\}_{n=1}^\infty\subset S.$ Veamos que $x_n\longrightarrow x\in S.$\\\\
Como $\bigl\{x_n\bigr\}_{n=1}^\infty\subset S\subset E, \bigl\{x_n\bigr\}_{n=1}^\infty\subset E.$\\\\
Luego, $x_n\longrightarrow x\in E.$ Resta demostrar que $x\in S.$\\\\
Como S es cerrado, $S=\overline{S},$ bastarà con dm. que $x\in\overline{S},$ lo cual resulta claro ya que si $\bigl\{x_n\bigr\}\subset S$ y $x_n\longrightarrow x, x\in\overline{S}.$\\\\
\end{proof}
Vamos a establecer otras propiedades topològicas de los E.N.\\\\
Sea $E\in Norm, a\in E$ y $\gamma>0.$\\\\
Consideremos la $B(a,\gamma)=\bigl\{x\in E\diagup\|x-a\|<\gamma\bigr\}$\\\\
y $S(a,\gamma)=\bigl\{x\in E\diagup\|x-a\|=\gamma\bigr\}.$\\\\
Vamos a demostrar que si E es infinito, estos conjuntos tienen infinitos puntos.\\
\begin{itemize}
\item Tomemos $x\in E, x\neq0.$\\\\
Entonces $\left(a+\dfrac{\gamma}{2\|x\|}x\right)\in B(a,\gamma).$\\\\
En efecto:$\left\|a+\dfrac{\gamma}{2\|x\|}x-a\right\|=\dfrac{r}{2}\left\|\dfrac{x}{\|x\|}\right\|=\dfrac{\gamma}{2}<\gamma$\\\\
lo que demuestra que $\forall x\in E;\left(a+\dfrac{\gamma}{2\|x\|}x\right)\in B(a,\gamma)$ y por tanto, si E es infinito, la $B(a,\gamma)$ contiene $\infty$ puntos.\\\\
\item Ademàs, $\left\|a+\dfrac{\gamma}{2\|x\|}x-a\right\|=\gamma\left\|\dfrac{x}{\|x\|}\right\|=\gamma$ lo que prueba que $\forall x\in E,\\\left(a+\dfrac{\gamma}{\|x\|}x\right)\in S(a,\gamma)$ y de nuevo, si E es infinito, $S(a,\gamma)$ es un conjunto infinito.\\
\end{itemize}
\begin{prop}
Sea $E\in Norm.$\\
La clausura de una bola abierta es la bola cerrada con el mismo radio y centro. O sea: \begin{gather}\label{i}B^*(a,\gamma)=\overline{B(a,\gamma)}\end{gather}\\
\end{prop}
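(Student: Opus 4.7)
La idea es probar las dos inclusiones $\overline{B(a,\gamma)}\subseteq B^{*}(a,\gamma)$ y $B^{*}(a,\gamma)\subseteq\overline{B(a,\gamma)}$. Para la primera, bastar\'a con observar que $B^{*}(a,\gamma)$ es cerrado y contiene a $B(a,\gamma)$, con lo cual $\overline{B(a,\gamma)}$ (el menor cerrado que contiene a $B(a,\gamma)$) queda atrapado dentro de $B^{*}(a,\gamma)$. Para ver que $B^{*}(a,\gamma)$ es cerrado utilizar\'ia la caracterizaci\'on secuencial de la clausura probada antes: si $\{y_n\}_{n=1}^\infty\subset B^{*}(a,\gamma)$ con $y_n\longrightarrow y$, la desigualdad $\bigl|\|y_n-a\|-\|y-a\|\bigr|\leqslant\|y_n-y\|\longrightarrow 0$ (continuidad de la norma, que es uno de los hechos triviales recordados justo antes) da $\|y-a\|=\lim\limits_{n\rightarrow\infty}\|y_n-a\|\leqslant\gamma$, luego $y\in B^{*}(a,\gamma)$, mostrando que $B^{*}(a,\gamma)=\overline{B^{*}(a,\gamma)}$.

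Para la inclusi\'on rec\'iproca tomo $x\in B^{*}(a,\gamma)$, i.e., $\|x-a\|\leqslant\gamma$. Si $\|x-a\|<\gamma$, entonces $x\in B(a,\gamma)\subseteq\overline{B(a,\gamma)}$ y listo. El caso interesante es $\|x-a\|=\gamma$; aqu\'i constru\'iria expl\'icitamente la sucesi\'on $x_n=a+\left(1-\dfrac{1}{n}\right)(x-a)$ para $n\geqslant 1$. Un c\'alculo directo muestra que $\|x_n-a\|=\left(1-\dfrac{1}{n}\right)\gamma<\gamma$, es decir $\{x_n\}_{n=1}^\infty\subset B(a,\gamma)$, y adem\'as $\|x_n-x\|=\dfrac{1}{n}\|x-a\|=\dfrac{\gamma}{n}\longrightarrow 0$, luego $x_n\longrightarrow x$. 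Por la Prop. que caracteriza $\overline{A}$ mediante sucesiones contenidas en $A$, se concluye $x\in\overline{B(a,\gamma)}$.

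El punto delicado es precisamente el caso de frontera $\|x-a\|=\gamma$: en un espacio m\'etrico general la igualdad entre la clausura de la bola abierta y la bola cerrada puede fallar (un espacio discreto ya da un contraejemplo), y es la estructura de espacio vectorial normado --- concretamente la posibilidad de tomar combinaciones convexas entre $a$ y $x$ --- lo que permite exhibir la sucesi\'on anterior. El resto de los ingredientes (continuidad de la norma y minimalidad de la clausura frente a conjuntos cerrados) son hechos rutinarios ya disponibles en el texto.
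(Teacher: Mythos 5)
Tu demostraci\'on es correcta y sigue en esencia la misma ruta que el texto: en ambos casos el paso clave para el punto frontera es la contracci\'on radial $a+t(x-a)$ con $t<1$, que produce puntos de la bola abierta arbitrariamente cercanos a $x$. La \'unica diferencia es de empaque: t\'u lo presentas como la sucesi\'on $x_n=a+\left(1-\tfrac{1}{n}\right)(x-a)$ e invocas la caracterizaci\'on secuencial de la clausura, mientras que el texto trabaja vecindad por vecindad con el punto $z=a+\left(1-\tfrac{\epsilon}{2\gamma}\right)(y-a)$; adem\'as t\'u justificas expl\'icitamente que $B^{*}(a,\gamma)$ es cerrado, cosa que el texto solo afirma.
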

\begin{proof}
Es claro que $B(a,\gamma)\subset B^*(a,\gamma)\hspace{0.5cm}\overline{B(a,\gamma)}\subset\overline{B^*{a,\gamma}}=B^*(a,\gamma)$\\\\
O sea que $\overline{B(a,\gamma)}\subset B^*(a,\gamma).$\\\\
Para tener ~\eqref{i} resta demostrar que $B^*(a,\gamma)\subset\overline{B(a,\gamma)}.$\\\\
Es claro que $B^*(a,\gamma)=B(a,\gamma)\cup S(a,\gamma).$ Sea $y\in B^*(a,\gamma).$ Veamos que $y\in\overline{B(a,\gamma)}.$\\
\begin{enumerate}
\item Supongamos que $y\in\overline{B(a,\gamma)}.$ Como $B(a,\gamma)\subset\overline{B(a,\gamma)}, y\in\overline{B(a,\gamma)}.$\\\\
\item Supongamos que $y\in S(a,\gamma),$ i.e:\begin{gather}\label{j}\|y-a\|=r\end{gather}\\\\
Debemos probar que $y\in\overline{B(a,\gamma)}\Longrightarrow\forall N\in\mathcal{N}_y:N\cap B(a,\gamma)\neq\emptyset.$\\\\
Sea $N\in\mathcal{N}_y.$ Entonces $\exists G:\text{abierto en E tal que $y\in G\subset N\Longrightarrow\exists\gamma >0$}$ \\ tal que $B(y,\gamma)\subset G\subset N.$\\\\
Si logramos probar que $B(y,\gamma)\cap B(a,\gamma)\neq\emptyset,$ y puesto que $B(y\gamma)\subset N$ se tendrìa que $N\cap B(a,\gamma)\neq\emptyset$ como se quiere.\\\\
Tomemos $\underset{fijo}{\underbrace{\epsilon}}<2\gamma.$\\\\
Sea $z=a+\left(1-\dfrac{\epsilon}{2\gamma}\right)(y-a)$ y veamos que $z\in B(y,\gamma), z\in B(a,\gamma).$\\\\
\begin{itemize}
\item $\|z-y\|=\left\|a+\left(1-\dfrac{\epsilon}{2\gamma}\right)(y-a)-y\right\|\\\\
=\left\|\dfrac{\epsilon}{2\gamma}a-\dfrac{\epsilon}{2\gamma}y\right\|=\dfrac{\epsilon}{2\gamma}\|y-a\|\underset{\overset{\nearrow}{~\eqref{j}}}{=}\dfrac{\epsilon}{2\gamma}\gamma=\dfrac{\epsilon}{2}<\gamma$\\\\
Lo que prueba que $z\in B(y,\gamma).$\\\\
\item $\|z-a\|=\left\|a+\left(1-\dfrac{\epsilon}{2\gamma}\right)(y-a)-y\right\|=\dfrac{(2\gamma-\epsilon)}{2\gamma}\|y-a\|\\\\
\underset{\overset{\nearrow}{~\eqref{j}}}{=}\dfrac{\gamma}{2\gamma}(2\gamma-\epsilon)\\\\
=\dfrac{1}{2}(2\gamma-\epsilon)<\dfrac{2\gamma}{2}=\gamma$\\\\
Lo que demuestra que $z\in B(a,\gamma).$\\
\end{itemize}
\end{enumerate}
\end{proof}
\begin{defin}[$\textbf{\text{Conjuntos Convexos en un E.L.N}}$]
Sea $E\in Norm$ y $A\subset E.$\\\\
Decimos que A es convexo si $\forall x,y\in A$ y $\forall\mathsf{t}\in[0,1]$ el vector $\left((1-\mathsf{t}x+\mathsf{t}y)\right)\in A.$\\
\end{defin}
\begin{prop}
Sea $E\in Norm$ y $\underset{convexo}{\underbrace{A}}\subset E.$ Entonces $\overline{A}$ y $A^{\circ}$ son convexos.\\
\end{prop}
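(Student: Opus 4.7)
The plan is to handle $\overline{A}$ and $A^{\circ}$ separately, since they call for entirely different tools: sequential convergence for the closure, and open balls for the interior.

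For the closure, the plan is to invoke the sequential characterization of $\overline{A}$ proved in the earlier proposition together with the continuity of the vector operations recorded just before the present statement. Fix $x,y\in\overline{A}$ and $\mathsf{t}\in[0,1]$; pick sequences $\{x_n\},\{y_n\}\subset A$ with $x_n\to x$ and $y_n\to y$. Convexity of $A$ gives $(1-\mathsf{t})x_n+\mathsf{t}y_n\in A$ for every $n$, and the limit laws $\alpha x_n\to\alpha x$ and $x_n+y_n\to x+y$ give $(1-\mathsf{t})x_n+\mathsf{t}y_n\to (1-\mathsf{t})x+\mathsf{t}y$. Hence the convex combination is a limit of a sequence in $A$, so it lies in $\overline{A}$. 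This half is essentially mechanical.

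For the interior, the plan is a direct ball computation. Given $x,y\in A^{\circ}$ and $\mathsf{t}\in[0,1]$, choose $\epsilon_1,\epsilon_2>0$ with $B(x,\epsilon_1)\subset A$ and $B(y,\epsilon_2)\subset A$, and set $\epsilon=\min\{\epsilon_1,\epsilon_2\}>0$. The goal is to show $B\bigl((1-\mathsf{t})x+\mathsf{t}y,\epsilon\bigr)\subset A$, which will put $(1-\mathsf{t})x+\mathsf{t}y$ in $A^{\circ}$ by the characterization of the interior via balls.

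The main step — and really the only nontrivial idea in the whole proof — is a translation trick: for any $z\in B\bigl((1-\mathsf{t})x+\mathsf{t}y,\epsilon\bigr)$ set $w=z-\bigl((1-\mathsf{t})x+\mathsf{t}y\bigr)$, so $\|w\|<\epsilon$. One then checks the identity $z=(1-\mathsf{t})(x+w)+\mathsf{t}(y+w)$, observes that $x+w\in B(x,\epsilon)\subset A$ and $y+w\in B(y,\epsilon)\subset A$, and concludes $z\in A$ by convexity of $A$. This shows $B\bigl((1-\mathsf{t})x+\mathsf{t}y,\epsilon\bigr)\subset A$, hence $(1-\mathsf{t})x+\mathsf{t}y\in A^{\circ}$, finishing the proof. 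The hard part, insofar as there is one, is noticing that the same displacement $w$ can be transported identically to both $x$ and $y$ so that $z$ reappears as a convex combination of two points guaranteed to sit inside $A$.
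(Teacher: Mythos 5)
Your proposal is correct and follows essentially the same route as the paper: the closure half via the sequential characterization of $\overline{A}$ and the limit laws for sums and scalar multiples, and the interior half via exactly the same translation trick (the paper's $x_1=x+w-[(1-\mathsf{t})x+\mathsf{t}y]$ and $y_1=y+w-[(1-\mathsf{t})x+\mathsf{t}y]$ are your $x+w$ and $y+w$). No substantive differences to report.
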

\begin{proof}
\begin{enumerate}
\item Por Hip, A es convexo. Veamos que $\overline{A}$ es convexo.\\\\
Tomemos $x,y\in\overline{A}$ y $\mathsf{t}\in[0,1].$\\\\
Bastarà con demostrar que $\left[(1-\mathsf{t}x+\mathsf{t}y)\right]\in \overline{A},$ o que $\exists\bigl\{w_n\bigr\}_{n=1}^\infty\subset A$ tal que \begin{gather}\label{k}\lim\limits_{n\rightarrow\infty}w_n=(1-\mathbf{t})x+\mathsf{t}y\end{gather}\\\\
Como $x\in\overline{A},\exists\bigl\{x_n\bigr\}_{n=1}^\infty\subset \overline{A}$ tal que $\lim\limits_{n\rightarrow\infty}x_n=x.$\\\\
Como $y\in\overline{A},\exists\bigl\{y_n\bigr\}_{n=1}^\infty\subset \overline{A}$ tal que $\lim\limits_{n\rightarrow\infty}y_n=y$\\\\
Fijemos $x\in\mathbb{N}.$ Entonces $x_n,y_n\in A$ y como A es convexo por Hip, $\left[(1-\mathsf{t}x+\mathsf{t}y)\right]\in A$ cualquiera sea $n\in\mathbb{N}.$\\\\
Consiremos ahora la sucesiòn $\bigl\{w_n\bigr\}_{n=1}^\infty\subset A.$\\\\
Ahora, $\lim\limits_{n\rightarrow\infty}w_n=\lim\limits_{n\rightarrow\infty}\left[(1-\mathsf{t}x+\mathsf{t}y)\right]=(1-\mathsf{t})x+\mathsf{t}y$ y se tiene ~\eqref{k}.\\\\
\item Por Hip. A es convexo. Veamos que $A^\circ$ es convexo.\\\\
De nuevo se toman $x,y\in A^\circ$ y $\mathsf{t}\in[0,1].$ Veamos que $\left[(1-\mathsf{t}x+\mathsf{t}y)\right]\in A^\circ\Longrightarrow\exists\gamma>0$ tal que $B\left((1-\mathsf{t})x+\mathsf{t}y,\gamma\right)\subset A.$\\\\
Como $x\in A^\circ,\exists\gamma_1>0$ tal que $B(x,\gamma_1)\subset A.$\\\\
Como $y\in A^\circ,\exists\gamma_2>0$ tal que $B(y,\gamma_2)\subset A.$\\\\
Tomando $\gamma<mìn\bigl\{\gamma_1,\gamma_2\bigr\}$ se tiene que $B(x,\gamma)\subset A, B(y,\gamma)\subset A.$\\\\
Consideremos la $B\left((1-\mathsf{t})x+\mathsf{t}y,\gamma\right).$ Veamos que $B\left((1-\mathsf{t})x+\mathsf{t}y,\gamma\right)\subset A.$\\\\
Sea $w\in\left((1-\mathsf{t})x+\mathsf{t}y,\gamma\right).$\\\\
Entonces \begin{gather}\label{l}\left\|w-\left[(1-\mathsf{t})x+\mathsf{t}y\right]\right\|<\gamma\end{gather}\\\\
Definamos $x_1=x+w-\left[(1-\mathsf{t})x+\mathsf{t}y\right].$ Entonces $x_1-x=w-\left[(1-\mathsf{t})x+\mathsf{t}y,\gamma\right]$ y por tanto, $\|x_1-x\|=\left\|w-\left[(1-\mathsf{t})x+\mathsf{t}y\right]\right\|<\gamma$, lo que nos demuestra que $x_1\in B(x,\gamma)\subset A,$ i.e, $\boxed{x_1\in A}.$\\\\
Definamos $y_1=y+w-\left[(1-\mathsf{t})x+\mathsf{t}y\right].$ Entonces $y_1-y=w-\left[(1-\mathsf{t})x+\mathsf{t}y,\right]$ y por tanto, $\|y_1-y\|=\left\|w-\left[(1-\mathsf{t})x+\mathsf{t}y\right]\right\|<\gamma$, lo que nos demuestra que $y_1\in B(y,\gamma)\subset A,$ i.e, $\boxed{y_1\in A}.$\\\\
Como $\mathsf{t}\in[0,1]$ y $x_1,y_1\in A;$ convexo se tiene que $(1-\mathsf{t})x_1+\mathsf{t}y_1\in A.$\\\\
O sea que $(1-\mathsf{t})\left\{x+w-\left[(1-\mathsf{t})x+\mathsf{t}y\right]\right\}+\mathsf{t}\left\{y+w-\left[(1-\mathsf{t})x+\mathsf{t}y\right]\right\}\in A $\\\\
i.e, $(1-\mathsf{t})+\mathsf{t}y+w-\left[(1-\mathsf{t})x+\mathsf{t}y\right]\in A.$\\\\
Lo que nos demuestra que $w\in A.$\\
\end{enumerate}
\end{proof}
\begin{prop}
Toda bola abierta ò cerrada de un E.N. es un conjunto convexo.\\
\end{prop}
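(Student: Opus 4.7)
El plan es demostrar la convexidad de ambos tipos de bola en paralelo, ya que el argumento es esencialmente el mismo y s\'olo cambia la desigualdad estricta por la no estricta. Sea $E\in Norm$, $a\in E$ y $\gamma>0$. Fijada una bola $B(a,\gamma)$ (o $B^*(a,\gamma)$), tomo $x,y$ en ella y $\mathsf{t}\in[0,1]$; el objetivo es mostrar que $(1-\mathsf{t})x+\mathsf{t}y$ sigue perteneciendo a la bola, es decir, controlar $\|(1-\mathsf{t})x+\mathsf{t}y-a\|$.

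El paso central es el truco algebraico de reescribir el centro como $a=(1-\mathsf{t})a+\mathsf{t}a$, lo cual permite la factorizaci\'on
\[(1-\mathsf{t})x+\mathsf{t}y-a=(1-\mathsf{t})(x-a)+\mathsf{t}(y-a).\]
A partir de aqu\'i aplicar\'ia la desigualdad triangular y la homogeneidad de la norma (aprovechando que $1-\mathsf{t}\geqslant 0$ y $\mathsf{t}\geqslant 0$, de modo que los valores absolutos son innecesarios) para obtener
\[\|(1-\mathsf{t})x+\mathsf{t}y-a\|\leqslant (1-\mathsf{t})\|x-a\|+\mathsf{t}\|y-a\|.\]

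Para la bola abierta se usan $\|x-a\|<\gamma$ y $\|y-a\|<\gamma$ para concluir que el lado derecho es estrictamente menor que $(1-\mathsf{t})\gamma+\mathsf{t}\gamma=\gamma$, de modo que $(1-\mathsf{t})x+\mathsf{t}y\in B(a,\gamma)$. Para la bola cerrada, $\|x-a\|\leqslant\gamma$ y $\|y-a\|\leqslant\gamma$ dan directamente la desigualdad no estricta $\|(1-\mathsf{t})x+\mathsf{t}y-a\|\leqslant\gamma$, i.e, el punto pertenece a $B^*(a,\gamma)$.

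No anticipo ning\'un obst\'aculo serio: la \'unica observaci\'on no trivial es la identidad $a=(1-\mathsf{t})a+\mathsf{t}a$ que introduce la combinaci\'on convexa dentro de la norma; lo dem\'as se reduce a la desigualdad triangular aplicada de manera directa. En particular, este resultado puede verse como el ladrillo b\'asico que, combinado con la Prop. anterior, implica autom\'aticamente que toda bola cerrada es tambi\'en la clausura de una bola abierta convexa.
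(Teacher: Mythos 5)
Tu demostraci\'on es correcta y, para la bola abierta, coincide exactamente con la del texto: el mismo truco de escribir $a=(1-\mathsf{t})a+\mathsf{t}a$ para factorizar $(1-\mathsf{t})x+\mathsf{t}y-a=(1-\mathsf{t})(x-a)+\mathsf{t}(y-a)$, seguido de la desigualdad triangular y la homogeneidad de la norma. La diferencia est\'a en c\'omo tratas la bola cerrada: t\'u repites el mismo c\'alculo sustituyendo la desigualdad estricta por la no estricta, obteniendo directamente $\|(1-\mathsf{t})x+\mathsf{t}y-a\|\leqslant(1-\mathsf{t})\gamma+\mathsf{t}\gamma=\gamma$, mientras que el texto deduce la convexidad de $B^*(a,\gamma)$ de manera indirecta, usando la identidad $B^*(a,\gamma)=\overline{B(a,\gamma)}$ (probada antes) junto con la proposici\'on de que la clausura de un conjunto convexo es convexa. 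Tu ruta es m\'as elemental y autocontenida: no depende de esos dos resultados auxiliares y funciona igual en cualquier espacio normado. La ruta del texto, en cambio, reutiliza la maquinaria ya establecida y evita repetir el c\'alculo, a costa de apoyarse en que la bola cerrada sea efectivamente la clausura de la abierta (hecho que es verdadero en espacios normados pero no en espacios m\'etricos generales, de modo que tu argumento directo es incluso m\'as robusto). Ambas son v\'alidas en este contexto.
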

\begin{proof}
$E\in Norm,a\in E$ y $\gamma>0.$\\\\
Consideremos la $B(a,\gamma)=\bigl\{x\in\diagup\|x-a\|<\gamma\bigr\}.$ Veamos que $B(a,\gamma):\text{convexo}.$\\\\
Tomemos $x,y\in B(a,\gamma)$ y $\underset{fijo}{\underbrace{\mathsf{t}}}\in[0,1].$\\\\
Entonces \begin{gather}\label{l}\|x-a\|<\gamma,\|y-a\|<\gamma\end{gather}\\\\
Entonces $(1-\mathsf{t})x+\mathsf{t}y\in B(a,\gamma)\Longrightarrow\left\|(1-\mathsf{t})x+\mathsf{t}y-a\right\|<\gamma.$\\\\
$\left\|(1-\mathsf{t})x+\mathsf{t}y-a\right\|<\gamma=\left\|(1-\mathsf{t})x+\mathsf{t}a+\mathsf{t}y-a-\mathsf{t}a\right\|\\\\
=\left\|(1-\mathsf{t})(x-a)+\mathsf{t}(y-a)\right\|\\\\
\leqslant(1-\mathsf{t})\|x-a\|+\mathsf{t}\|y-a\|\underset{\overset{\nearrow}{~\eqref{l}}}{<}(1-\mathsf{t})\gamma+\mathsf{t}\gamma=\gamma$\\\\
Ahora, $B^*(a,\gamma)=\overline{B(a,\gamma)}.$ Como $B(a,\gamma)$ es convexo, $\overline{B(a,\gamma)}$ es convexo y por lo tanto, $B^*(a,\gamma)$ es convexo.\\\\
\end{proof}
El siguente lema serà ùtil al $\underline{\text{completar}}$ un E.N. $(E,\|\|).$\\
\begin{lem}[$\textbf{\text{Un criterio para establecer cuando un E.L.N. es de Banach}}$]
Sea $E\in Norm.$ Si $\exists A\subset E$ tal que:\\
\begin{enumerate}
\item $\overline{A}=E$ i.e. A es denso E.\\
\item $\forall\underset{\text{S. de Cauchy}}{\underbrace{\bigl\{x_n\bigr\}}}\subset A,\bigl\{x_n\bigr\}$ converge en E.\\
Entonces $E\in Ban.$\\
\end{enumerate}
\end{lem}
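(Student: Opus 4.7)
The plan is to take an arbitrary Cauchy sequence $\{x_n\} \subset E$ and manufacture a companion sequence in $A$ whose limit (guaranteed by hypothesis (2)) will also serve as the limit of $\{x_n\}$.

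First, I would exploit hypothesis (1): since $\overline{A} = E$, by the characterization of closure via the balls $B(x, 1/n)$ (or equivalently via sequences, as in the Prop.\ about $x \in \overline{A} \iff \exists \{x_n\} \subset A$ with $x_n \to x$), for each $n \in \mathbb{N}$ there exists $a_n \in A$ with $\|x_n - a_n\| < 1/n$. This produces a sequence $\{a_n\}_{n=1}^\infty \subset A$ that shadows $\{x_n\}$.

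Next I would show $\{a_n\}$ is itself a Cauchy sequence in $A$. Given $\epsilon > 0$, using that $\{x_n\}$ is Cauchy in $E$ I can pick $N_1$ so that $\|x_n - x_m\| < \epsilon/3$ for $n, m > N_1$, and pick $N_2$ with $1/N_2 < \epsilon/3$. Then for $n, m > \max\{N_1, N_2\}$, the triangle inequality gives
\[
\|a_n - a_m\| \leqslant \|a_n - x_n\| + \|x_n - x_m\| + \|x_m - a_m\| < \tfrac{1}{n} + \tfrac{\epsilon}{3} + \tfrac{1}{m} < \epsilon.
\]
By hypothesis (2), $\{a_n\}$ then converges to some $x \in E$.

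Finally I would verify that this $x$ is also the limit of the original sequence: $\|x_n - x\| \leqslant \|x_n - a_n\| + \|a_n - x\| < 1/n + \|a_n - x\|$, and both terms tend to $0$ as $n \to \infty$, so $x_n \to x$ in $E$. Since $\{x_n\}$ was an arbitrary Cauchy sequence in $E$, this shows $E \in \text{Ban}$. The only subtle point is the three-epsilon bookkeeping in showing $\{a_n\}$ is Cauchy; once that is in hand, the result is immediate, and no appeal to additional structure on $A$ is needed beyond density and the Cauchy-completeness condition stated in the hypothesis.
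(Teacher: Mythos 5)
Your proposal is correct and follows essentially the same route as the paper: approximate each $x_n$ by some $a_n\in A$ with $\|x_n-a_n\|<\tfrac{1}{n}$ (the paper builds a doubly-indexed array $\beta_n^N$ and takes the diagonal $\beta_n^n$, which is the same sequence), show it is Cauchy, and invoke hypothesis (2). In fact your write-up is more complete than the paper's, which leaves the Cauchy verification of the diagonal sequence as an exercise and never explicitly carries out the final step that $x_n$ converges to the same limit; your three-epsilon bookkeeping and the closing estimate $\|x_n-x\|\leqslant\|x_n-a_n\|+\|a_n-x\|$ supply exactly those missing details.
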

\begin{proof}
Sea $\bigl\{x_n\bigr\}_{n=1}^\infty:$ S. de Cauchy en E. Veamos que $\bigl\{x_n\bigr\}$ converge en E.\\\\
Tomemos un tèrmino $\underset{fijo}{\underbrace{x_N}}$ de la sucesiòn.\\\\
Entonces $x_N\in E=\overline{A}$ (Hip.) Luego $x_N\in\overline{A}$ y por tanto, $\forall n\in\mathbb{N}:B(x_N,\frac{1}{n})\bigcap A\neq\emptyset.$\\\\
Si $n=1,\exists\beta_1^N\in A\,\,\text{tal que $\|\beta_1^N-x_N\|<1$}$\\\\
Si $n=2,\exists\beta_2^N\in A\,\,\text{tal que $\|\beta_2^N-x_N\|<\frac{1}{2}$}$\\\\
Si $n=3,\exists\beta_3^N\in A\,\,\text{tal que $\|\beta_3^N-x_N\|<\frac{1}{3}$}$\\\\
$\text{etc}\ldots\ldots\ldots$\\\\
De este modo, si variamos a N:\\\\
para $x_1,\exists\bigl\{\beta_n^1\bigr\}_{n=1}^\infty\subset A\,\,\text{tal que $\|\beta_1^1-x_1\|<1$}$\\\\
$\|\beta_2^1-x_1\|<\frac{1}{2}$\\\\
$\|\beta_3^1-x_1\|<\frac{1}{3}$\\\\
$\vdots$\\\\
para $x_2,\exists\bigl\{\beta_n^2\bigr\}_{n=1}^\infty\subset A\,\,\text{tal que $\|\beta_1^2-x_2\|<1$}$\\\\
$\|\beta_2^2-x_2\|<\frac{1}{2}$\\\\
$\|\beta_3^2-x_2\|<\frac{1}{3}$\\\\
$\vdots$\\\\
para $x_3,\exists\bigl\{\beta_n^3\bigr\}_{n=1}^\infty\subset A\,\,\text{tal que $\|\beta_1^3-x_3\|<1$}$\\\\
$\|\beta_2^3-x_3\|<\frac{1}{2}$\\\\
$\|\beta_3^3-x_3\|<\frac{1}{3}$\\\\
$\vdots$\\\\
Consideremos la sucesiòn $\bigl\{\beta_n^n\bigr\}_{n\in\mathbb{N}}\subset A.$\\\\
Se tiene que $\forall n\in\mathbb{N}:\|\beta_n^n-x_n\|<\frac{1}{n}.$\\\\
Se deja como ejercicio al lector probar que la sucesiòn $\bigl\{\beta_n^n\bigr\}$ es una S. de Cauchy en A (y por la Hip. convergerà en E).\\
\end{proof}
Continuando entonces con las propiedades de los E.L.N; presentamos ahora, uno de los resultamos màs impotantes del Anàlisis Funcional.\\\\
\begin{prop}\label{i}
En un espacio vectorial de dimensiòn finita, todas las normas son equivalentes.\\
\end{prop}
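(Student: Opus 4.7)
El plan es reducir la comparación de dos normas arbitrarias sobre $E$ al hecho, ya demostrado en la Proposición anterior, de que todo E.L.N. de dimensión finita $n$ es topológicamente isomorfo a $\mathbb{K}^n$. Sean pues $\|\cdot\|_1$ y $\|\cdot\|_2$ dos normas sobre el mismo espacio vectorial $E$ con $\dim E=n$. Bastar\'{a} con exhibir constantes $A,B>0$ tales que $A\|y\|_1\leqslant\|y\|_2\leqslant B\|y\|_1$ para todo $y\in E$, ya que esto equivale, por definici\'{o}n, a la equivalencia de $\|\cdot\|_1$ y $\|\cdot\|_2$.

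Para lograrlo, fijar\'{i}a una base $\{e_1,\ldots,e_n\}$ de $E$ y considerar\'{i}a la A.L. biyectiva $T:\mathbb{K}^n\longrightarrow E$ dada por $T(x_1,\ldots,x_n)=x_1 e_1+\cdots+x_n e_n$, dotando a $\mathbb{K}^n$ de la norma $\|x\|=|x_1|+\cdots+|x_n|$. Al aplicar la Proposici\'{o}n $E\cong\mathbb{K}^n$ al espacio $(E,\|\cdot\|_1)$ se obtiene que $T$ es un isomorfismo topol\'{o}gico, de donde existen $\alpha_1,\beta_1>0$ con $\alpha_1\|x\|\leqslant\|T(x)\|_1\leqslant\beta_1\|x\|$ para todo $x\in\mathbb{K}^n$. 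Aplicando la misma Proposici\'{o}n al espacio $(E,\|\cdot\|_2)$ (con la misma elecci\'{o}n de base y por tanto el mismo $T$), se obtienen an\'{a}logamente $\alpha_2,\beta_2>0$ con $\alpha_2\|x\|\leqslant\|T(x)\|_2\leqslant\beta_2\|x\|$.

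Dado $y\in E$, la biyectividad de $T$ garantiza que $y=T(x)$ para un \'{u}nico $x\in\mathbb{K}^n$. Combinando las cuatro desigualdades resulta $\|y\|_2\leqslant\beta_2\|x\|\leqslant(\beta_2/\alpha_1)\|y\|_1$ y, sim\'{e}tricamente, $\|y\|_1\leqslant\beta_1\|x\|\leqslant(\beta_1/\alpha_2)\|y\|_2$. Tomando $A=\alpha_2/\beta_1$ y $B=\beta_2/\alpha_1$ se obtiene la equivalencia deseada. Obs\'{e}rvese que el isomorfismo $T$ act\'{u}a como ``puente'' que permite comparar ambas normas sobre $E$ con una misma norma de referencia sobre $\mathbb{K}^n$.

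La parte sustancial del trabajo est\'{a} ya incorporada en la Proposici\'{o}n $E\cong\mathbb{K}^n$, cuya demostraci\'{o}n requiere la compacidad de la esfera unidad en $\mathbb{K}^n$ v\'{i}a Heine-Borel-Lebesgue para asegurar la cota inferior $m\|x\|\leqslant\|T(x)\|$ (continuidad de $T^{-1}$). Una vez dispuestas estas cotas, la prueba del presente enunciado no ofrece mayor obst\'{a}culo: es una manipulaci\'{o}n directa de desigualdades. El \'{u}nico punto delicado que vale la pena enfatizar es que las constantes $\alpha_i,\beta_i$ dependen de la norma en $E$ considerada, mientras que el isomorfismo $T$ (determinado por la base fija) se mantiene el mismo en ambas aplicaciones.
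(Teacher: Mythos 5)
Tu demostraci\'{o}n es correcta, pero sigue una ruta genuinamente distinta de la del texto. T\'{u} reduces todo a la Proposici\'{o}n anterior $E\cong\mathbb{K}^n$: fijas una base, usas el mismo isomorfismo coordenado $T$ para ambas normas, extraes de la continuidad de $T$ y de $T^{-1}$ las cotas bilaterales contra la norma $\|x\|=|x_1|+\cdots+|x_n|$ de $\mathbb{K}^n$, y encadenas las cuatro desigualdades; las constantes $A=\alpha_2/\beta_1$ y $B=\beta_2/\alpha_1$ est\'{a}n bien calculadas. El texto, en cambio, sigue a Bachman--Narici: construye la norma $\|x\|_0=\max_i|\alpha_i|$ asociada a una base fija, obtiene la cota superior $\|x\|\leqslant b\|x\|_0$ por la desigualdad triangular, y para la cota inferior procede por inducci\'{o}n sobre la dimensi\'{o}n, demostrando que $M=Sg\bigl\{x_1,\ldots,x_{n-1}\bigr\}$ es de Banach (luego cerrado en $X$) y que el trasladado $x_n+M$ es un cerrado que no contiene a $0$, de donde se extrae $|\alpha_n|c_n\leqslant\|x\|$ para cada coordenada. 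Lo que compra cada enfoque: el tuyo es mucho m\'{a}s corto, pero descansa en la compacidad de la esfera unidad de $\mathbb{K}^n$ (Heine--Borel), que es el ingrediente fuerte escondido en la prueba de $E\cong\mathbb{K}^n$; el del texto evita ese argumento de compacidad a cambio de una inducci\'{o}n m\'{a}s larga que usa la completitud de $\mathbb{K}$ y el car\'{a}cter cerrado de los subespacios de dimensi\'{o}n finita. Tu reducci\'{o}n es leg\'{i}tima dentro del orden l\'{o}gico del escrito, pues la Proposici\'{o}n $E\cong\mathbb{K}^n$ se demuestra antes y sin apelar a la equivalencia de normas.
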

\begin{proof}

(\textit{La prueba que haremos es tomada del texto, Funtional Analysis, by Bachman-Narici})\\\\
Sea X;K esp. vec., $dimX=n$ y sea $\|\|$ una norma cualquiera en X.\\\\
\begin{enumerate}
\item[\text{Paso 1.}] Vamos a construir una cierta norma $\|\|_0$ en X.\\
\item[\text{Paso 2.}] Enseguida demostraremos que $\|\|\sim\|\|_0.$\\\\
Esto demostrarà que todas las normas en X son equivalentes.\\\\
\item[\text{Paso 1.}] Tomemos $\underset{fija}{\underbrace{\bigl\{X_1,X_2,\ldots,X_n\bigr\}}}:$ Base de X.\\\\
Sea $x\in X.$ Entonces $\exists!\alpha_1,\ldots,\alpha_n\in K$ tal que $x=\alpha_1X_1+\ldots+\alpha_nX_n.$\\\\
Esto permite que podamos definir la funciòn\\
$\begin{diagram}
\node[2]{\|\|_0:X}\arrow{e,t}\\
\node{\mathbb{R}}
\end{diagram}$\\
$\begin{diagram}
\node[2]{x}\arrow{e,t}\\
\node{\|x\|_0=\underset{i=1,\ldots,n}{màx|\alpha_i|}}
\end{diagram}$\\\\
Es fàcil probar que $\|\|_0$ es una norma en X. La llamaremos \\$\underline{\text{norma cero en X asociada a la Base $\bigl\{X_\alpha\bigr\}$}}.$ (Si cambiamos de base, cambia la representaciòn del vector y por lo tanto, cambia la norma.)\\\\
\item[\text{Paso 2.}] Sea $\|\|$ una norma cualquiera en X. Nuestra tarea es demostrar que $\exists a,b>0$ tal que: \begin{gather}\label{n}\forall x\in X:a\|x\|_0\leqslant\|x\|\leqslant b\|x\|_0\end{gather}\\\\
En efecto: $\left\|x\|=\|\alpha_1X_1+\ldots+\alpha_nX_n\right\|\leqslant\|x\|_0\|X_1\|+\ldots\ldots+\|X\|_0\|X_n\|\leqslant\\
\underset{b}{\underbrace{\left(\|X_1\|+\ldots\ldots+\|X_n\|\right)}}\|X\|_0$\\\\\\
$\begin{cases}
\|X\|_0=\underset{i=1,\ldots,n}{màx|\alpha_i|}\\\\
x=\alpha_1X_1+\ldots+\alpha_nX_n\\\\
\therefore\hspace{0.5cm}|\alpha_1|\leqslant\underset{i=1,\ldots,n}{màx|\alpha_i|}=\|X\|_0\\\\
\hspace{1.5cm}\vdots\\\\
|\alpha_n|\leqslant\underset{i=1,\ldots,n}{màx|\alpha_i|}=\|X\|_0
\end{cases}$\\\\
Esto demuestra la desigualdad de la derecha en ~\eqref{n}.\\\\
Resta demostrar que $\exists a>0$ tal que $\forall x\in X: a\|x\|_0\leqslant\|x\|\hspace{0.5cm}\star$\\\\
Esto ya no es tan simple!!\\\\
Procederemos por inducciòn sobre la dimensiòn de X.\\\\
\item Supongamos que $dim X=1.$\\\\
Sea $\bigl\{x_1\bigr\}:$ Base de X. Tomemos $x\in X.$ Entonces $x=\alpha_1X_1.$\\\\
$\therefore\|X\|_0=|\alpha_1|.$ Sea ahora $\|\|$ una norma cualquiera en X.\\\\
$\|X\|=\|\alpha_1X_1\|=|\alpha|_1\|X_1\|=\|X_1\|\|X\|_0$ y por tanto, $\underset{a}{\underbrace{\|X_1\|}}\|X\|_0\leqslant\|X\|$.\\\\
\item Hip. de Inducciòn:\\
Asumimos que la propiedad es cierta para espacios de demensiòn $p-1.$\\\\
Sea X: K esp. vectorial con $dimX=p$ y sea $\|\|$ una norma cualquiera en X. El plan es demostrar que \begin{gather}\label{o}\exists a>0\text{ tal que $\forall x\in X: a\underset{i=1,\ldots,n}{màx|\alpha_i|}=a\|X\|_0\leqslant\|X\|.$}\end{gather}\\\\
Tomemos $M=Sg\bigl\{x_1,\ldots,x_{n-1}\bigr\}.$\\\\
Es claro que siendo $M\subset(X,\|\|), \|\|$ es una norma en M, $\bigl\{x_1,\ldots,x_{n-1}\bigr\}:$ Base de M y $dimM=p-1.$\\\\
Como $dimM=p-1,$ se tiene, por la Hip. de Inducciòn que toda norma en M es $\sim$ a la norma $\|\|_0$ en M asociada a la Base de M:$\bigl\{x_1,\ldots,x_{n-1}\bigr\}.$\\\\
Ahora, $\|\|$ es una norma en M. Por tanto, $\|\|\sim\|\|_0$ y se tiene que \begin{gather}\label{p}\exists a>0\,\,\text{tal que $\forall y\in M:a\|y\|_0\leqslant\|y\|\Longrightarrow\|y\|_0=\underset{i=1,\ldots,n}{màx|\alpha_i|}$}\end{gather}\\\\
Compare ~\eqref{o} y ~\eqref{p}: Observe los cuantificadores.\\\\
Vamos ahora a demostrar que $(M,\|\|):$ Banach.\\\\
Sea $\bigl\{y_n\bigr\}\subset(M,\|\|).$ Veamos que \begin{gather}\label{q}y_n\overset{\|\|}{\longrightarrow} y\in M\end{gather}\\\\
Como $\|\|\sim\|\|_0$ en M, $\bigl\{y_n\bigr\}$ es una S. de Cauchy en $(M,\|\|_0)$\\\\
Denotemos $\bigl\{y_n\bigr\}=\bigl\{y_1,y_1,\ldots,y_n\bigr\}\subset M.$\\\\
Ahora, como $\bigl\{x_1,\ldots,x_{n-1}\bigr\}:$ Base de M, podemos escribir:\\\\
$y_1=\alpha_1^1x_1+\alpha_1^2x_2+\ldots\ldots+\alpha_1^{n-1}x_{n-1}\\\\
y_2=\alpha_2^1x_1+\alpha_2^2x_2+\ldots\ldots+\alpha_2^{n-1}x_{n-1}\\\\
y_3=\alpha_3^1x_1+\alpha_3^2x_2+\ldots\ldots+\alpha_3^{n-1}x_{n-1}\\\\
\centerline{\vdots}$\\\\
Considremos ahora las suces. columna en K $(\mathbb{R}\text{ò $\mathbb{C}$})$:\\\\
\begin{tabular}{ccc}
$\bigl\{\alpha_j^1\bigr\}_{j=1}^\infty$&$\bigl\{\alpha_j^2\bigr\}_{j=1}^\infty\ldots\ldots$&$\bigl\{\alpha_j^{n-1}\bigr\}_{j=1}^\infty$\\\\
$\alpha_1^1$&$\alpha_1^2\ldots\ldots$&$\alpha_1^{n-1}$\\\\
$\alpha_2^1$&$\alpha_2^2\ldots\ldots$&$\alpha_2^{n-1}$\\\\
$\alpha_3^1$&$\alpha_3^2\ldots\ldots$&$\alpha_3^{n-1}$\\\\
$\downarrow$&$\downarrow\ldots\ldots$&$\downarrow$\\\\
\end{tabular}
\end{enumerate}
Vamos a demostrar que c/u de estas $(n-1)$ Sucesiones converge en K.\\\\
Sea $\epsilon>0.$ \\\\Como $\bigl\{y_n\bigr\}:$ S. de Cauchy en $(M,\|\|_0), \exists N\in\mathbb{N}\text{tal que $\forall p,q>N:\underset{i=1,\ldots,n-1}{màx|\alpha_p^i-\alpha_q^i|=\|y_p-y_q\|_0<\epsilon}$}$\\\\
$y_p=\alpha_p^1x_1+\ldots\ldots+\alpha_p^{n-1}x_{n-1}\\\\
y_q=\alpha_q^1x_1+\ldots\ldots+\alpha_q^{n-1}x_{n-1}$\\\\
Fijemos $p,q>N.$\\\\
Entonces:\\\\
$\begin{cases}
|\alpha_p^1-\alpha_q^1|\leqslant \underset{i=1,\ldots,n-1}{màx|\alpha_p^i-\alpha_q^i|}<\epsilon, & \text{siempre que $p,q>N$}\\\\
\vdots\\\\
|\alpha_p^{n-1}-\alpha_q^{n-1}|\leqslant \underset{i=1,\ldots,n-1}{màx|\alpha_p^i-\alpha_q^i|}<\epsilon
\end{cases}$\\\\
Esto significa que\\\\
$\begin{cases}
\text{la sucesiòn $\bigl\{\alpha_j^1\bigr\}_{j=1}^\infty$ es una S de Cauchy en K y como K es completo, $\alpha_j^1\overset{j\rightarrow\infty}{\longrightarrow}\alpha_1$}\\\\
\ldots\ldots\\\\
\text{la sucesiòn $\bigl\{\alpha_j^{n-1}\bigr\}_{j=1}^\infty$ es una S de Cauchy en K y como K es completo, $\alpha_j^{n-1}\overset{j\rightarrow\infty}{\longrightarrow}\alpha_{n-1}$}
\end{cases}$\\\\
Definamos $y=\alpha_1x_1+\ldots\ldots+\alpha_{n-1}x_{n_1}.$\\\\
Es claro que $y\in M.$\\\\
Para tener ~\eqref{q}, veamos que $y_n\overset{\|\|}{\longrightarrow} y.$\\\\
Bastarà con demostrar que $y_n\overset{\|\|_0}{\longrightarrow} y$ ya que siendo $\|\|\sim\|\|_0$ en M, $y_n\overset{\|\|}{\longrightarrow} y.$\\\\
Veamos pues, que $y_n\overset{\|\|_0}{\longrightarrow} y.$\\\\
Sea $\epsilon>0.$\\\\
Debemos demostrar que $\exists N\in\mathbb{N}$ tal que $\forall q>N:\underset{i=1,\ldots,n}{màx|\alpha_q^i-\alpha_i|}=\|y_q-y\|_0<\epsilon$\\\\
$\begin{cases}
\text{Dado que $\alpha_j^1\overset{j\rightarrow\infty}{\longrightarrow}\alpha_1$ y que $\epsilon>0,$ se tiene que}\\\\
\exists N_1\in\mathbb{N}\,\,\text{tal que $\forall q>N_1:|\alpha_q^i-\alpha_i|<\epsilon$}\\\\
\ldots\ldots\\\\
\text{Como $\alpha_j^{n-1}\overset{j\rightarrow\infty}{\longrightarrow}\alpha_{n-1}$ y que $\epsilon>0,$ se tiene que}\\\\
\exists N_{n-1}\in\mathbb{N}\,\,\text{tal que $\forall q>N_{n-1}:|\alpha_q^{n-1}-\alpha_{n-1}|<\epsilon$}
\end{cases}$\\\\
Luego si escogemos $N>màx\bigl\{N_1,\ldots,N_{n-1}\bigr\},$ se tiene que $\forall q>N$:$\begin{cases}|\alpha_q^1-\alpha_1|<\epsilon\\\\
\ldots\ldots\\\\
|\alpha_q^{n-1}-\alpha_{n-1}|<\epsilon.
\end{cases}$\\\\
y por lo tanto, $\underset{i=1,\ldots,n-1}{màx|\alpha_q^i-\alpha_i|}<\epsilon$ siempre que $q>N.$\\\\
Pero $\underset{i=1,\ldots,n-1}{màx|\alpha_q^i-\alpha_i|}=\|y_q-y\|_0.$\\\\
Luego $\|y_q-y\|_0<\epsilon$ siempre que $q>N.$\\\\
Esto demuestra que $y_n\overset{\|\|_0}{\longrightarrow} y$ y por tanto, $y_n\overset{\|\|}{\longrightarrow} y.$\\\\
Hemos demostrado asì que $(M,\|\|):$ Esp. Banach.\\\\
Vamos ahora a probar que $M\subset X$ es un cjto cerrado en X.\\\\
Debemos probar que $\overline{M}=M.$\\\\
Es claro que $M\subset\overline{M}.$ Resta demostrar que $\overline{M}\subset M.$\\\\
Tomemos $x\in\overline{M}$ y veamos que $x\in M.$ $x\in\overline{M}\Longrightarrow\exists\bigl\{x_n\bigr\}\subset M\text{tal que $\lim_{n\rightarrow\infty} x_n=x\in X.$}$\\\\
Pero $(M,\|\|)\in Banach$ y como $\bigl\{ x_n\bigr\}\subset M,$ entonces $\bigl\{x_n\bigr\}$ es una S. de Cauchy en $(M,\|\|)\Longrightarrow\lim_{n\rightarrow\infty} x_n\in M.$\\\\
O sea que $x\in M.$ Esto prueba que M es un cjto cerrado en X.\\\\
Consideremos ahora el conjunto $x_n+M=\bigl\{x_n+z,z\in M\bigr\}\subset X.$ \\$x_n+M,$ no es subespacio de X ya que $0\notin x_n+M.$ En efecto, si $0\in x_n+M, \exists\bigl(\beta_1x_1+\ldots+\beta_{n_1}x_{n-1}\bigr)\in M$ tal que $0_x=x_n+\beta_1x_1+\ldots,\beta_{n-1}x_{n-1}$ y se tendrìa que $x_n$ es una C.L. de $\bigl\{x_1,\ldots,x_{n-1}\bigr\},\rightarrow\leftarrow$ ya que el cjto. $\bigl\{x_1,\ldots,x_n\bigr\}$ es L.I.\\\\
Considremos ahora en X la traslaciòn definida por el vector $-x_n:$\\
$\begin{diagram}
\node[2]{T_{-x_n}:(X,\|\|)}\arrow{e,t}\\
\node{(X,\|\|)}
\end{diagram}$\\
$\begin{diagram}
\node[2]{x}\arrow{e,t}\\
\node{T_{-x_n}(x)=x-x_n}
\end{diagram}$\\\\
Es fàcil demostrar que T es continua. Ahora, como M es cerrado en X, $T_{-x_n}^{-1}(M)$ es cerrado en X.\\\\
Pero $T_{-x_{n}}^{-1}(M)=\bigl\{x\in\text{tal que $T_{-x_{n}}(x)=x-x_n\in M$}\bigr\}\\\\
=\bigl\{x\in\text{tal que $x-x_n=\alpha_1x_1+\ldots+\alpha_{n-1}x_{n-1},\alpha_i\in K $}\bigr\}\\\\
=\bigl\{x\in\text{tal que $\underset{\in M }{\underbrace{x=\alpha_1x_1+\ldots+\alpha_{n-1}x_{n-1},\alpha_i+x_n}}\in K $}\bigr\}\\\\
=x_n+M.$\\\\
Por lo tanto, $\underset{cerrado}{\underbrace{x_n+M}}\subset(X,\|\|)\Longrightarrow$ $C _x(x_n+M):$ abto en X.\\\\
Como $0\in C_x(x_n+M),\exists c_n>0$ tal que $B_{(x,\|\|)}(0;c_n)\subset C_x(x_n+M).$\\\\
i.e, $\exists c_n>0$ tal que $\forall x\in X:\text{si $\|x\|<c_n,$ entonces, $x\in C_x(x_n+M)$};$ i.e $x\notin x_n+M.$\\\\
Tomando contrarecìproco, tendremos que $\forall x\in X;$ si $x\in x_n+M$ entonces $\|x\|\geqslant c_n.$\\\\
Tomemos ahora $\underset{fijo}{\underbrace{x}}\in x_n+M.$ Entonces $x=\alpha_1x_1+\ldots+\alpha_{n-1}x_{n-1}+x_n, \alpha_i\in K.$ Y se tiene que:\\
$\|\alpha_1x_1+\ldots+\alpha_{n-1}x_{n-1}+x_n\|\geqslant c_n,$ cualquiera sean $\alpha_1,\alpha_{n-1}\in K.$\\\\
Hemos demostrado asì que $\exists c_n>0$ tal que $\forall\alpha_1,\alpha_{n-1}\in K:c_n\leqslant\|\alpha_1x_1+\ldots+\alpha_{n-1}x_{n-1}+x_n\|.$ De aquì resulta claro que $\forall\underset{\neq0}{\alpha_n}\in K,c_n\leqslant\left\|\dfrac{\alpha_1}{\alpha_n}x_1+\ldots+\dfrac{\alpha_{n-1}}{\alpha_n}x_{n-1}+x_n\right\|,$\\\\
o lo que es lo mismo, $|\alpha_n|c_n\leqslant\|\alpha_1x_1+\ldots+\alpha_{n-1}x_{n-1}+\alpha_nx_n\|$ lo que nos dice que $\forall x\in X:|\alpha_n|c_n\leqslant\|x\|,$ donde $\alpha$ es la cta $n^{a}$ del vector x en la base $\bigl\{x_1,\ldots,x_n\bigr\}.$\\\\
Resumiendo, hemos demostrado que si $M=Sg(x_1,\ldots,x_{n-1}),$ se tiene que $\exists c_n>0$ tal que $\forall x\in X:|\alpha_n|c_n\leqslant\|x\|$ donde $\alpha$ es la cta $n^{a}$ del vector x en la base $\bigl\{x_1,\ldots,x_n\bigr\}$.\\\\
Si ahora tomamos como $M=Sg(x_1,x_2,\ldots,x_{n-2},x_n)$ y desarrollamos el mismo anàlisis que se hizo para el caso en que M era el $Sg(x_1,\ldots,x_{n-1}),$ podrìamos demostrar que $\exists c_{n-1}>0$ tal que $\forall x\in X:|\alpha_{n_1}|c_{n_1}\|x\|$ donde $\alpha_{n-1}$ es la cta $(n-1)$ del vector x en la base $\bigl\{x_1,\ldots,x_n\bigr\}.$\\\\
etc$\ldots,\ldots$\\\\
Asì que en el fondo de todo lo que se tiene es que $\exists c_1,c_2,\ldots,c_n>0$ tal que $\forall x\in X:$\\\\
$\begin{cases}
|\alpha_1|c_1\leqslant\|x\|;\alpha_1:\text{cte $1^a$ de X en la base $\bigl\{x_1,\ldots,x_n\bigr\}$}\\\\
|\alpha_2|c_2\leqslant\|x\|;\alpha_1:\text{cte $2^a$ de X en la base $\bigl\{x_1,\ldots,x_n\bigr\}$}\\\\
$\vdots$\\\\
|\alpha_n|c_n\leqslant\|x\|;\alpha_n:\text{cte $n^a$ de X en la base $\bigl\{x_1,\ldots,x_n\bigr\}$}
\end{cases}$\\\\
Fijemos x en X. Entonces $x=\beta_1x_1+\ldots+\beta_{n-1}x_{n-1}+\beta_nx_n$ y se tiene que:\\\\
$\begin{cases}
|\beta_1| \underset{j=1,\ldots,n}{mìn C_j}\leqslant\|x\|\\\\
|\beta_2| \underset{j=1,\ldots,n}{mìn C_j}\leqslant\|x\|\\\\
\vdots\\\\
|\beta_n| \underset{j=1,\ldots,n}{mìn C_j}\leqslant\|x\|
\end{cases}$\\\\
Y si llamamos $a=\underset{j=1,\ldots,n}{mìn C_j}$ se tiene que $\forall i=1,\ldots,n: a|\beta_i|\leqslant\|x\|\Longrightarrow a\underset{i=1,\ldots,n}{màx|\beta_i|}\leqslant\|x\|.$ Pero $\underset{i=1,\ldots,n}{màx|\beta_i|}=\|x\|_0$\\\\
Asì que $a\|x\|_0\leqslant\|x\|,$ y como x es cualquier vector en X, hemos conseguido demostrar que $\exists a>0$ tal que $\forall x\in X:a\|x\|_0\leqslant\|x\|$ que era plan que nos habìamos propuesto en ~\pageref{i}.\\
\end{proof}
\begin{corol}
Sea $(X,\|\|):$ E.L.N. Si $E\subset X$ y $dimE=n,$ entonces $(E,\|\|):$ Banach.\\
\end{corol}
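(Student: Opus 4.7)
El plan es reducir la completitud de $(E,\|\|)$ a la completitud del cuerpo escalar $\mathbb{K},$ apoy\'{a}ndose de manera decisiva en la Proposici\'{o}n anterior (equivalencia de todas las normas en dimensi\'{o}n finita). En esencia, este argumento ya apareci\'{o} inscrito en la prueba anterior cuando se estableci\'{o} que $(M,\|\|)$ era Banach; aqu\'{i} basta repetirlo con $E$ en el papel de $M.$

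Primero observar\'{i}a que, siendo $E\subset X$ un subespacio de dimensi\'{o}n $n,$ la norma $\|\|$ de $X$ restringida a $E$ es una norma sobre $E,$ y $E$ es por derecho propio un $\mathbb{K}$-espacio vectorial de dimensi\'{o}n $n.$ Fijar\'{i}a una base $\{e_1,\ldots,e_n\}$ de $E$ y introducir\'{i}a la norma auxiliar $\|y\|_0=\max_{i=1,\ldots,n}|\beta_i|$ para $y=\sum_{i=1}^{n}\beta_i e_i.$ Por la Proposici\'{o}n anterior $\|\|\sim\|\|_0$ en $E,$ de modo que una sucesi\'{o}n es $\|\|$-Cauchy si y s\'{o}lo si es $\|\|_0$-Cauchy y converge en una norma si y s\'{o}lo si converge en la otra. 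Basta entonces demostrar que $(E,\|\|_0)$ es Banach.

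Despu\'{e}s tomar\'{i}a una sucesi\'{o}n $\|\|_0$-Cauchy $\{y_k\}_{k=1}^{\infty}$ en $E$ y la escribir\'{i}a como $y_k=\sum_{i=1}^{n}\alpha_k^i e_i.$ La condici\'{o}n de Cauchy en $\|\|_0$ dice precisamente que $\max_i|\alpha_p^i-\alpha_q^i|$ se hace peque\~{n}o para $p,q$ grandes, de donde cada sucesi\'{o}n coordenada $\{\alpha_k^i\}_k$ resulta Cauchy en $\mathbb{K};$ como $\mathbb{K}$ es completo, cada una converge a un $\alpha^i\in\mathbb{K}.$ Poniendo $y=\sum_i\alpha^i e_i,$ la misma estimaci\'{o}n aplicada en la direcci\'{o}n opuesta entrega $\|y_k-y\|_0=\max_i|\alpha_k^i-\alpha^i|\to 0,$ es decir $y_k\to y$ en $\|\|_0,$ y por la equivalencia de normas tambi\'{e}n $y_k\to y$ en $\|\|.$

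El \'{u}nico punto que podr\'{i}a sentirse como obst\'{a}culo es comprobar que el candidato a l\'{i}mite $y$ pertenece realmente a $E,$ pero esto es inmediato porque los vectores de la base $e_1,\ldots,e_n$ est\'{a}n en $E$ y $E$ es subespacio (\'{e}sta es la lectura obligada de la hip\'{o}tesis ``$E\subset X,$ $\dim E=n$''). Fuera de esto no hay dificultad sustancial: toda la parte t\'{e}cnica se absorbe en la Proposici\'{o}n precedente, y lo \'{u}nico que se usa expl\'{i}citamente aqu\'{i} es que trabajar con $\|\|_0$ transforma una sucesi\'{o}n Cauchy en $E$ en $n$ sucesiones Cauchy en $\mathbb{K},$ donde la completitud es conocida.
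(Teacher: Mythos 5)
Tu propuesta es correcta y sigue esencialmente el mismo camino que la prueba del texto: fijar una base de $E,$ introducir la norma $\|\cdot\|_0$ del m\'{a}ximo de las coordenadas, invocar la equivalencia de normas en dimensi\'{o}n finita para pasar la condici\'{o}n de Cauchy a $\|\cdot\|_0,$ reducir a $n$ sucesiones de Cauchy en $\mathbb{K}$ y usar la completitud del cuerpo para construir el l\'{i}mite $y=\sum_i\alpha^i e_i\in E.$ No hay nada que objetar.
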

\begin{proof}
Es claro que siendo $E\subset X,$ $(X,\|\|):$ E.L.N.\\\\
Sea $\bigl\{y_n\bigr\}_{n=1}^\infty$ una S. de Cauchy en $(E,\|\|).$ Veamos que $y_n\overset{\|\|}{\longrightarrow} y\in E.$\\\\
Como $dim E=n,$ sea $\underset{fija}{\bigl\{x_1,\ldots,x_n\bigr\}}:$ base de E.\\\\
Tomemos $\mathsf{v}\in E.$ Entonces $\exists!\alpha_1,\ldots,\alpha_n\in K$ tal que $\mathsf{v}=\alpha_1x_1,\ldots,\alpha_nx_n$ y podemos definir\\
$\begin{diagram}
\node[2]{\|\|_0:E}\arrow{e,t}\\
\node{\mathbb{R}}
\end{diagram}$\\
$\begin{diagram}
\node[2]{\mathsf{v}}\arrow{e,t}\\
\node{\|\mathsf{v}\|_0=\underset{i=1,\ldots,n}{màx|\alpha_i|}}
\end{diagram}$\\\\
Es fàcil probar que $\|v\|_0$ es una norma en E. Asì que $(E,\|\|_0):$ E.L.N\\\\
Dado que $\|\|$ y $\|\|_0$ son normas en E y como $dimE=n,$ se tiene, por la Prop. anterior que $\|\|\sin\|\|_0.$\\\\
Como $\bigl\{y_n\bigr\}_{n=1}^\infty\subset(E,\|\|),$ entonces, $\bigl\{y_n\bigr\}$ es una S. Cauchy en $(E,\|\|_0).$\\\\
Si logramos demostrar que $y_n\overset{\|\|_0}{\longrightarrow}y\in E,$ entonces, $y_n\overset{\|\|}{\longrightarrow}y\in E$ que es lo que se quiere probar.\\\\
Asì que todo consiste en probar que $y_n\overset{\|\|_0}{\longrightarrow}y\in E.$\\\\
Sea $\varepsilon>0.$ Como $\bigl\{y_n\bigr\}:$ S. de Cauchy en $(E,\|\|_0),$ \begin{gather}\label{r}\exists N\in\mathbb{N}\,\, \text{tal que $\forall p,q>N:\|y_p-y_q\|_0<\varepsilon$}\end{gather}\\\\
Como $\bigl\{x_1,\ldots,x_n\bigr\}:$ Base de E y $\bigl\{y_n\bigr\}_{n=1}^\infty\subset E,$ los tèrminos de la sucesiòn $y_n$ se pueden escribir asì:\\\\
$y_1=\alpha_1^1x_1+\alpha_1^2x_2+\ldots\ldots+\alpha_1^nx_n\\\\
y_2=\alpha_2^1x_1+\alpha_2^2x_2+\ldots\ldots+\alpha_2^nx_n\\\\
\ldots\ldots\\\\
y_N=\alpha_N^1x_1+\alpha_N^2x_2+\ldots\ldots+\alpha_N^nx_n\\\\
\ldots\ldots\\\\
y_p=\alpha_p^1x_1+\alpha_p^2x_2+\ldots\ldots+\alpha_p^nx_n\\\\
y_q=\underset{\downarrow}{\alpha_q^1x_1}+\underset{\downarrow}{\alpha_q^2x_2}+\ldots\ldots+\underset{\downarrow}{\alpha_q^nx_n}$\\\\
Regresemos a ~\eqref{r}.\\\\
Tomemos $\underset{fijos}{\underbrace{p,q}}>N.$ Entonces $\|y_p-y_q\|_0<\varepsilon.$\\\\
$\left\|\alpha_p^1x_1+\alpha_p^2x_2+\ldots\ldots+\alpha_p^nx_n-\bigl(\alpha_q^1x_1+\alpha_q^2x_2+\ldots\ldots+\alpha_q^nx_n\bigr)\right\|_0\\\\
=màx\bigl\{|\alpha_p^1-\alpha_q^1|,\ldots\ldots,|\alpha_p^n-\alpha_q^n|\bigr\}$\\\\
Asì que\\\\
$\begin{cases}
\left|\alpha_p^1-\alpha_q^1\right|<\varepsilon,\text{siempre que $p,q>N$}\\\\
\ldots\ldots\\\\
\left|\alpha_p^n-\alpha_q^n\right|<\varepsilon,\text{siempre que $p,q>N$}
\end{cases}$\\\\\\
$\begin{cases}
\text{la sucesiòn  de columnas $\bigl\{\alpha_j^1\bigr\}_{j=1}^\infty$ es una S. de Cauchy en $K(\mathbb{R}\,\,\text{ò}\mathbb{C})$}\\\\
\text{y como K es completo, $\alpha_1^j\overset{j\rightarrow\infty}{\longrightarrow}\alpha_1$}\\\\
\ldots\ldots\\\\
\text{la sucesiòn  de columnas $\bigl\{\alpha_j^n\bigr\}_{j=1}^\infty$ es una S. de Cauchy en $K(\mathbb{R}\,\,\text{ò}\mathbb{C})$}\\\\
\text{y como K es completo, $\alpha_j^n\overset{j\rightarrow\infty}{\longrightarrow}\alpha_n$}
\end{cases}$\\\\
Definamos ahora el vector $y=\alpha_1x_1+\ldots\ldots+\alpha_nx_n$\\\\
Resta demostrar que $y\in E $ (lo cual es obvio) y que $y_n\overset{\|\|_0}{\longrightarrow} y$\\\\
Sea $\varepsilon>0.$ Debemos demostrar que $\exists N\in\mathbb{N}$ tal que \begin{gather}\label{s}\forall j>N:màx\bigl\{|\alpha_j^1-\alpha_n|,\ldots\ldots,|\alpha_j^n-\alpha_n|\bigr\}=\left\|y_j-y\right\|_0<\varepsilon\end{gather}\\\\
Como $\alpha_j^1\overset{j\rightarrow\infty}{\longrightarrow}\alpha_1,\exists N_1\in\mathbb{N}$ tal que $\forall j>N_1:\left|\alpha_j^1-\alpha_1\right|<\varepsilon$\\\\
Como $\alpha_j^2\overset{j\rightarrow\infty}{\longrightarrow}\alpha_2,\exists N_2\in\mathbb{N}$ tal que $\forall j>N_2:\left|\alpha_j^2-\alpha_2\right|<\varepsilon$\\\\
$\ldots\ldots$\\\\
Como $\alpha_j^n\overset{j\rightarrow\infty}{\longrightarrow}\alpha_n,\exists N_n\in\mathbb{N}$ tal que $\forall j>N_n:\left|\alpha_j^n-\alpha_n\right|<\varepsilon$\\\\
Luego si escogemos $N>màx\bigl\{N_1,\ldots,N_n\bigr\}$ se tendrà que \\\begin{gather*}\forall j>N:\left|\alpha_j^1-\alpha_1\right|<\varepsilon\\\\
\left|\alpha_j^2-\alpha_2\right|<\varepsilon\\\\
\vdots\\\\
\underline{\left|\alpha_j^n-\alpha_n\right|<\varepsilon}\\\\
\therefore\hspace{0.5cm}màx\bigl\{\left|\alpha_j^i-\alpha_1\right|,\ldots\ldots,\left|\alpha_j^n-\alpha_n\right|<\varepsilon, \text{siempre que $j>N$ y se tiene ~\eqref{s}.} \end{gather*}\\\\
\end{proof}
\begin{corol}
Todo E.L.N. de dimensiòn finita es un E. de Banach.\\
\end{corol}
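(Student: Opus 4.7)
El plan es reducir el enunciado de inmediato al Corolario anterior. Dado $(E,\|\cdot\|)$ un E.L.N. con $\dim E=n$, basta observar que $E$ es subespacio de s\'{i} mismo y que $\dim E=n$. Aplicando el Corolario anterior con $X=E$, se concluye que toda S. de Cauchy en $(E,\|\cdot\|)$ converge a un elemento de $E$, i.e., $(E,\|\cdot\|)$ es un espacio de Banach. No hay, en rigor, nada m\'{a}s que demostrar.

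Si se quisiera dar una prueba independiente (repitiendo el argumento del Corolario anterior), el camino ser\'{i}a el siguiente. Primero, fijar una base $\{x_1,\ldots,x_n\}$ de $E$ y considerar la norma asociada $\|v\|_0=\max_{i=1,\ldots,n}|\alpha_i|$ para $v=\alpha_1x_1+\cdots+\alpha_nx_n$. Por la Proposici\'{o}n~\ref{i}, que afirma que todas las normas en un espacio de dimensi\'{o}n finita son equivalentes, $\|\cdot\|\sim\|\cdot\|_0$. Entonces, dada una S. de Cauchy $\{y_k\}$ en $(E,\|\cdot\|)$, tambi\'{e}n lo es en $(E,\|\cdot\|_0)$; escribiendo $y_k=\sum_i\alpha_k^ix_i$, la definici\'{o}n de $\|\cdot\|_0$ fuerza a que cada una de las $n$ sucesiones de coordenadas $\{\alpha_k^i\}_{k=1}^\infty$ sea de Cauchy en $\mathbb{K}$. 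Como $\mathbb{K}$ es completo, existen $\alpha_1,\ldots,\alpha_n\in\mathbb{K}$ con $\alpha_k^i\to\alpha_i$. Definiendo $y=\sum_i\alpha_ix_i\in E$, el mismo an\'{a}lisis (esta vez ``hacia adelante'') muestra que $y_k\to y$ en $\|\cdot\|_0$, y por la equivalencia de normas, tambi\'{e}n en $\|\cdot\|$.

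El \'{u}nico obst\'{a}culo real en toda esta cadena es precisamente la equivalencia de normas en dimensi\'{o}n finita, que ya qued\'{o} establecida en la Proposici\'{o}n~\ref{i}; una vez disponible, la completitud se transfiere trivialmente desde la de $\mathbb{K}$. Por eso, elegir\'{i}a la v\'{i}a directa de invocar el Corolario anterior con $X=E$, dejando la prueba en una sola l\'{i}nea.
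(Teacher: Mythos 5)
Tu propuesta es correcta y coincide con la intención del texto: el paper no da demostración alguna para este corolario precisamente porque se obtiene de inmediato del corolario anterior tomando $X=E$, que es exactamente tu reducción de una línea. El esbozo independiente que añades reproduce fielmente el argumento que el paper ya desarrolló (equivalencia de normas de la Proposición sobre dimensión finita y completitud de $\mathbb{K}$), así que no aporta una ruta distinta, solo la misma cadena desplegada.
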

\begin{corol}
Sea $(X,\|\|):$ E.L.N. $E\subset X.$ Entonces:\\
\begin{enumerate}
\item $(E,\|\|):$ Banach.\\\\
\item E es cerrado en X. \\O sea que $\textit{todo subespacio vectorial de dimensiòn finita de un E.L.N. es un cjto cerrado. }$\\
\end{enumerate}
\end{corol}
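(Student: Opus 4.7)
The plan is to deduce both parts directly from results already in the excerpt, so the only real work is to assemble them in the right order.

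For part (1), there is essentially nothing to do: the hypothesis that $E$ is a (finite-dimensional) subspace of $(X,\|\cdot\|)$ is exactly the hypothesis of the immediately preceding corollary, which asserts $(E,\|\cdot\|)\in \text{Banach}$. So I would simply cite that corollary and move on.

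For part (2), I would prove closedness by the sequential characterization of the closure established earlier (the proposition $x\in\overline{A}\Longleftrightarrow \exists\{x_n\}\subset A$ con $x_n\longrightarrow x$). Take $x\in\overline{E}$; by that characterization there exists $\{x_n\}_{n=1}^{\infty}\subset E$ with $x_n\longrightarrow x$ in $(X,\|\cdot\|)$. Any convergent sequence is Cauchy, so $\{x_n\}$ is Cauchy in $(X,\|\cdot\|)$ and, since $\{x_n\}\subset E$ and the norm on $E$ is the restriction of the norm on $X$, also Cauchy in $(E,\|\cdot\|)$. By part (1), $(E,\|\cdot\|)$ is Banach, hence $\{x_n\}$ converges in $E$ to some $y\in E$. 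But convergence in $E$ implies convergence in $X$ (again because the norms agree), so $x_n\longrightarrow y$ in $X$; by uniqueness of the limit in $X$, $x=y\in E$. This shows $\overline{E}\subset E$, and since $E\subset\overline{E}$ is automatic, $E=\overline{E}$, i.e.\ $E$ is closed in $X$.

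There is no real obstacle: the only point worth stating carefully is that the sequence $\{x_n\}$ is Cauchy in $(E,\|\cdot\|)$ with \emph{exactly the same} norm inherited from $X$, which is what lets the completeness of $E$ (from part (1)) kick in and produce a limit in $E$ that must coincide with the limit $x$ already living in $X$. In other words, the whole argument reduces to the well-known implication \emph{complete subspace of a normed space is closed}, applied to the finite-dimensional case, where completeness is free thanks to the previous corollary.
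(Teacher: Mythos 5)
Your proposal is correct and follows essentially the same route as the paper: part (1) is the preceding corollary, and part (2) takes $x\in\overline{E}$, extracts a sequence in $E$ converging to $x$, observes it is Cauchy in $(E,\|\cdot\|)$, and uses completeness plus uniqueness of limits to conclude $x\in E$. Your version is slightly more explicit about the uniqueness-of-limit step, which the paper leaves implicit, but the argument is the same.
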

\begin{proof}
\begin{enumerate}
\item $\checkmark$\\\\
\item Debemos demostrar que $\overline{E}=E.$ $E\subset\overline{E}\checkmark.$ Resta probar que $\overline{E}\subset E.$\\\\
Tomemos $x\in\overline{E}$ y veamos que $x\in E.$ $x\in\overline{E}\Longrightarrow\exists\bigl\{x_n\bigr\}\subset E$ tal que $\lim\limits_{n\rightarrow\infty}x_n=x(x\in X).$\\\\
Pero $(E,\|\|):$Banach y como $\bigl\{x_n\bigr\}\subset(E,\|\|),\bigl\{x_n\bigr\}$ es una S. de Cauchy en $(E,\|\|)\hspace{0.5cm}\therefore\left(\lim\limits_{n\rightarrow\infty}x_n\right)\in E.$ O sea que $x\in E.$\\
\end{enumerate}
\end{proof}
\begin{ejem}
Sabemo que $\mathbb{C}:$Esp. vectorial.\\\\
Vamos a demostrar que $(\mathbb{C},\|\|):$ E.L.N. donde $\forall z\in\mathbb{C}:\|z\|=\sqrt{x^2+y^2}=|z|.$\\\\
Una vez demostremos que $(\mathbb{C,\|\|}):$ E.L.N, como $dim\mathbb{C}=1,$ se tendrà que $(\mathbb{C,\|\|}):$ E.L.N. de dimensiòn finita. Luego $(\mathbb{C,\|\|}):$ Banach.\\\\
\begin{enumerate}
\item Es claro que $|z|\geqslant0.$ Si $z=0, |z|=|0|=0.$\\\\
Supongamos $|z|=0.$ Entonces $\sqrt{x^2+y^2}=0\Rightarrow x=y=0;z=0.$\\\\
\item $|\alpha z|^2=(\alpha z)\overline{(\alpha z)}=\alpha z\overline{\alpha}\overline{z}=(\alpha\overline{\alpha})(z\overline{z})=|\alpha|^2|z|^2.$\\\\
\item $|z_1+z_2|\leqslant|z_1|+|z_2|\\\\
|z_1+z_2|^2=(z_1+z_2)(\overline{z_1+z_2})\\\\
=(z_1+z_2)(\overline{z_1}+\overline{z_2})\\\\
=z_1\overline{z_1}+z_2\overline{z_2}+(z_1\overline{z_2}+\overline{z_1}z_2)\\\\
=|z_1|^2+|z_2|^2+2\mathsf{Re}(z_1\overline{z_2})\leqslant|z_1|^2+|z_2|^2+2|z_1\overline{z_2}|\\\\
=|z_1|^2+|z_2|^2+2|z_1||z_2|\\\\
=\left(|z_1|+|z_2|\right)^2\Longrightarrow|z_1+z_2|\leqslant|z_1|+|z_2|$\\
\end{enumerate}
\end{ejem}
\begin{ejem}
Es posible dm. de una manera directa (i.e, sin utilizar el Tma. "todo E.V. de dim. finita es Banach".) que $(\mathbb{C},\|\|)$ es Banach.\\\\
$|z|=\sqrt{x^2+y^2}$ siendo $z=x+iy$\\
\end{ejem}
\subsection{El teorema de Hahn-Banach. (caso real)}
Este importante Tma. del An. Funcional establece que todo funcional lineal continuo definido sobre un subespacio de un E.L.N. siempre se puede extender a todo el espacio conservàndose la norma del funcional.\\\\
Comenzamos con un\\
\begin{lem}
Sea $(X,\|\|):\mathbb{R}$ esp.vectorial normado, $M\subset X$ y $f\in M'=\mathcal{L}_c(M,\mathbb{R})\diagup\text{dual topològico de M}.$\\\\
Sea $x_0\in X,x_0\notin M.$\\\\
Entonces $\exists g\in S_g(M\cup\bigl\{x_0\bigr\}),$ o sea $g:S_gM\cup\bigl\{x_0\bigr\}\longrightarrow\mathbb{R}$\\\\
tal que:\\
\begin{enumerate}
\item $g\diagup M=f$\\
\item $\|g\|=\|f\|.$\\
\end{enumerate}
\end{lem}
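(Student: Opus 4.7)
El plan es reducir la construcción de $g$ a la elección de un solo número real $\alpha := g(x_0)$. Como $x_0 \notin M$, todo vector $z \in S_g(M \cup \{x_0\})$ se escribe de manera única como $z = y + t\,x_0$ con $y \in M$ y $t \in \mathbb{R}$. Por tanto, cualquier extensión lineal $g$ de $f$ queda determinada por la fórmula $g(y + t x_0) = f(y) + t \alpha$, y la condición $g|_M = f$ es automática (tomando $t = 0$). Como $g$ extiende a $f$, la desigualdad $\|g\| \geq \|f\|$ es inmediata; lo que hay que conseguir es escoger $\alpha$ de modo que
$$|f(y) + t\alpha| \leq \|f\|\, \|y + t x_0\| \qquad \forall y \in M,\ \forall t \in \mathbb{R},$$
lo cual forzará $\|g\| \leq \|f\|$ y por tanto $\|g\| = \|f\|$.

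El segundo paso sería transformar esa condición en un requisito sobre $\alpha$ únicamente. Para $t \neq 0$, dividiendo por $|t|$ y usando que $M$ es subespacio (lo que permite absorber el signo de $t$ dentro del vector $y/|t|$), la desigualdad anterior equivale a pedir que, para todo $u \in M$,
$$f(u) - \|f\|\, \|u - x_0\| \;\leq\; \alpha \;\leq\; f(u) + \|f\|\, \|u - x_0\|.$$
Existirá un $\alpha$ admisible si y sólo si el supremo de los extremos izquierdos no supera al ínfimo de los extremos derechos al variar $u$; equivalentemente, si para cualesquiera $u, v \in M$ se tiene
$$f(u) - f(v) \;\leq\; \|f\|\, \bigl(\|u - x_0\| + \|v - x_0\|\bigr).$$

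Este es el paso clave y la única parte del argumento con un ingrediente no trivial; la verificación, sin embargo, es corta: usando linealidad de $f$, su continuidad y la desigualdad triangular aplicada a $u - v = (u - x_0) - (v - x_0)$, se obtiene $f(u) - f(v) = f(u - v) \leq \|f\|\, \|u - v\| \leq \|f\|\,(\|u - x_0\| + \|v - x_0\|)$. Una vez demostrada esta desigualdad de separación, bastará tomar, por ejemplo,
$$\alpha := \sup_{u \in M} \bigl\{\, f(u) - \|f\|\, \|u - x_0\| \,\bigr\},$$
que por lo anterior queda mayorado por $\inf_{v \in M}\{f(v) + \|f\|\,\|v - x_0\|\}$ y, en consecuencia, cae en el intervalo requerido.

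El paso final sería definir $g(y + t x_0) := f(y) + t \alpha$, comprobar su buena definición (gracias a la unicidad de la representación $y + tx_0$), su linealidad, que $g|_M = f$, y que la cota puntual $|g(z)| \leq \|f\|\, \|z\|$ entrega $\|g\| \leq \|f\|$, con lo cual $\|g\| = \|f\|$. El \emph{núcleo} del argumento —y la única obstrucción real— es la desigualdad de separación; todo lo demás es manipulación rutinaria una vez reconocida la estructura $M \oplus \mathbb{R} x_0$ y la parametrización de las extensiones por $\alpha$.
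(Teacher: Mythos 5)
Tu propuesta es correcta y sigue esencialmente el mismo camino que la prueba del texto: parametrizar la extensión por el valor $\alpha=g(x_0)$, reducir la condición $|g(z)|\leqslant\|f\|\,\|z\|$ a que $\alpha$ caiga entre un supremo y un ínfimo sobre $M$, garantizar que ese intervalo es no vacío mediante la desigualdad $f(u)-f(v)\leqslant\|f\|\bigl(\|u-x_0\|+\|v-x_0\|\bigr)$, y definir $g(y+tx_0)=f(y)+t\alpha$. La única diferencia es cosmética: el texto escribe $y+x_0$ en vez de $u-x_0$ y establece $\sup\leqslant\inf$ por reducción al absurdo con la propiedad de aproximación del ínfimo, mientras que tú deduces esa comparación directamente de la desigualdad de separación, lo cual es más limpio pero es la misma idea.
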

\begin{proof}
Tomemos $\underset{fijo}{\underbrace{y_1}}\in M.$\\\\
Entonces, $\forall y\in M:\\\\
\left||f(y_1)|-|f(y)|\right|\leqslant\left|f(y_1)-f(y)\right|=\left|f(y_1-y)\right|\leqslant\|f\|\|y_1-y\|\\\\
=\|f\|\|(y_1+x_0)+(-y-x_0)\|\leqslant\|f\|\|y_1+x_0\|+\|f\|\|y+x_0\|\\\\
\therefore\\\\
|f(y)|-\|f\|\|y_1+x_0\|-\|f\|\|y+x_0\|\leqslant f(y_1)\leqslant|f(y)|+\|f\|\|y_1+x_0\|+\|f\|\|y+x_0\|\\\\
-|f(y)|-\|f\|\|y+x_0\|\leqslant\|f\|\|y_1+x_0\|-\|f(y_1)\|$ lo que dm. que el real es cota superior del cjto $\left\{-|f(y)|-\|f\|\|y+x_0\|,y\in M\right\}\\\\
\therefore\hspace{0.5cm}\exists Sup\left\{-|f(y)|-\|f\|\|y+x_0\|,y\in M\right\}=a\\\\
-\|f(y_1)\|-\|f\|\|y_1+x_0\|\leqslant|f(y)|+\|f\|\|y+x_0\|$ lo que dm. que el real es cota inferior del cjto $\left\{-|f(y)|-\|f\|\|y+x_0\|,y\in M\right\}.$\\\\
Luego $\exists \text{ìnf}\left\{-|f(y)|-\|f\|\|y+x_0\|,y\in M\right\}=b.$\\\\
Veamos que $a\leqslant b.$\\\\
Recordemos que el Sup de un cjto de nùmeros reales es la mìnima cota superior del cjto.\\\\
Si logramos dm. que b es cota superior del cjto $\left\{-|f(y)|-\|f\|\|y+x_0\|,y\in M\right\},$ tendremos que $a\leqslant b.$\\\\
Asì que vamos a dm. que b es cota superior del cjto citado, o lo que es lo mismo, que \begin{gather}\label{t}\forall y\in M:-|f(y)|-\|f\|\|y+x_0\|\leqslant b\end{gather}\\\\
Razonemos por R. abs.\\\\
O sea, supongamos que ~\eqref{t} no es cierta.\\\\
Entonces $\exists \overset{\sim}{y}\in M$ tal que $b<-|f(\overset{\sim}{y})|-\|f\|\|\overset{\sim}{y}+x_0\|\\\\
\therefore\hspace{0.5cm} 0<\varepsilon=-|f(\overset{\sim}{y})|-\|f\|\|\overset{\sim}{y}+x_0\|-b$\\\\
Pero $b=\text{ìnf}\left\{-|f(y)|-\|f\|\|y+x_0\|\right\}$ y como $\varepsilon>0,$ se tiene, por la propiedad de aproximaciòn del ìnf, que $\exists\overset{\sim}{\overset{\sim}{y}}\in M$ tal que $-|f(\overset{\sim}{\overset{\sim}{y}})|-\|f\|\|\overset{\sim}{\overset{\sim}{y}}+x_0\|<b+\varepsilon\\\\
=-|f(\overset{\sim}{y})|-\|f\|\|\overset{\sim}{y}+x_0\|\\\\
\therefore\\\\
\|f\|\left(\|\overset{\sim}{\overset{\sim}{y}}+x_0\|+\|\overset{\sim}{y}+x_0\|\right)=\|f\|\|\overset{\sim}{y}+x_0\|+\|f\|\|\overset{\sim}{\overset{\sim}{y}}+x_0\|<\left|f(\overset{\sim}{\overset{\sim}{y}})-f(\overset{\sim}{y})\right|\\\\
\|f\|\|\overset{\sim}{\overset{\sim}{y}}-\overset{\sim}{y}\|\geqslant\left|f(\overset{\sim}{\overset{\sim}{y}}-\overset{\sim}{y})\right|=\left|f(\overset{\sim}{\overset{\sim}{y}})-f(\overset{\sim}{y})\right|\geqslant\left||f(\overset{\sim}{\overset{\sim}{y}})|-|f(\overset{\sim}{y})|\right|$\\\\
O sea que $\left||f(\overset{\sim}{\overset{\sim}{y}})|-|f(\overset{\sim}{y})|\right|<|f(\overset{\sim}{\overset{\sim}{y}})|-|f(\overset{\sim}{y})|,(\rightarrow\leftarrow)$\\\\
Luego $a\leqslant b.$\\\\
Supongamos que $a<b$ y tomemos $a<\underset{fijo}{\underbrace{c}}<b.$\\\\
Como $\text{Sup}\left\{-|f(y)|-\|f\|\|y+x_0\|\right\}=a<c<b=\text{ìnf}\left\{-|f(y)|+\|f\|\|y+x_0\|\right\}$\\\\
se tiene que $\forall m\in M:\\\\
-|f(m)|-\|f\|\|m+x_0\|<c<-|f(m)|+\|f\|\|m+x_0\|$\\\\
O sea que $-\|f\|\|m+x_0\|<f(m)+c<\|f\|\|m+x_0\|,$ i.e \begin{gather}\label{u}\left|f(m)+c\right|\leqslant\|f\|\|m+x_0\|\end{gather}\\\\
cualquiera sea $m\in M.$\\\\
Ahora, es fàcil dm. que $S_g\left(M\cup\bigl\{x_0\bigr\}\right)=\left\{m+\lambda x_0, m\in M,\lambda\in\mathbb{R}\right\}.$\\\\
Definamos\\
$\begin{diagram}
\node[2]{g:S_g M\cup\bigl\{x_0\bigr\}}\arrow{e,t}\\
\node{\mathbb{R}}
\end{diagram}$\\
$\begin{diagram}
\node[2]{m+\lambda x_0}\arrow{e,t}\\
\node{g\left(m+\lambda x_0\right)=f(m)+\lambda c}
\end{diagram}$\\\\
Es fàcil dm. que $g$ es A.L. $\text{Hàgalo!}$\\\\
Veamos que $g\diagup M=f.$\\\\
Tomemos $m\in M$ y veamos que $g(m)=f(m).$\\\\
$m=m+0.x_0$ y $g(m)=g(m+0.x_0)=f(m)+0.c=f(m).$\\\\
Veamos que $g$ es continua.\\\\
Tomemos $x\in S_g\left(M\cup\bigl\{x_0\bigr\}\right).$ Entonces $x=m+\lambda x_0.$ Asumamos $\lambda\neq0.$\\\\
$|g(x)|=|g(m+\lambda x_0)|=|f(m)+\lambda c|=|\lambda f({\lambda}^{-1}m)+\lambda c|\\\\
=|\lambda||f({\lambda}^{-1}m)+c|\underset{~\eqref{u}}{\leqslant}|\lambda|\|f\|\|{\lambda}^{-1}m+x_0\|\\\\
=\|f\|\|m+\lambda x_0\|\\\\
=\|f\|\|x\|\hspace{0.5cm}\star.$\\\\
(Si $\lambda=0 \text{se llega a lo mismo}$)\\\\
Asì que $\forall x\in M:|g(x)|\leqslant\|f\|\|x\|$ lo que prueba que $g$ es continua. Resta dm que $\|g\|=\|f\|.$\\\\
Veamos que \begin{gather}\label{v}\|g\|\leqslant\|f\|\end{gather}\\\\
Como $\|g\|=\underset{\|x\|\leqslant1}{\text{Sup}|g(x)|},$ para obtener ~\eqref{v} basta con dm. que $\|f\|$ es cota superior del cjto $S=\left\{g(x), \|x\|\leqslant1\diagup x\in S_g(M\cup\bigl\{x_0\bigr\})\right\}.$\\\\
Sea $\xi\in S.$ Veamos que $\xi\leqslant\|f\|.\xi\in S\Longrightarrow\exists x=m+\lambda x_0\in S_g(M\cup\bigl\{x_0\bigr\})$ con $\|x\|\leqslant1$ tal que $\xi=|g(x)|.$\\\\
Si tenemos en cuenta $\star$ se tiene que:\\\\
$\xi=|g(x)|\leqslant\|f\|\|x\|\leqslant\|f\|\,\,\text{l.q.q.d}$\\\\
Veamos finalmente que \begin{gather}\label{w}\|f\|\leqslant\|g\|\end{gather}\\\\
$\|f\|=\underset{\|x\|\leqslant1}{\text{Sup}|f(x)|}.$ Para obtener ~\eqref{w} basta dm. que $\|g\|$ es cota superior del cjto $T=\left\{|f(x)|,\|x\|\leqslant1,x\in M\right\}.$\\\\
Sea $\xi\in T.$ Veamos que $\xi\leqslant\|g\|.$ $\xi\in T\Longrightarrow\exists x\in M$ con $\|x\|\leqslant1$ tal que $g=|f(x)|.$\\\\
Por tanto, $g(x)=g(x+0.x_0)=f(x)+0.c=f(x).\\\\
\therefore\hspace{0.5cm}|g(x)|=|f(x)|=g,$ i.e, \begin{equation}\label{x} |g(x)|=\xi\end{equation}\\\\
Como $x\in S_g(M\cup\bigl\{x_0\bigr\})$ y $\|g\|=\underset{\|y\|\leqslant1}{\text{Sup}|g(y)|}$\\\\
$|g(x)|\leqslant\|g\|$ y regresando a ~\eqref{x} se tiene que $\xi\leqslant\|g\|.$\\
\end{proof}
\begin{prop}[\textbf{\text{El Teorema de Hahn-Banach}}]
Sea $(X,\|\|):\mathbb{R}$ esp. vectorial normado, $M\subset X$ y sea $f\in M'=\mathcal{L}_c(M,\mathbb{R}).$ O sea, $f:M\longrightarrow\mathbb{R}\diagup\text{A.L. continua.}$\\\\
Entonces $\exists F\in X'=\mathcal{L}_c(X,\mathbb{R}),$ i.e, $F:X\longrightarrow\mathbb{R}$ tal que:\\
\begin{enumerate}
\item $F\diagup M=f$\\
\item $\|F\|=\|f\|$\\\\
\end{enumerate}
Y en palabras, "todo funcional lineal continuo definido sobre un subespacio de un E.L.N se puede extender (prolongar) a todo el espacio preservando la norma."\\
\end{prop}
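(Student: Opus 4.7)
El plan es aplicar el Lema de Zorn a la familia de todas las extensiones lineales continuas de $f$ que preservan la norma, usando el lema anterior como paso inductivo crucial. M\'as precisamente, considero la familia
$$\mathcal{F}=\bigl\{(N,h)\diagup M\subset N\subset X\text{ subespacio},\,h\in\mathcal{L}_c(N,\mathbb{R}),\,h\diagup M=f,\,\|h\|=\|f\|\bigr\}.$$
$\mathcal{F}\neq\emptyset$ puesto que $(M,f)\in\mathcal{F}.$ La ordeno por extensi\'on: $(N_1,h_1)\preccurlyeq(N_2,h_2)$ si y s\'olo si $N_1\subseteq N_2$ y $h_2\diagup N_1=h_1.$

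Segundo, verificar\'ia las hip\'otesis de Zorn. Sea $\mathcal{C}=\bigl\{(N_\alpha,h_\alpha)\bigr\}_{\alpha\in A}$ una cadena en $\mathcal{F}.$ Defino $\widetilde{N}=\bigcup_{\alpha\in A}N_\alpha$ y $\widetilde{h}:\widetilde{N}\longrightarrow\mathbb{R}$ por $\widetilde{h}(x)=h_\alpha(x)$ si $x\in N_\alpha.$ La definici\'on es coherente porque dos t\'erminos de la cadena son siempre comparables; $\widetilde{N}$ es subespacio por el mismo argumento (dados $x\in N_\alpha,\,y\in N_\beta,$ uno de los dos subespacios contiene al otro, luego $x+y$ y $\lambda x$ pertenecen a $\widetilde{N}$); $\widetilde{h}$ es A.L. por ser cada $h_\alpha$ A.L., y es continua con $\|\widetilde{h}\|=\|f\|$ pues $|\widetilde{h}(x)|=|h_\alpha(x)|\leqslant\|h_\alpha\|\|x\|=\|f\|\|x\|$ en cada $N_\alpha,$ lo que da $\|\widetilde{h}\|\leqslant\|f\|,$ y la desigualdad rec\'iproca se sigue de que $\widetilde{h}\diagup M=f.$ As\'i $(\widetilde{N},\widetilde{h})\in\mathcal{F}$ es cota superior de $\mathcal{C}.$

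Tercero, por el Lema de Zorn $\mathcal{F}$ admite un elemento maximal $(N^{*},F).$ Afirmo que $N^{*}=X,$ con lo cual $F$ es la extensi\'on buscada. Si fuera $N^{*}\neq X,$ tomar\'ia $x_0\in X\setminus N^{*}$ y aplicar\'ia el lema precedente al funcional $F\in(N^{*})'$ para obtener $g\in S_g(N^{*}\cup\bigl\{x_0\bigr\})'$ con $g\diagup N^{*}=F$ y $\|g\|=\|F\|=\|f\|.$ Entonces $(S_g(N^{*}\cup\bigl\{x_0\bigr\}),g)\in\mathcal{F}$ ser\'ia una extensi\'on estricta de $(N^{*},F),$ contradiciendo la maximalidad. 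Luego $N^{*}=X$ y $F$ satisface (1) y (2).

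El obst\'aculo principal, y donde hay que ser cuidadoso, es la verificaci\'on de la cota superior de la cadena: la buena definici\'on de $\widetilde{h}$ depende crucialmente de la totalidad del orden en $\mathcal{C},$ y es ah\'i donde se usa que $\mathcal{C}$ no es una familia arbitraria sino una cadena. El paso de extensi\'on de una dimensi\'on por vez ya fue hecho en el lema anterior, que es precisamente el motor que impide que el proceso de Zorn se detenga antes de agotar a $X.$
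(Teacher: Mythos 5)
Tu propuesta es correcta y sigue esencialmente el mismo camino que el texto: ordenar por extensi\'on la familia de prolongaciones que preservan la norma, obtener la cota superior de cada cadena tomando la uni\'on de los dominios (donde la buena definici\'on descansa en la comparabilidad de los elementos de la cadena), aplicar el Lema de Zorn, y usar el lema de extensi\'on en una dimensi\'on para descartar que el dominio del elemento maximal sea propio. La \'unica variante menor es que t\'u observas directamente que $(M,f)$ pertenece a la familia para ver que es no vac\'ia, mientras que el texto invoca el lema previo; ambas justificaciones son v\'alidas.
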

\begin{proof}
Sea S=$\begin{cases}
(1). \overline{f}:M\subset D_{\overline{f}}\longrightarrow\mathbb{R}, \overline{f}\in(D_{\overline{f}})'\\\\

(2).\overline{f}\,\,\text{es una extesiòn de $f$ a $D_{\overline{f}}$}, i.e, \overline{f}\diagup M=f\\\\

(3). \|\overline{f}\|=\|f\|
\end{cases}$\\\\

O sea que $\forall\overline{f}\in S, D_{\overline{f}}$ es un subespacio de X màs grande que M y que contiene a M.\\\\
Las $\overline{f}\in S$ son A.L.\\
$\begin{diagram}
\node[2]{\overline{f}:M\subset D_{\overline{f}}}\arrow{e,t}\\
\node{\mathbb{R}}
\end{diagram}$\\
$\begin{diagram}
\node[2]{x}\arrow{e,t}\\
\node{\overline{f}(x)}
\end{diagram}$\\\\
Por el lema anterior, $S\neq\emptyset.$\\\\
Vamos a definir en S una $\mathcal{R.O.P}$ asì:\\\\
Sean $\overline{f_1},\overline{f_2}\in S.$ Entonces $\overline{f_1}\preceq\overline{f_2}\Longleftrightarrow D_{\overline{f_1}}\subseteq D_{\overline{f_2}}$ y $\overline{f_2}$ es una extensiòn de $\overline{f_1}$ i.e, $\overline{f_2}\diagup D_{\overline{f_1}}=\overline{f_1}.$\\\\
O sea que $\forall x\in D_{\overline{f_1}},\overline{f_2}(x)=\overline{f_1}(x)$\\\\
Vamos a dm. que $(S,\preceq):$ cjto P.O. o que $\preceq$ es reflexiva,antisimètrica y transitiva.\\\\
\begin{enumerate}
\item $D_{\overline{f_1}}\subset D_{\overline{f_1}}$ y $\overline{f_1}\diagup D_{\overline{f_1}}=\overline{f_1},\overline{f_1}\preceq\overline{f_1}$\\
\item Supongamos que $\overline{f_1}\preceq\overline{f_2}$ y $\overline{f_2}\preceq\overline{f_1}.$ Veamos que: $\overline{f_1}=\overline{f_2}$ o que $D_{\overline{f_1}}=D_{\overline{f_2}}$ y que $\forall x\in D_{\overline{f_1}}=D_{\overline{f_2}}, \overline{f_1}(x)=\overline{f_2}(x).$\\\\
Como \begin{gather}\label{1}\overline{f_1}\preceq\overline{f_2},D_{\overline{f_1}}\subseteq D_{\overline{f_2}}\end{gather}\\\\
y \begin{gather}\label{2}\overline{f_2}\diagup D_{\overline{f_1}}=\overline{f_1} \,\,\text{i.e,} \forall x\in D_{\overline{f_1}}:\overline{f_2}(x)=\overline{f_1}(x)\end{gather}\\\\
Como \begin{gather}\label{3}\overline{f_2}\preceq\overline{f_1},D_{\overline{f_2}}\subseteq D_{\overline{f_1}}\end{gather}\\\\
y $\overline{f_1}\diagup D_{\overline{f_2}}=\overline{f_2}.$\\\\
De ~\eqref{1} y ~\eqref{3}: $D_{\overline{f_1}}=D_{\overline{f_2}}.$\\\\
Segùn ~\eqref{2}, $\forall x\in D_{\overline{f_1}}= D_{\overline{f_2}}:\overline{f_2}(x)=\overline{f_1}(x).$\\\\
Esto dm. que $\overline{f_1}=\overline{f_2}.$\\\\
\item Supongamos ahora que $\overline{f_1}\preceq\overline{f_2}$ y $\overline{f_2}\preceq\overline{f_2}.$ Veamos que $\overline{f_1}\preceq\overline{f_3}.$\\\\
Dos cosas se deben probar: \begin{enumerate}
\item [i)] $D_{\overline{f_1}}\subset \overline{f_3}$\\
\item[ii)] $\overline{f_3}\diagup D_{\overline{f_1}}=\overline{f_1}.$ o que $\forall x\in D_{\overline{f_1}}, \overline{f_3}(x)=\overline{f_1}(x).$\\\\
\end{enumerate}
Como $\overline{f_1}\preceq\overline{f_2},D_{\overline{f_1}}\subseteq D_{\overline{f_2}}\\
\text{como}\,\,\overline{f_2}\preceq\overline{f_3},D_{\overline{f_2}}\subseteq D_{\overline{f_3}}\hspace{0.5cm}\therefore\hspace{0.5cm} D_{\overline{f_1}}\subseteq D_{\overline{f_3}}$ y se tiene i).\\\\
O sea que \begin{gather}\label{4}\forall x\in D_{\overline{f_1}}:\overline{f_2}(x)=\overline{f_1}(x)\end{gather}\\\\
Como $\overline{f_2}\preceq\overline{f_3},\overline{f_3}\diagup D_{\overline{f_2}}=\overline{f_2}$\\\\
i.e que $\forall y\in D_{\overline{f_2}}:\overline{f_3}(y)=\overline{f_2}(y)\hspace{0.5cm}\star$\\\\
Sea $x\in D_{\overline{f_1}}\Longrightarrow x\in D_{\overline{f_2}}\underset{\star}{\Longrightarrow}\overline{f_3}(x)=\overline{f_2}(x).$\\\\
Asì que \begin{gather}\label{5}\forall x\in D_{\overline{f_1}}:\overline{f_3}(x)=\overline{f_2}(x)\end{gather}\\\\
De ~\eqref{4} y ~\eqref{5} se concluye que $\forall x\in D_{\overline{f_1}}:\overline{f_3}(x)=\overline{f_1}(x)$ y se tiene ii).\\\\
Hemos dm. que $(S,\preceq)$ es un cjto. P.O.\\\\
Vìa aplicar el lema de Zorn, sea $\underset{\text{cjto. totalmente ordenado }}{\bigl\{\overline{f_\alpha}\bigr\}_{\alpha\in I}}\subset S.$\\\\
O sea que $\forall\alpha\in I,\overline{f_\alpha}$ es una extensiòn de f. Siendo $\bigl\{\overline{f_\alpha}\bigr\}_{\alpha\in I}$ totalmente ordenado, se tiene que $\forall\alpha_1,\alpha_2\in I,\overline{f_{\alpha_1}}\preceq\overline{f_{\alpha_2}}$ ò $\overline{f_{\alpha_2}}\preceq\overline{f_{\alpha_1}}$.\\\\
Vamos a demostrar que $\bigl\{\overline{f_\alpha}\bigr\}_{\alpha\in I}$ tiene cota superior en S, o que $\exists\overline{f}\in S$ tal que $\forall\alpha\in I:\overline{f_\alpha}\preceq f.$\\\\
Consideremos el cjto. $\underset{\alpha\in I}{\bigcup D}_{\overline{f_\alpha}}\subset X.$\\\\
Veamos que $\underset{\alpha\in I}{\bigcup D}_{\overline{f_\alpha}}$ es un subespacio de X.\\
\begin{enumerate}
\item [i)] Como $\forall\alpha\in I:M\subset D_{\overline{f_\alpha}},M\subset\underset{\alpha\in I}{\bigcup D}_{\overline{f_\alpha}}$ y por lo tanto $\underset{\alpha\in I}{\bigcup D}_{\overline{f_\alpha}}\neq\emptyset.$\\\\
\item[ii)] Sea $x\in\underset{\alpha\in I}{\bigcup D}_{\overline{f_\alpha}},\beta\in \mathbb{R}.$ Veamos que: $\beta x\in\underset{\alpha\in I}{\bigcup D}_{\overline{f_\alpha}}.\\\\ x\in\underset{\alpha\in I}{\bigcup D}_{\overline{f_\alpha}}\Rightarrow\exists\alpha_1\in I$ tal que $x\in D_{\overline{f_{\alpha_1}}}\hspace{0.5cm}\therefore\hspace{0.5cm}(\beta x)\in D_{\overline{f_{\alpha_1}}}$.\\\\
$(\beta x)\in D_{\overline{f_{\alpha_1}}}\Rightarrow(\beta x)\in\underset{\alpha\in I}{\bigcup D}_{\overline{f_\alpha}}.$\\\\
\item[iii)] Sean $x,y\in\underset{\alpha\in I}{\bigcup D}_{\overline{f_\alpha}}$. Veamos que $(x+y)\in\underset{\alpha\in I}{\bigcup D}_{\overline{f_\alpha}}.$\\\\
$x,y\in\underset{\alpha\in I}{\bigcup D}_{\overline{f_\alpha}}\Rightarrow\exists\alpha_1,\alpha_2\in I$ tal que $x\in D_{\overline{f_{\alpha_1}}}, y\in D_{\overline{f_{\alpha_2}}}.$ Pero $\bigl\{\overline{f_\alpha}\bigr\}_{\alpha\in I}\subset S.$ Entonces $\overline{f_{\alpha_1}}\preceq\overline{f_{\alpha_2}}$ ò $\overline{f_{\alpha_2}}\preceq\overline{f_{\alpha_1}},$ i.e, $D_{\overline{f_{\alpha_1}}}\subseteq D_{\overline{f_{\alpha_2}}}$ ò $D_{\overline{f_{\alpha_2}}}\subseteq D_{\overline{f_{\alpha_1}}}$\\\\
Supongamos que se da $D_{\overline{f_{\alpha_1}}}\subseteq D_{\overline{f_{\alpha_2}}}.$ Como $x\in D_{\overline{f_{\alpha_1}}},x\in D_{\overline{f_{\alpha_2}}}, y\in D_{\overline{f_{\alpha_2}}}\Longrightarrow (x+y)\in D_{\overline{f_{\alpha_2}}}\hspace{0.5cm}\therefore\hspace{0.5cm}(x+y)\in\underset{\alpha\in I}{\bigcup D}_{\overline{f_\alpha}}.$\\\\
Si $D_{\overline{f_{\alpha_2}}}\subseteq D_{\overline{f_{\alpha_1}}},\cdots\cdots$\\\\
Esto demuestra que $\underset{\alpha\in I}{\bigcup D}_{\overline{f_\alpha}}$ es un subespacio de X. Pasemos ahora a definir a $\overline{f}.$\\\\
Tomemos $x\in\underset{\alpha\in I}{\bigcup D}_{\overline{f_\alpha}}.$ Entonces $\exists\beta\in I$ tal que $x\in\underset{\alpha\in I}{\bigcup D}_{\overline{f_\beta}}\Longrightarrow\overline{f_\beta}(x)\in\mathbb{R}.$\\\\
Definamos \\
$\begin{diagram}
\node[2]{\overline{f}:\underset{\alpha\in I}{\bigcup D}_{\overline{f_\alpha}}}\arrow{e,t}\\
\node{\mathbb{R}}
\end{diagram}$\\
$\begin{diagram}
\node[2]{x}\arrow{e,t}\\
\node{\overline{f}(x)=\overline{f}_\beta(x)}
\end{diagram}$\\\\
Debemos dm. que $\overline{f}$ està bièn definida.\\\\
Supongamos que $\exists\alpha,\beta\in I$ tal que $x\in D_{\overline{f_\alpha}}$ y $x\in D_{\overline{f_\beta}}.$\\\\
Para establecer que $\overline{f}$ està bien definida debemos dm. que \begin{gather}\label{6}\overline{f_\alpha}(x)=\overline{f_\beta}(x)\end{gather}\\\\
Como $\alpha,\beta\in I,\overline{f_\alpha}\preceq\overline{f_\beta}$ ò $\overline{f_\beta}\preceq\overline{f_\alpha}.$\\\\
Veamos que en cualquier caso, $\overline{f_\alpha}(x)=\overline{f_\beta}(x).$\\\\
Suponagmos que $\overline{f_\alpha}\preceq\overline{f_\beta}.$\\\\
Entonces $D_{\overline{f_\alpha}}\subseteq D_{\overline{f_\beta}}$ y $\overline{f_\beta}\diagup D_{\overline{f_\alpha}}=\overline{f_\alpha}.$\\\\
O sea que $\forall y\in D_{\overline{f_\alpha}}:\overline{f_\beta}(y)=\overline{f_\alpha}(y).$\\\\
Como $x\in D_{\overline{f_\alpha}},\overline{f_\beta}(x)=\overline{f_\alpha}(x)$ y se tiene ~\eqref{6}.\\\\
Si se dice que $\overline{f_\beta}\preceq\overline{f_\alpha}$ se procede de manera anàloga. Esto dm. $\overline{f}$ està bien definida.\\\\
Veamos ahora que $\overline{f}\in S$ o que\\
\item[i)] $\overline{f}\diagup M=f$\\
\item[ii)] $\overline{f}\in\left(\underset{\alpha\in I}{\bigcup D}_{\overline{f_\alpha}}\right)'$\\
\item[iii)] $\|\overline{f}\|=\|f\|.$\\\\
\item Sea $x\in M.$ Veamos que: $\overline{f}=f(x).$\\\\
Como $x\in M\subset\underset{\alpha\in I}{\bigcup D}_{\overline{f_\alpha}},\exists\beta\in I$ tal que $x\in D_{\overline{f_\beta}}\Longrightarrow\overline{f}(x)=\overline{f_\beta}(x)=f(x)\\\\
\begin{cases}
S\in\overline{f_\beta} \,\,\text{y por lo tanto},\\\\
\overline{f_\beta}\diagup M=f.\,\,\text{Como} x\in M,\overline{f_\beta}(x)=f(x).
\end{cases}$\\\\
\item[ii)] Veamos $\overline{f}\in\left(\underset{\alpha\in I}{\bigcup D}_{\overline{f_\alpha}}\right)'.$ Primero veamos que ${\overline{f}\in\left(\underset{\alpha\in I}{\bigcup D}_{\overline{f_\alpha}}\right)}^{*},$ i.e, que f es A.L.\\\\
Sean $x,y\in\underset{\alpha\in I}{\bigcup D}_{\overline{f_\alpha}}\Longrightarrow\exists\beta,\varpi\in I$ tal que $x\in D_{\overline{f_\beta}}$ y $y\in D_{\overline{f_\varpi}}.$ Entonces $x+y\in\underset{\alpha\in I}{\bigcup D}_{\overline{f_\alpha}}$ ya que es un subespacio de X. Veamos que $\overline{f}(x+y)=\overline{f}(x)+\overline{f}(y)$.\\\\
Pero $\overline{f_\beta}\text{y $\overline{f_\varpi}$}\in\bigl\{\overline{f_\alpha}\bigr\}_{\alpha\in I}\subset S.$\\\\
Luego $D_{\overline{f_\beta}}\subseteq D_{\overline{f_\varpi}}$ ò $D_{\overline{f_\varpi}}\subseteq D_{\overline{f_\beta}}.$\\\\
Suponganos que $D_{\overline{f_\beta}}\subseteq D_{\overline{f_\varpi}}.$\\\\
Como $x\in D_{\overline{f_\beta}},x\in D_{\overline{f_\varpi}};y\in D_{\overline{f_\varpi}}\Longrightarrow(x+y)\in D_{\overline{f_\varpi}}.$\\\\
$\overline{f}(x+y)=\overline{f_\varpi}(x+y).\,\,{\overline{f_\varpi}\in\left(D_{\overline{f_\varpi}}\right)}^{*}=\overline{f_\varpi}(x)+\overline{f_\varpi}(y)=f(x)+f(y).$\\\\
Sea $x\in\underset{\alpha\in I}{\bigcup D}_{\overline{f_\alpha}}$ y $\lambda\in\mathbb{R}.\,\,x\in\underset{\alpha\in I}{\bigcup D}_{\overline{f_\alpha}}\Longrightarrow\exists\beta\in I$ tal que $x\in  D_{\overline{f_\beta}}:$ subespacio de X.$\hspace{0.5cm}\therefore\hspace{0.5cm}\Longrightarrow\lambda x\in D_{\overline{f_\beta}}\Longrightarrow\overline{f}(\lambda x)=\overline{f_\beta}(\lambda x)=\lambda\overline{f_\beta}(x)=\lambda\overline{f}(x).$\\\\
Veamos $\overline{f}$ es continua. Sea $x\in\underset{\alpha\in I}{\bigcup D}_{\overline{f_\alpha}}.$\\\\
Entonces $\exists\beta\in I$ tal que $x\in D_{\overline{f_\beta}}\Longrightarrow\overline{f}(x)\in\mathbb{R}$ y por la def. de $\overline{f}, f(x)=\overline{f_\beta}(x).\hspace{0.5cm}\therefore\hspace{0.5cm}\left|\overline{f}(x)\right|=\left|\overline{f_\beta}(x)\right|\leqslant\|\overline{f_\beta}\|\|x\|=\|f\|\|x\|$\\\\
Como $\overline{f_\beta}\in S,\|\overline{f_\beta}\|=\|f\|.$\\\\
Asì que $\forall x\in\underset{\alpha\in I}{\bigcup D}_{\overline{f_\alpha}},\left|\overline{f}(x)\right|\leqslant\|f\|\|x\|$ lo que dm. que $\overline{f}$ es continua.\\\\
Veamos ahora que $\|\overline{f}\|=\|f\|.$\\\\
$\|\overline{f}\|=\underset{\|x\|\leqslant1}{\sup\left|\overline{f}(x)\right|}.$\\\\
Sea $x\in\underset{\alpha\in I}{\bigcup D}_{\overline{f_\alpha}}$ con $\|x\|\leqslant1.$ Entonces, por lo establecido antes, $\exists\beta\in I$ tal que $\left|\overline{f}(x)\right|\leqslant\|f\|\|x\|\leqslant\|f\|.$\\\\
Esto dm. que el real $\|f\|$ es cota superior del cjto. $\left\{\left|\overline{f}(x)\right|,x\in\underset{\alpha\in I}{\bigcup D}_{\overline{f_\alpha}},\|x\|\leqslant1\right\}.$\\\\
Luego $\underset{\|x\|\leqslant1}{\sup\left|\overline{f}(x)\right|}\leqslant\|f\|$, i.e, $\|\overline{f}|\leqslant\|f\|.$\\\\
Veamos que $\|f\|\leqslant\|\overline{f}\|.$\\\\
Batarà con dm. que \begin{gather}\label{6}\left\{\left|\overline{f}(x)\right|,x\in M,\|x\|\leqslant1\right\}\subset\left\{\left|\overline{f}(x)\right|,x\in\underset{\alpha\in I}{\bigcup D}_{\overline{f_\alpha}},\|x\|\leqslant1\right\}\end{gather}\\\\
ya que el tomar el supremo se tendrà que $\underset{x\in M,\|x\|\leqslant1}{\sup\left|f(x)\right|}\leqslant\underset{x\in\underset{\alpha\in I}{\bigcup D}_{\overline{f_\alpha}}}{\sup\left|\overline{f}(x)\right|}$\\\\
i.e, $\|f\|\leqslant\|\overline{f}\|.$\\\\
Establezcamos pues, ~\eqref{6}.\\\\
Sea $y$ un vector en el primer cjto.\\\\
Entonces $\exists x\in M\Longrightarrow x\in\underset{\alpha\in I}{\bigcup D}_{\overline{f_\alpha}}$ con $\|x\|\leqslant1$ y tal que $y=\|f(x)\|$.\,\,$x\in\underset{\alpha\in I}{\bigcup D}_{\overline{f_\alpha}}\Longrightarrow\exists\beta\in I$ tal que $x\in D_{\overline{f_\beta}}\Longrightarrow\overline{f}(x)=\overline{f_\beta}(x)\Longrightarrow f(x)=\overline{f}(x).$ Pero $y=\left|f(x)\right|.$ Entonces $y=\left|\overline{f}(x)\right|.$\\\\
$\exists\beta\in I\Longrightarrow\overline{f_\beta}\in S.\hspace{0.5cm}\therefore\hspace{0.5cm}\overline{f_\beta}\diagup M=f.$ Como $x\in M,\overline{f_\beta}=f(x).$\\\\
Asì que $\exists x\in\underset{\alpha\in I}{\bigcup D}_{\overline{f_\alpha}}$ con $\|x\|\leqslant1$ tal que $y=\left|\overline{f}(x)\right|,$ lo que dm. que $y$ està en el $2^{do}$ cjto.\\\\
Veamos que $\forall\alpha\in I:\overline{f_\alpha}\preceq\overline{f}$ con lo que habremos dm. que $\left\{\overline{f_\alpha}\right\}_{\alpha\in I}$ tiene una cota superior en S.\\\\
Fijemos $\alpha\in I.$ Veamos que $\overline{f_\alpha}\preceq\overline{f}.$\\\\
Se debe probar que $D_{\overline{f_\beta}}\subseteq D_{\overline{f}}$ (esto es claro). y que $\overline{f}\diagup D_{\overline{f}}=\overline{f_\alpha},$ o que $\forall x\in D_{\overline{f_\alpha}},\overline{f}=\overline{f_\alpha}.$\\\\
Tomemos $x\in D_{\overline{f_\beta}}.$ Entonces, por la def. de $\overline{f},\overline{f}(x)=\overline{f_\alpha}(x).$\\\\
Hemos dm. hasta aquì que\\
$\left\{(S,\preceq):\text{cjto. P.O. y que} \forall\left(\overline{f_\alpha}\right)_{\alpha\in I},\\\\
\left\{\overline{f_\alpha}\right\}_{\alpha\in I}\,\,\text{tiene cota superior en S}.\right\}$\\\\
Luego, por el $\textit{Lema de Zorn,}$ S tiene elemento maximal,\,\,i.e,$\exists D_F:$ sub.vect. de X con $M\subset D_F$ y $\exists F\in(D_F)'$ i.e,\,\, $F:D_F\longrightarrow\mathbb{R}\diagup\text{A.L. Continua}$ tal que\\\\
$F\diagup M=f\\
\|F\|=\|f\|$\\\\
y $\nexists\overline{f}\in S$ tal que \begin{gather}\label{7}F\preceq\overline{f}\end{gather} o lo que es lo mismo, $\nexists\overline{f}\in S$ tal que $D_F\subset D_f$ y $\overline{f}\diagup D_F=F.$\\\\
Para completar la prueba del T.H.B. solo resta dm. que $D_F=X.$\\\\
Razonemos por R.Abs.\\\\
Supongamos que no es asì. Entonces $\exists x_0\in X,x_0 \notin D_F$ y por el $\text{Lema},\exists\overline{f}\in\left(Sg(D_F)\cup\bigl\{x_0\bigr\}\right)'=(D_F)'$ tal que $\overline{f}\diagup D_F=F$ y $\|\overline{f}\|=\|F\|.$\\\\
Veamos que $\overline{f}\in S$ y que $F\preceq\overline{f}$\\\\
lo que constituye una $(\rightarrow\leftarrow)$ con ~\eqref{7}\\\\
De esta manera $X=D_F$ y termina la dm.\\\\
Establezcamos pues ~\eqref{7}. Es claro que \begin{gather*}\overline{f}\in(D_F)'\\
M\subset D_F\\
\|\overline{f}\|=\|F\|=\|f\|\end{gather*}\\\\
Veamos que $\overline{f}\diagup M=f$ o que $\forall x\in M:\overline{f}(x)=f(x).$\\\\
$\forall x\in M\Longrightarrow F\diagup M=f. F(x)=f(x).$ Como $x\in M\subset D_F$ y $\overline{f}\diagup D_F=F,\overline{f}(x)=F(x).$\\\\
Esto dm. que $\overline{f}\in S.$\\\\
Finalmente, como $D_F\subset D_{\overline{f}}$ y $\overline{f}\diagup D_F=F$ se tiene que $F\preceq f$ y se tiene ~\eqref{7}.\\\\
\end{enumerate}
\end{enumerate}
\end{proof}
\section{$\mathbf{\text{Algunas consecuencias del T.H.B}}$}
Recordemos el enunciado del T.H.B.\\\\
"todo funcional continuo definido sobre un subespacio vectorial de un E.L.N se puede extender (prolongar) a todo el espacio conservando la norma":\\\\
$(X,\|\|):\mathbb{R}$ Esp. vec. normado. $\exists F\in X'=\mathcal{L}_c(X,\mathbb{R})$ tal que\\
\begin{enumerate}
\item $F\diagup M=f$\\
\item $\|F\|=\|f\|$\\\\
\end{enumerate}
ò lo que es lo mismo, $\underset{x\in X;\|x\|\leqslant1}{\sup\left|F(x)\right|}=\underset{x\in X;\|x\|\leqslant1}{\sup\left|f(x)\right|}$\\\\
\begin{con}
Sea $(X,\|\|):\mathbb{R}$ esp. vect. normado y sea $\underset{fijo}{\underbrace{x}}\in X, x\neq0.$ Entonces $\exists x'\in X$ tal que\\
\begin{enumerate}
\item $\|x'\|=1$\\
\item $\langle x,x'\rangle=\|x\|$\\
\end{enumerate}
\end{con}
\begin{proof}
Definamos\\

$\begin{diagram}
\node[2]{f:Sg\bigl\{x\bigr\}}\arrow{e,t}\\
\node{\mathbb{R}}
\end{diagram}$\\
$\begin{diagram}
\node[2]{\alpha x}\arrow{e,t}\\
\node{f(\alpha x)=\alpha\|x\|}
\end{diagram}$\\\\

Veamos que $f\in\left(Sg\bigl\{x\bigr\}\right)'=\mathcal{L}_c(Sg\bigl\{x\bigr\},\mathbb{R}).$\\\\
Tomemos $\alpha_1 x,\alpha_2 x\in Sg\bigl\{x\bigr\}.$\\\\
Entonces $f\left(\alpha_1 x+\alpha_2 x\right)=f\left((\alpha_1 +\alpha_2)\right)x\\\\
=(\alpha_1+\alpha_2)\|x\|=\alpha_1\|x\|+\alpha_2\|x\|=f(\alpha_1,x)+f(\alpha_2,x).$\\\\
Sea $\lambda\in\mathbb{R},(\alpha x)\in Sg\bigl\{x\bigr\}.$\\\\
$f\left(\lambda(\alpha x)\right)=f\left((\lambda\alpha)x\right)=\lambda\alpha\|x\|=\lambda f(\alpha x).$\\\\
Esto dm. que f es A.L.\\\\
Ahora, $\forall(\alpha x)\in Sg\bigl\{x\bigr\}:\\\\
\left|f(\alpha x)\right|=\left|\alpha\|x\|\right|=|\alpha|\|x\|=\|\alpha x\|=1.\|\alpha x \|,$ lo que dm. que f es continua y de este modo se tiene que $f\in\left(Sg\bigl\{x\bigr\}\right)'.$\\\\
Luego, por el T.H.B, $\exists x'\in X'$ i.e\\
$\begin{diagram}
\node[2]{\exists x':X}\arrow{e,t}\\
\node{\mathbb{R}}
\end{diagram}$\\
$\begin{diagram}
\node[2]{y}\arrow{e,t}\\
\node{\langle y,x'\rangle}
\end{diagram}$\\\\
tal que\\
\begin{enumerate}
\item $x'\diagup Sg\bigl\{x\bigr\}=f$\\
\item $\|x'\|=\|f\|$\\\\
\end{enumerate}
Segùn (1), $\forall\alpha x\in Sg\bigl\{x\bigr\}:\langle \alpha x,x'\rangle=f(\alpha x).$\\\\
Luego si $\alpha=1,\langle x,x'\rangle=f(1.x)=f(x)=\langle x,f\rangle=\|x\|$\\\\
De esta manera hemos dm que $\exists x'\in X'$ tal que $\langle x,x'\rangle=\|x\|.$\\\\
Veamos ahora que $\|x'\|=1\hspace{0.5cm}\star.$\\\\
Segùn (2), $\|x'\|=\|f\|.$ Luego para tener $\star$ veamos que $\|f\|=1.$\\\\
$\|f\|=\underset{\|\alpha x\|\leqslant1}{\sup\left|\langle \alpha x,f\rangle\right|}=\sup|\alpha|\left|\langle x,f\rangle\right|=\|x\|\sup|\alpha|=\|x\|\dfrac{1}{\|x\|}=1$\\
\end{proof}
\begin{con}
Sea $(X,\|\|):\mathbb{R}$ esp. vect. Normado, $F\subset X,F$ cerrado y $x\in X;x\notin F.$\\\\
$\exists x'\in X'$ tal que\\
\begin{enumerate}
\item $\langle x,x'\rangle=1$\\
\item $\forall\in F:\langle y,x'\rangle=0.$\\\\
\end{enumerate}
O sea que "si $F\subset(X,\|\|)$ y $x\notin F,$ hay un funcional continuo en X que vale 1 en x y se anula en F".
\end{con}
\begin{proof}
Sea $M=F+Sg\bigl\{x\bigr\}=\left\{y+\alpha x, y\in F,\alpha\in\mathbb{R}\right\}$\\\\
Es claro que M es un subespacio vect. de X.\\\\
Sea \\
$\begin{diagram}
\node[2]{f:M=F+Sg\bigl\{x\bigr\}=\left\{y+\alpha x, y\in F,\alpha\in\mathbb{R}\right\}}\arrow{e,t}\\
\node{\mathbb{R}}
\end{diagram}$\\
$\begin{diagram}
\node[3]{y+\alpha x}\arrow{e,t}\\
\node{f(y+\alpha x)=\alpha}
\end{diagram}$\\\\
f es A.L. ya que $f\left((y_1+\alpha_1 x)+(y_2+\alpha_2 x)\right)=f\left((y_1+y_2)+(\alpha_1+\alpha_2)x\right)\\
=\alpha_1+\alpha_2=f(y_1+\alpha_1 x)+f(y_1+\alpha_2x).\\\\
f\left(\lambda(y+\alpha x)\right)=f\left(\lambda y+(\lambda\alpha)x\right)=\lambda\alpha=\lambda f(y+\alpha x)$\\\\
Esto dm. que f es A.L.\\\\
Nòtese a demàs que de la def. de f,\begin{equation}\begin{split}\forall y\in F:f(y)=f(y+0.x)=0\\\\
\text{Ademàs},f(x)=f(0+1.x)=1\end{split}\end{equation}\\\\
Recordemos ahora el sgte. resultado de la topologìa:\\
$\begin{cases}
Sea A\subset(X,\|\|)\,\,\text{y$x\in X$}.\\
x\in\overline{A}\Longleftrightarrow d(x,A)=\underset{y\in A}{\inf\|x-y\|=0}
\end{cases}$\\\\
En nuestro caso $F\subset(X,\|\|)$ y $x\notin F.$ Luego $x\notin \overline{F}$ y por lo tanto $d(x,F)=\underset{y\in F}{\inf\|x-y\|}>0.$\\\\
O sea que \begin{gather}\label{8}\forall y\in F:\|x-y\|\geqslant d(x,F)>0\end{gather}\\\\
Tomemos $(y-\alpha x)\in M$ con $\alpha\neq0.$ Entones $\|y-\alpha x\|=\left\|\alpha\left(\frac{1}{\alpha}y-x\right)\right\|\\\\
=|\alpha|\left\|\frac{1}{\alpha}y-x\right\|\underset{~\eqref{8}}{\geqslant}|\alpha|\alpha(x,F)$\\\\
O sea que $|\alpha|\leqslant\dfrac{\|y-\alpha x\|}{d(x,F)}.$\\
\end{proof}
\section{Los espacios $\mathfrak{L}^p$}
\begin{defin}
Sea $\underset{fijo}{p}\geqslant 1.$ El espacio $\mathfrak{L}^p$ se define como $$\mathfrak{L}^p=\left\{x=(\xi_j)_{j=1}^{\infty}\in S(\mathbb{K})\,\,\text{tal que la serie de nùmeros reales}\,\,|\xi_1|^p+|\xi_2|^p+\ldots\ldots+\text{es convergente}\right\}$$\\\\
$=\left\{x=(\xi_j)_{j=1}^{\infty}\in S(\mathbb{K})\,\,\text{tal que}\sum\limits_{j=1}^{\infty}|\xi_j|^p<\infty\right\}$\\
\end{defin}
\begin{prop}
$\mathfrak{L}^p$ es un subespacio de $S(\mathbb{K})$.\\
\end{prop}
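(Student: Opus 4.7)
The plan is to verify the three standard subspace axioms: nonemptiness (or the presence of the zero vector), closure under scalar multiplication, and closure under vector addition. Since $\mathfrak{L}^p \subset S(\mathbb{K})$ by construction, it suffices to check these three conditions inside $S(\mathbb{K})$.

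First I would observe that the zero sequence $0 = (0,0,\ldots)$ satisfies $\sum_{j=1}^{\infty} |0|^p = 0 < \infty$, so $0 \in \mathfrak{L}^p$ and in particular $\mathfrak{L}^p \neq \emptyset$. Next, given $\alpha \in \mathbb{K}$ and $x = (\xi_j)_{j=1}^{\infty} \in \mathfrak{L}^p$, I would note that $|\alpha \xi_j|^p = |\alpha|^p |\xi_j|^p$, so
\[
\sum_{j=1}^{\infty} |\alpha \xi_j|^p \;=\; |\alpha|^p \sum_{j=1}^{\infty} |\xi_j|^p \;<\; \infty,
\]
which shows $\alpha x \in \mathfrak{L}^p$.

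The main obstacle is closure under addition, since for $p > 1$ it is not immediate that $\sum |\xi_j + \eta_j|^p$ converges. The clean trick to avoid invoking Minkowski (which would be circular here, as Minkowski is typically proved \emph{after} establishing that $\mathfrak{L}^p$ is a vector space) is the elementary estimate
\[
|\xi_j + \eta_j|^p \;\leqslant\; \bigl(|\xi_j| + |\eta_j|\bigr)^p \;\leqslant\; \bigl(2 \max\{|\xi_j|, |\eta_j|\}\bigr)^p \;\leqslant\; 2^p \bigl(|\xi_j|^p + |\eta_j|^p\bigr).
\]
Summing over $j$ and using the hypotheses $x = (\xi_j), y = (\eta_j) \in \mathfrak{L}^p$ yields
\[
\sum_{j=1}^{\infty} |\xi_j + \eta_j|^p \;\leqslant\; 2^p \sum_{j=1}^{\infty} |\xi_j|^p + 2^p \sum_{j=1}^{\infty} |\eta_j|^p \;<\; \infty,
\]
so $x+y \in \mathfrak{L}^p$. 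Combined with the previous two steps, this establishes that $\mathfrak{L}^p$ is a subspace of $S(\mathbb{K})$. The whole proof is short; the only substantive point is the power-of-a-sum estimate used to dominate $|\xi_j+\eta_j|^p$ by a constant multiple of $|\xi_j|^p + |\eta_j|^p$ term by term.
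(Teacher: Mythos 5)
Your proof is correct, but the treatment of closure under addition takes a genuinely different route from the paper's. The paper fixes $n$, applies the Minkowski inequality in the finite-dimensional space $(\mathbb{K}^n,\|\cdot\|_p)$ to obtain
\[
\Bigl(\sum_{i=1}^n|\xi_i+\eta_i|^p\Bigr)^{1/p}\leqslant\Bigl(\sum_{i=1}^n|\xi_i|^p\Bigr)^{1/p}+\Bigl(\sum_{i=1}^n|\eta_i|^p\Bigr)^{1/p},
\]
bounds each partial sum on the right by the corresponding infinite series, and concludes that the partial sums of $\sum|\xi_i+\eta_i|^p$ are bounded above, hence the series of nonnegative terms converges. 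You instead use the termwise estimate $|\xi_j+\eta_j|^p\leqslant 2^p\bigl(|\xi_j|^p+|\eta_j|^p\bigr)$ and sum. Both arguments are valid; yours is more elementary in that it needs no inequality beyond monotonicity of $t\mapsto t^p$, while the paper's yields as a byproduct the quantitative bound $\sum|\xi_i+\eta_i|^p\leqslant\bigl(\|x\|_p+\|y\|_p\bigr)^p$, which it reuses when proving the triangle inequality for $\|\cdot\|_p$ in the following proposition. One small correction: your worry about circularity is misplaced here, since the paper invokes Minkowski only in $\mathbb{K}^n$ (a finite-sum fact established independently), not the Minkowski inequality in $\mathfrak{L}^p$ itself; so the paper's argument is not circular, merely less self-contained than yours. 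Also note the paper additionally exhibits nonemptiness via sequences with a tail of zeros and via $\{1/j\}$ for $p>1$, whereas you simply use the zero sequence, which suffices.
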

\begin{proof}
\begin{enumerate}
\item Veamos que $\mathfrak{L}^p\neq\emptyset.$\\
Consideremos una sucesiòn cualquiera que tenga una cola de  infinitos ceros:\\$x=\left\{\xi_1,\xi_2,\ldots,\ldots,\xi_n,0,0,\ldots\ldots\right\}$\\
Es claro que $|\xi_1|^p+|\xi_2|^p+\ldots\ldots+$ converge a $\sum\limits_{j=1}^n|\xi_j|^p$ y por lo tanto $\mathfrak{L}^p\neq\emptyset.$\\
Otra forma de verlo es la siguiente:$\forall p>1,$ si $x=\left\{\dfrac{1}{j}\right\}_{j=1}^\infty\in S(\mathbb{R}),$ la serie $1+\frac{1}{2^p}+\frac{1}{3^p}+\ldots\ldots+$ converge. Luego $x=\left\{\dfrac{1}{j}\right\}_{j=1}^\infty\in\mathfrak{L}^p$ lo que dm. una vez màs que $\mathfrak{L}^p\neq\emptyset.$\\\\
\item Sea $x=\{\xi_j\}_{j=1}^\infty,y=\{\eta_j\}_{j=1}^\infty\in\mathfrak{L}^p,$ o sea que las sucesiones $|\xi_1|^p+|\xi_2|^p+\ldots\ldots+\,\,\text{converge a}\,\,\sum\limits_{i=1}^\infty|\xi_i|^p\\
|\eta_1|^p+|\eta_2|^p+\ldots\ldots+\,\,\text{converge a}\,\,\sum\limits_{i=1}^\infty|\eta_i|^p$\\
Veamos que $(x+y)\in\mathfrak{L}^p$ o que la serie $|\xi_1+\eta_1|^p+|\xi_2+\eta_2|^p+\ldots\ldots+\sum\limits_{i=1}^\infty|\xi_i+\eta_i|^p$\\\\
Fijemos n en $\mathbb{N}$.\\
Por la Desigualdad de Minkowski en el  ELN$(\mathbb{K}^n,\|\|_p)\diagup\|x+y\|_p\leqslant\|x\|_p+\|y\|_p$\\
\begin{gather}\label{9}\left(\sum\limits_{i=1}^n|\xi_i+\eta_i|^p\right)^{1/p}\leqslant\left(\sum\limits_{i=1}^n|\xi_i|^p\right)^{1/p}+\left(\sum\limits_{i=1}^n|\eta_i|^p\right)^{1/p}\end{gather}\\
Pero como $x\in\mathfrak{L}^p,\sum\limits_{i=1}^n|\xi_i|^p\leqslant\sum\limits_{i=1}^\infty|\xi_i|^p$\\\\
y como $y\in\mathfrak{L}^p,\sum\limits_{i=1}^n|\eta_i|^p\leqslant\sum\limits_{i=1}^\infty|\eta_i|^p$\\\\
$\Longrightarrow\left(\sum\limits_{i=1}^n|\xi_i|^p\right)^{1/p}+\left(\sum\limits_{i=1}^n|\eta_i|^p\right)^{1/p}\leqslant\left(\sum\limits_{i=1}^\infty|\xi_i|^p\right)^{1/p}+\left(\sum\limits_{i=1}^\infty|\eta_i|^p\right)^{1/p}$\\\\\\
que en ~\eqref{9} $\Longrightarrow\left(\sum\limits_{i=1}^n|\xi_i+\eta_i|^p\right)^{1/p}\leqslant\left(\sum\limits_{i=1}^n|\xi_i|^p\right)^{1/p}+\left(\sum\limits_{i=1}^n|\eta_i|^p\right)^{1/p}\\\\\\
\Longrightarrow\sum\limits_{i=1}^n|\xi_1+\eta_i|^p\leqslant{\left[\left(\sum\limits_{i=1}^\infty|\xi_i|^p\right)^{1/p}+\left(\sum\limits_{i=1}^\infty|\eta_i|^p\right)^{1/p}\right]}^p$\\\\
O sea que la sucesiòn de S. parciales de la de tèrminos positivos $|\xi_1+\eta_1|^p+|\xi_2+\eta_2|^p+\ldots\ldots+$ tiene una cota superior y por tanto converge.\\\\
\item tomemos $\lambda\in\mathbb{K}$ y $x=\bigl\{\xi_j\bigr\}_{j=1}^\infty\in\mathfrak{L}^p.$ Veamos que $(\lambda x)=\bigl\{\xi_j\bigr\}_{j=1}^\infty\in\mathfrak{L}^p.$\\
Debemos dm. que $|\lambda\xi_1|^p+|\lambda\xi_2|^p+\ldots\ldots+\sum\limits_{j=1}^\infty|\lambda\xi_i|^p$\\
Como $x=\bigl\{\xi_j\bigr\}_{j=1}^\infty\in\mathfrak{L}^p,|\xi_1|^p+|\xi_2|^p+\ldots\ldots+\sum\limits_{j=1}^\infty|\xi_i|^p\\
\Longrightarrow|\lambda\xi_1|^p+|\lambda\xi_2|^p+\ldots\ldots+|\lambda|^p\sum\limits_{j=1}^\infty|\xi_i|^p$\\
Luego, $\mathfrak{L}^p$ es un subespacio de $S(\mathbb{K})$ y por lo tanto $\mathfrak{L}^p$ es un $\mathbb{K}$ Esp. Vect.\\
\end{enumerate}
\end{proof}
\begin{prop}
Sea $p\geqslant1.$ La funciòn\\
$\begin{diagram}
\node[2]{\|\|_p:\mathfrak{L}^p}\arrow{e,t}\\
\node{\mathbb{R}}
\end{diagram}$\\
$\begin{diagram}
\node[2]{x=\bigl\{\xi_j\bigr\}_{j=1}^\infty}\arrow{e,t}\\
\node{{\|x\|_p=\left(\sum\limits_{j=1}^\infty|\xi_j|^p\right)}^{1/p}}
\end{diagram}$\\
es una norma en $\mathfrak{L}^p$ y por tanto $(\mathfrak{L}^p,\|\|_p):$ E.L.N\\
\end{prop}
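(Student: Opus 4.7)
El plan es verificar los cuatro axiomas de norma en el orden: no negatividad, definitud, homogeneidad absoluta, y desigualdad triangular. Los tres primeros son rutinarios y se siguen casi directamente de las propiedades correspondientes de $|\cdot|$ en $\mathbb{K}$. El trabajo verdadero est\'a en la desigualdad triangular, que es la desigualdad de Minkowski para series.

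Primero atacar\'ia la no negatividad: como $|\xi_j|^p \geqslant 0$ para cada $j$, la serie $\sum_{j=1}^\infty |\xi_j|^p$ es un real no negativo, y lo mismo ocurre con su ra\'iz $p$-\'esima. Para la definitud, si $\|x\|_p = 0$ entonces $\sum_{j=1}^\infty |\xi_j|^p = 0$, lo que fuerza $|\xi_j|^p = 0$ para cada $j$ (al ser todos los sumandos no negativos), de donde $\xi_j = 0$ para todo $j$, i.e., $x = 0$; el rec\'iproco es inmediato. Para la homogeneidad absoluta, dado $\lambda \in \mathbb{K}$ y $x = \{\xi_j\}_{j=1}^{\infty} \in \mathfrak{L}^p$, se calcula
\begin{equation*}
\|\lambda x\|_p = \left(\sum_{j=1}^\infty |\lambda \xi_j|^p\right)^{1/p} = \left(|\lambda|^p \sum_{j=1}^\infty |\xi_j|^p\right)^{1/p} = |\lambda|\,\|x\|_p,
\end{equation*}
factorizando $|\lambda|^p$ de la serie (absolutamente convergente por el inciso 3 de la Prop.\ anterior).

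El obst\'aculo principal es la desigualdad triangular, y la estrategia es la misma que ya se us\'o al probar que $\mathfrak{L}^p$ es subespacio: reducir al caso finito y pasar al l\'imite. Sean $x = \{\xi_j\}_{j=1}^{\infty}, y = \{\eta_j\}_{j=1}^{\infty} \in \mathfrak{L}^p$. Para cada $n \in \mathbb{N}$ fijo, la desigualdad de Minkowski en $(\mathbb{K}^n, \|\cdot\|_p)$ (ya invocada en ~\eqref{9}) da
\begin{equation*}
\left(\sum_{j=1}^n |\xi_j+\eta_j|^p\right)^{1/p} \leqslant \left(\sum_{j=1}^n |\xi_j|^p\right)^{1/p} + \left(\sum_{j=1}^n |\eta_j|^p\right)^{1/p} \leqslant \|x\|_p + \|y\|_p,
\end{equation*}
donde en la segunda desigualdad uso que las sumas parciales de una serie de t\'erminos no negativos est\'an dominadas por la suma total. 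Como la cota de la derecha es independiente de $n$, y como la Prop.\ anterior garantiza que $(x+y)\in\mathfrak{L}^p$ (es decir, la serie $\sum |\xi_j+\eta_j|^p$ converge), puedo tomar $n \to \infty$ en el lado izquierdo.

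El paso al l\'imite es el punto t\'ecnico delicado, pero es leg\'itimo: la sucesi\'on de sumas parciales $s_n = \sum_{j=1}^n |\xi_j+\eta_j|^p$ es mon\'otona no decreciente y acotada, luego $s_n \uparrow \sum_{j=1}^{\infty}|\xi_j+\eta_j|^p$; aplicando la funci\'on continua $t \mapsto t^{1/p}$ y usando que el l\'imite de una sucesi\'on acotada por debajo de una constante queda acotado por esa misma constante, se concluye
\begin{equation*}
\|x+y\|_p = \left(\sum_{j=1}^{\infty}|\xi_j+\eta_j|^p\right)^{1/p} \leqslant \|x\|_p + \|y\|_p,
\end{equation*}
que es lo que se quer\'ia. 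Esto cierra la verificaci\'on de los cuatro axiomas y prueba que $(\mathfrak{L}^p, \|\cdot\|_p)$ es un E.L.N.
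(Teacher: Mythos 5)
Tu demostraci\'on es correcta y sigue esencialmente el mismo camino que la del texto: los tres primeros axiomas por c\'alculo directo, y la desigualdad triangular reduciendo a la desigualdad de Minkowski en $(\mathbb{K}^n,\|\cdot\|_p)$, acotando las sumas parciales por $(\|x\|_p+\|y\|_p)^p$ y pasando al l\'imite. La \'unica diferencia menor es que el texto trata el caso $p=1$ por separado mediante el criterio de comparaci\'on, mientras que t\'u lo absorbes en el argumento general, lo cual es v\'alido puesto que la desigualdad de Minkowski finita tambi\'en vale para $p=1$.
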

\begin{proof}
\begin{enumerate}
\item Es claro que $\|x\|_p\geqslant0.$ Supongamos que $\|x\|_p=0$ entonces ${\left(\sum\limits_{j=1}^\infty|\xi_j|^p\right)}^{1/p}=0\Longrightarrow\bigl\{\xi_j\bigr\}=0\,\,\forall j,$ i.e, x es la sucesiòn cero.\\
\item Sea $x=\bigl\{\xi_j\bigr\}_{j=1}^\infty\in\mathfrak{L}^p,\alpha\in\mathbb{K}.$ Entonces $\alpha x=\bigl\{\alpha\xi_j\bigr\}_{j=1}^\infty\in\mathfrak{L}^p\Longrightarrow\sum\limits_{j=1}^\infty|\alpha\xi_j|^p$ converge a $|\alpha|\sum\limits_{j=1}^\infty|\xi_i|^p.\\\\
\|\alpha x\|={\left(\sum\limits_{j=1}^\infty|\alpha\xi_j|^p\right)}^{1/2}=|\alpha|{\left(\sum\limits_{j=1}^\infty|\xi_j|^p\right)}^{1/2}=|\alpha|\|x\|_p$\\
\item Supongamos $p=1$ y sean $x,y\in\mathfrak{L}^1.$ Veamos que $\|x+y\|_p\leqslant\|x\|_p+\|y\|_p.$\\\\
Entonces $|\xi_1|+|\xi_2|+\ldots+\ldots+$ converge a $\bigl\{x_j\bigr\}_{j=1}^\infty\\\\
|\eta_1|+|\eta_2|+\ldots\ldots+$ converge a $\bigl\{\eta_j\bigr\}$\\\\
Por la desigualdad triangular, $|\xi_j+\eta_j|\leqslant|\xi_j|+|\eta_j|$\\\\
Como $|\xi_1|+|\xi_2|+\ldots+\ldots$ converge y $|\eta_1|+|\eta_2|+\ldots\ldots+$ converge, la serie $(|\xi_1|+|\eta_1|)+(|\xi_2|+|\eta_2|)+\ldots\ldots+$ converge a $\left(\sum\limits_{j=1}^\infty|\xi_j|+\sum\limits_{j=1}^\infty|\eta_j|\right)$\\\\
Asì que $(|\xi_1|+|\eta_1|)+(|\xi_2|+|\eta_2|)+\ldots\ldots+$ converge y $|\xi_j+\eta_j|\leqslant|\xi_j|+|\eta_j|$ Luego por el criterio de comparaciòn, $|\xi_1+\eta_1|+|\xi_2+\eta_2|+\ldots\ldots$ converge y su suma es tal que:$\sum\limits_{j=i}^\infty|\xi_j+\eta_j|\leqslant\sum\limits_{j=1}^\infty|\xi_j|+\sum\limits_{j=1}^\infty|\eta_j|$ y se tiene asì la desigualdad $\Delta_r$ para $p=1.$\\\\
Sea ahora $p>1$ y consideremos las sucesiones $x,y\in\mathfrak{L}^p.$ Entonces $\bigl\{x+y\bigr\}\in\mathfrak{L}^p.$ Veamos que $\|x+y\|_p\leqslant\|x\|_p+\|y\|_p.$\\\\
Fijemos $n\in\mathbb{N}.$\\ Como $x=\bigl\{\xi_j\bigr\}_{j=1}^\infty\in\mathfrak{L}^p,\sum\limits_{j=1}^n|\xi|^p\leqslant\sum\limits_{j=1}^\infty|\xi|^p\Longrightarrow{\left(\sum\limits_{j=1}^n|\xi|^p\right)}^{1/2}\leqslant{\left(\sum\limits_{j=1}^n|\xi|^p\right)}^{1/2}=\|x\|_p$\\\\
Como $y=\bigl\{\eta_j\bigr\}_{j=1}^\infty\in\mathfrak{L}^p,\sum\limits_{j=1}^n|\eta|^p\leqslant\sum\limits_{j=1}^\infty|\eta|^p\Longrightarrow{\left(\sum\limits_{j=1}^n|\eta|^p\right)}^{1/2}\leqslant{\left(\sum\limits_{j=1}^n|\eta|^p\right)}^{1/2}=\|y\|_p$\\\\
Ahora por la Des. de Minkowski: ${\left(\sum\limits_{j=1}^n|\xi_i+\eta_j|^p\right)}^{1/2}\leqslant{\left(\sum\limits_{j=1}^\infty|\xi_j|\right)}^{1/2}+{\left(\sum\limits_{j=1}^\infty|\eta_j|\right)}^{1/2}\leqslant\|x\|_p+\|y\|_p$\\\\
e.i, $\sum\limits_{j=1}^n|\xi_i+\eta_i|^p\leqslant{\left(\|x\|_p+\|y\|_p\right)}^p$ i.e, la suc. de sumas parciales de la serie $|\xi_1+\eta_1|^p+|\xi_2+\eta_2|^p+\ldots\ldots+ $ està acotada superiormente. La serie $|\xi_1+\eta_1|^p+|\xi_2+\eta_2|^p+\ldots\ldots+$ converge y $\sum\limits_{j=1}^\infty|\xi_i+\eta_i|^p\leqslant{\left(\|x\|_p+\|y\|_p\right)}^{1/2}$ i.e $\|x+y\|_p\leqslant\|x\|_p+\|y\|_p$\\\\
\end{enumerate}
\end{proof}
\begin{prop}[$\textbf{\text{Desigualdad de H\"{o}lder en los $\mathfrak{L}^p$}}$ ]
Sean p,q exponentes conjungados, y sean $x=\bigl\{\xi_j\bigr\}_{j=1}^\infty\in\mathfrak{L}^p, y=\bigl\{\eta_j\bigr\}_{j=1}^\infty\in\mathfrak{L}^q.$\\\\
Entonces la serie $|\xi_1\eta_1|+|\xi_2\eta_2|+\ldots\ldots$ converge y $\sum\limits_{j=1}^\infty|\xi_l\eta_j|\leqslant\|x\|_p\|y\|_q.$\\
\end{prop}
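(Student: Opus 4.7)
El plan es reducir la desigualdad en $\mathfrak{L}^p$ a la desigualdad de H\"{o}lder ya conocida en el espacio finito-dimensional $(\mathbb{K}^n,\|\|_p)$, que es la misma herramienta que se us\'{o} para establecer Minkowski en la Prop. anterior, y enseguida pasar al l\'{i}mite usando que se trata de una serie de t\'{e}rminos no negativos.

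Primero, fijar\'{i}a $n\in\mathbb{N}$ y aplicar\'{i}a la desigualdad de H\"{o}lder finita a los vectores $(\xi_1,\ldots,\xi_n),(\eta_1,\ldots,\eta_n)\in\mathbb{K}^n$, obteniendo
$$\sum_{j=1}^n|\xi_j\eta_j|\;\leqslant\;\left(\sum_{j=1}^n|\xi_j|^p\right)^{1/p}\left(\sum_{j=1}^n|\eta_j|^q\right)^{1/q}.$$
Como $x\in\mathfrak{L}^p$ y $y\in\mathfrak{L}^q$, las sumas parciales de $\sum|\xi_j|^p$ y de $\sum|\eta_j|^q$ est\'{a}n dominadas por las respectivas series totales, luego
$$\sum_{j=1}^n|\xi_j\eta_j|\;\leqslant\;\left(\sum_{j=1}^\infty|\xi_j|^p\right)^{1/p}\left(\sum_{j=1}^\infty|\eta_j|^q\right)^{1/q}\;=\;\|x\|_p\|y\|_q.$$

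Esto dice que la sucesi\'{o}n de sumas parciales de la serie de t\'{e}rminos positivos $|\xi_1\eta_1|+|\xi_2\eta_2|+\ldots$ est\'{a} acotada superiormente por el real fijo $\|x\|_p\|y\|_q$. Por el criterio de acotaci\'{o}n de series de t\'{e}rminos no negativos, la serie converge, con
$$\sum_{j=1}^\infty|\xi_j\eta_j|\;=\;\lim_{n\to\infty}\sum_{j=1}^n|\xi_j\eta_j|\;\leqslant\;\|x\|_p\|y\|_q,$$
que es justamente la desigualdad buscada.

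El \'{u}nico punto delicado es asegurar que se disponga previamente de la desigualdad de H\"{o}lder en $(\mathbb{K}^n,\|\|_p)$, que aqu\'{i} usar\'{e} como resultado conocido ya que fue el insumo de la prueba de Minkowski en la Prop. anterior; el resto es pasar al l\'{i}mite en una serie mon\'{o}tona acotada, lo cual es rutinario. No se requiere suposici\'{o}n adicional sobre $p,q$ m\'{a}s all\'{a} de la relaci\'{o}n conjugada $\tfrac{1}{p}+\tfrac{1}{q}=1$, que es precisamente la hip\'{o}tesis que valida la desigualdad finita.
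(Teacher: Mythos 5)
Tu prueba es correcta, pero sigue una ruta genuinamente distinta de la del texto. Tú reduces el enunciado a la desigualdad de H\"{o}lder finita en $(\mathbb{K}^n,\|\|_p)$ aplicada a los vectores truncados, acotas las sumas parciales por $\|x\|_p\|y\|_q$ y concluyes por el criterio de acotaci\'{o}n para series de t\'{e}rminos no negativos; es exactamente el mismo esquema (truncar, aplicar la desigualdad finita, pasar al l\'{i}mite) que el texto emplea para Minkowski en $\mathfrak{L}^p$, as\'{i} que tu argumento es coherente con el estilo del escrito, aunque deja la desigualdad finita como insumo externo. El texto, en cambio, no pasa por el caso finito: normaliza las sucesiones definiendo $\overline{\xi_j}=\xi_j\big/\bigl(\sum_k|\xi_k|^p\bigr)^{1/p}$ y $\overline{\eta_j}=\eta_j\big/\bigl(\sum_k|\eta_k|^q\bigr)^{1/q}$, de modo que $\sum_j|\overline{\xi_j}|^p=\sum_j|\overline{\eta_j}|^q=1$, aplica la desigualdad de Young $|\overline{\xi_j}\overline{\eta_j}|\leqslant\frac{|\overline{\xi_j}|^p}{p}+\frac{|\overline{\eta_j}|^q}{q}$ t\'{e}rmino a t\'{e}rmino, y obtiene por comparaci\'{o}n que $\sum_j|\overline{\xi_j}\overline{\eta_j}|\leqslant\frac{1}{p}+\frac{1}{q}=1$, lo que al deshacer la normalizaci\'{o}n da la cota deseada. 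Tu versi\'{o}n es m\'{a}s corta si la desigualdad finita ya est\'{a} disponible y evita el caso degenerado $\|x\|_p=0$ \'{o} $\|y\|_q=0$ (donde la normalizaci\'{o}n del texto dividir\'{i}a por cero); la del texto es autocontenida a partir de la desigualdad elemental de Young y, de hecho, es la misma demostraci\'{o}n que dar\'{i}a la versi\'{o}n finita como caso particular. Ambas son v\'{a}lidas.
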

\begin{proof}
Como $x=\bigl\{\xi_j\bigr\}_{j=1}^\infty\in\mathfrak{L}^p,y=\bigl\{\eta_j\bigr\}_{j=1}^\infty\in\mathfrak{L}^q,\sum\limits_{j=1}^\infty|\xi_j|^p<\infty,\sum\limits_{j=1}^\infty|\eta_j|^q<\infty.$\\\\
Definamos las sucesiones:\\
\begin{gather}\label{10}\overline{\xi_j}=\dfrac{\xi_j}{{\left(\sum\limits_{k=1}^\infty|\xi_k|^p\right)}^{1/p}}\hspace{0.5cm} ,\hspace{0.5cm}\overline{\eta_j}=\dfrac{\eta_j}{{\left(\sum\limits_{k=1}^\infty|\eta_k|^q\right)}^{1/q}}\end{gather}\\\\
Entonces $|\overline{\xi_j}|^p=\dfrac{|\xi_j|^p}{\sum\limits_{k=1}^\infty|\xi_k|^p}\hspace{0.5cm},\hspace{0.5cm}|\overline{\eta_j}|^q=\dfrac{|\eta_j|^q}{\sum\limits_{k=1}^\infty|\eta_k|^q}$\\\\
Y las series $|\overline{\xi_1}|^p+|\overline{\xi_2}|^p+\ldots\ldots\\\\
|\overline{\eta_1}|^p+|\overline{\eta_2}|^p+\ldots\ldots$ convergen a 1. En efecto, $|\overline{\xi_1}|^p+|\overline{\xi_2}|^p=\dfrac{1}{\sum\limits_{k=1}^\infty|\xi_k|^p}\left(|\xi_1|^p+|\xi_2|^p+\ldots\ldots\right)=\dfrac{\sum\limits_{k=1}^\infty|\xi_k|^p}{\sum\limits_{k=1}^\infty|\xi_k|^p}=1\\\\
|\overline{\eta_1}|^q+|\overline{\eta_2}|^q\dfrac{1}{\sum\limits_{k=1}^\infty|\eta_k|^q}\left(|\eta_1|^q+|\eta_2|^p+\ldots\ldots\right)=\dfrac{\sum\limits_{k=1}^\infty|\eta_k|^q}{\sum\limits_{k=1}^\infty|\eta_k|^q}=1$\\\\
Ahora $|\overline{\xi_j}\overline{\eta_j}|\leqslant\dfrac{|\overline{\eta_j}|^p}{p}+\dfrac{|\overline{\eta_j}|^q}{q}$\\\\
Como la serie $\dfrac{1}{p}\sum\limits_{j=1}^\infty|\overline{\xi_j}|^p$ converge a $\dfrac{1}{p};$ y como la serie $\dfrac{1}{q}\sum\limits_{j=1}^\infty|\overline{\eta_j}|^q,$ converge a $\dfrac{1}{q}$ entonces por el criterio de comparaciòn, la serie $\sum\limits_{j=1}^\infty|\overline{\xi_j}\overline{\eta_j}|$ converge, y se tiene que su suma està acotada $\sum\limits_{j=1}^\infty|\overline{\xi_j}\overline{\eta_j}|\leqslant\dfrac{1}{p}\sum\limits_{j=1}^\infty|\overline{\xi_j}|^p+\dfrac{1}{q}\sum\limits_{j=1}^\infty|\overline{\eta_j}|^q=\dfrac{1}{p}+\dfrac{1}{q}=1$\\\\
y si se tiene encuenta ~\eqref{10}, $\dfrac{\sum\limits_{j=1}^\infty|\overline{\xi_j}\overline{\eta_j}|}{\|x\|_p.\|y\|_q}\leqslant1\Longrightarrow\sum\limits_{j=1}^\infty|\overline{\xi_j}\overline{\eta_j}|\leqslant\|x\|_p\|y\|_q$\\

\end{proof}
\begin{prop}
$\mathfrak{L}^2$ es completo y por lo tanto $\mathfrak{L}^2:$ E. Banach.\\
\end{prop}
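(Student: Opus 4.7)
El plan es mostrar que $\mathfrak{L}^2$ es de Banach probando que toda sucesi\'{o}n de Cauchy en $\mathfrak{L}^2$ converge en $\mathfrak{L}^2$. Tomemos una sucesi\'{o}n de Cauchy $\bigl\{x^{(n)}\bigr\}_{n=1}^\infty\subset\mathfrak{L}^2$, donde cada t\'{e}rmino es una sucesi\'{o}n $x^{(n)}=\bigl\{\xi_j^{(n)}\bigr\}_{j=1}^\infty$ en $\mathbb{K}$, y construyamos un candidato a l\'{i}mite $x=\bigl\{\xi_j\bigr\}_{j=1}^\infty$ coordenada por coordenada.

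Primero, para cada $j$ fijo y cualesquiera $m,n\in\mathbb{N}$,
$$|\xi_j^{(n)}-\xi_j^{(m)}|^2\leqslant\sum_{k=1}^\infty|\xi_k^{(n)}-\xi_k^{(m)}|^2=\|x^{(n)}-x^{(m)}\|_2^{\,2}.$$
Como $\bigl\{x^{(n)}\bigr\}$ es de Cauchy en $(\mathfrak{L}^2,\|\|_2)$, esto muestra que $\bigl\{\xi_j^{(n)}\bigr\}_n$ es de Cauchy en $\mathbb{K}$ para cada $j$ fijo, y por la completitud de $\mathbb{K}$ existe $\xi_j\in\mathbb{K}$ con $\xi_j^{(n)}\longrightarrow\xi_j$ cuando $n\to\infty$. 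Definimos as\'{i} $x=\bigl\{\xi_j\bigr\}_{j=1}^\infty\in S(\mathbb{K})$.

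Los pasos delicados (que $x\in\mathfrak{L}^2$ y que $x^{(n)}\to x$ en $\|\|_2$) se hacen simult\'{a}neamente. Dado $\varepsilon>0$, por ser $\bigl\{x^{(n)}\bigr\}$ de Cauchy, $\exists N\in\mathbb{N}$ tal que $\forall n,m>N:\|x^{(n)}-x^{(m)}\|_2<\varepsilon$. Fijemos un entero $K$ arbitrario; se tiene, por ser una suma finita,
$$\sum_{j=1}^K|\xi_j^{(n)}-\xi_j^{(m)}|^2\leqslant\|x^{(n)}-x^{(m)}\|_2^{\,2}<\varepsilon^2.$$
Manteniendo $n>N$ y $K$ fijos, pasamos $m\to\infty$; como se trata de una suma finita, el l\'{i}mite entra en la suma y usando la continuidad del m\'{o}dulo obtenemos $\sum_{j=1}^K|\xi_j^{(n)}-\xi_j|^2\leqslant\varepsilon^2$. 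Como esta cota es uniforme en $K$ y la sucesi\'{o}n de sumas parciales es no decreciente y acotada, concluimos que $\sum_{j=1}^\infty|\xi_j^{(n)}-\xi_j|^2\leqslant\varepsilon^2$. Esto dice que $(x^{(n)}-x)\in\mathfrak{L}^2$ y que $\|x^{(n)}-x\|_2\leqslant\varepsilon$ para todo $n>N$. Como $\mathfrak{L}^2$ es subespacio de $S(\mathbb{K})$ (Prop. previa), se sigue que $x=x^{(n)}-(x^{(n)}-x)\in\mathfrak{L}^2$, y la desigualdad anterior prueba que $x^{(n)}\overset{\|\|_2}{\longrightarrow}x$ en $\mathfrak{L}^2$.

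El obst\'{a}culo principal est\'{a} en justificar el doble paso al l\'{i}mite: la serie infinita $\sum_{j=1}^\infty|\xi_j^{(n)}-\xi_j^{(m)}|^2$ no admite intercambiar directamente $\lim_{m\to\infty}$ con la suma, as\'{i} que la t\'{e}cnica consiste en \emph{truncar} primero a $K$ t\'{e}rminos, pasar al l\'{i}mite en $m$ dentro de la suma finita (donde es trivial), y s\'{o}lo entonces usar la monoton\'{i}a de las sumas parciales para recuperar la cota de la serie completa haciendo $K\to\infty$. Este argumento de \textbf{truncar, pasar al l\'{i}mite, y extender} es el n\'{u}cleo del resultado, y de paso muestra en el mismo golpe las dos cosas necesarias: la pertenencia a $\mathfrak{L}^2$ y la convergencia en norma.
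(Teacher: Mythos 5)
Tu demostraci\'{o}n es correcta y sigue esencialmente el mismo camino que la del texto: extraer l\'{i}mites coordenada a coordenada usando la completitud de $\mathbb{K}$, truncar a sumas finitas para pasar al l\'{i}mite, y luego usar la monoton\'{i}a de las sumas parciales para recuperar la estimaci\'{o}n de la serie completa. La \'{u}nica variante menor es que t\'{u} deduces $x\in\mathfrak{L}^2$ escribiendo $x=x^{(n)}-(x^{(n)}-x)$ y apelando a la estructura de subespacio, mientras que el texto acota directamente las sumas parciales de $\sum|\xi_j|^2$ con la desigualdad triangular; ambas v\'{i}as son equivalentes y la tuya es, de hecho, algo m\'{a}s limpia en el manejo del doble paso al l\'{i}mite.
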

\begin{proof}
Sea $\bigl\{x_n\bigr\}_{n=1}^\infty:x_1,x_2,x_3,\ldots\ldots,$ una S. de Cauchy en $\mathfrak{L}^2$\\\\
i.e, $x_1=\left(\xi_1^1,\xi_2^1,\xi_3^1,\ldots\ldots,\xi_j^i,\ldots\right)$ S. de Cauchy en $\sum\limits_{j=1}^\infty|\xi_j^1|^2<\infty\\\\
x_2=\left(\xi_1^2,\xi_2^2,\xi_3^2,\ldots\ldots,\xi_j^2,\ldots\right)$ S. de Cauchy en $\sum\limits_{j=1}^\infty|\xi_j^2|^2<\infty\\\\
x_3=\left(\xi_1^3,\xi_2^3,\xi_3^3,\ldots\ldots,\xi_j^3,\ldots\right)$ S. de Cauchy en $\sum\limits_{j=1}^\infty|\xi_j^3|^2<\infty\\\\
\ldots\ldots$\\\\
Sea $\varepsilon>0.$ Como $\bigl\{x_n\bigr\}_{n=1}^\infty$ es una S. de Cauchy en $\mathfrak{L}^2,\exists N\in\mathbb{N}$ tal que $\forall m,n>N:{\left(\sum\limits_{j=1}^\infty|\xi_j^m-\eta_j^n|^2\right)}^{1/2}=\|x_m-x_n\|_2<\varepsilon$\\\\
i.e, $\forall m,n>N,\forall j=1,2,\ldots\ldots:|\xi_j^m-\eta_j^n|<\varepsilon^2$\\\\
o tambièn $\forall m,n>N,\forall j=1,2,\ldots\ldots:|\xi_j^m-\eta_j^n|<\varepsilon.$\\\\
Asì que si fijamos $j, j=1,2,3,\ldots\ldots$ la columna \begin{tabular}{c}
$\xi_j^1$\\
$\xi_j^2$\\
$\vdots$\\
$\xi_j^m$\\
$\xi_j^n$\\
$\downarrow$\\
$\xi_j$\\\\
\end{tabular}
es una S. de Cauchy en K y como K es Banach, las sucesiòn columna $j^a$ converge. Llamemos $\xi_j=\lim\limits_{m\longrightarrow\infty}\xi_j^m$\\\\
Esto permite que podamos definir la sucesiòn $x=\left(\xi_1,\xi_2,\ldots\ldots,\right)$ ò \\$x=\left(\lim\limits_{m\longrightarrow\infty}\xi_1^m,\lim\limits_{m\longrightarrow\infty}\xi_2^m,\ldots\ldots,\right)$\\\\
Vamos ahora a dm. que\\
\begin{enumerate}
\item $x=\bigl\{\xi_j\bigr\}_{j=1}^\infty\in\mathfrak{L}^2$\\
\item $\lim\limits_{m\longrightarrow\infty}=x$ con lo que quedarà establecido que $\mathfrak{L}^2$ es Banach.\\\\
\item[1] Fijemos $p\in\mathbb{N}.$\\
Sea $\varepsilon>0.$ Entonces $\dfrac{\varepsilon}{2\sqrt{p}}>0.$\\\\
Como $\xi_1^m\overset{m\rightarrow\infty}{\longrightarrow\xi_1},\exists N\in\mathbb{N}^1$ tal que $\forall m>N^1:|\xi_1^m-\xi_1|<\dfrac{\varepsilon}{2\sqrt{p}}\\\\
\xi_2^m\overset{m\rightarrow\infty}{\longrightarrow\xi_1},\exists N\in\mathbb{N}^2$ tal que $\forall m>N^2:|\xi_2^m-\xi_2|<\dfrac{\varepsilon}{2\sqrt{p}}\\\\
\vdots\\\\
\xi_p^m\overset{m\rightarrow\infty}{\longrightarrow\xi_p},\exists N\in\mathbb{N}^p$ tal que $\forall m>N^p:|\xi_p^m-\xi_p|<\dfrac{\varepsilon}{2\sqrt{p}}$\\\\
Asì que si escogemos $N_2>\max\bigl\{N^1,N^2,\ldots\ldots,N^p\bigr\}$ se tendrà que $\forall m>N_2:|\xi_j^m-\xi_j|<\dfrac{\varepsilon}{2\sqrt{p}},j=1,2,\ldots\ldots,p$ y al colacar y sumar sobre j de a p, $\left(\sum\limits_{j=1}^p|\xi_m^j-\xi_j|^2\right)<\dfrac{\varepsilon^2}{4}$ siempre que $m>N_2$ i.e, $\exists N_2\in\mathbb{N}$ tal que $\forall n>N_2:{\left(\sum\limits_{j=1}^p|\xi_m^j-\xi_j|^2\right)}^{1/2}<\dfrac{\varepsilon}{2}$\\\\
Por lo tanto, si $\underset{fijo}{m}>N_2\\\\
\left\|(\xi_1,\xi_2,\ldots,\xi_p)\right\|_2=\left\|(\xi_1,\xi_2,\ldots,\xi_p)-(\xi_1^m,\xi_2^m,\ldots,\xi_p^m)+(\xi_1^m,\xi_2^m,\ldots,\xi_p^m)\right\|_2\\\\
\leqslant\left\|(\xi_1^m-\xi_1,\ldots\ldots,\xi_p^m-\xi_p)\right\|_2+\left\|(\xi_1^m,\ldots\ldots,\xi_p^m)\right\|_2$\\\\
i.e, ${\left(\sum\limits_{j=1}^p|\xi_j|^2\right)}^{1/2}\leqslant{\left(\sum\limits_{j=1}^p|\xi_j^m-\xi_j|^2\right)}^{1/2}+{\left(\sum\limits_{j=1}^p|\xi_j^m|^2\right)}^{1/2}<\dfrac{\varepsilon}{2}+\|x_m\|_2\\\\
\Longrightarrow\sum\limits_{j=1}^2|\xi_j|^2\leqslant\left(\dfrac{\varepsilon}{2}+\|x_m\|_2\right)^2,\forall p\in\mathbb{N}.$\\\\
lo cual significa que es cota superior de la S. Sumas parciales de la serie $|\xi_1|^2+|\xi_2|^2+\ldots\ldots$ O sea que la serie $|\xi_1|^2+|\xi_2|^2+\ldots\ldots$ converge, i.e, $x=\bigl\{\xi_j\bigr\}_{j=1}^\infty\in\mathfrak{L}^2.$\\\\
\item [2.] Veamos finalmente que $\lim\limits_{m\rightarrow\infty}x_m=x,$ o que $\forall \epsilon>0\,\,\exists N\in\mathbb{N}$ tal que $\forall n>N:\|x_m-x\|_2<\epsilon,$ i.e, ${\left(\sum\limits_{j=1}^\infty|\xi_j^m-\xi_j|^2\right)}^{1/2}<\epsilon$ siempre que $m>N.$\\\\
Segùn (1), dado $\epsilon>0,\exists N\in\mathbb{N}$ tal que $\forall m>N, p\in\mathbb{N},{\left(\sum\limits_{j=1}^\infty|\xi_j^m-\xi_j|^2\right)}^{1/2}<\epsilon\Longrightarrow\left(\sum\limits_{j=1}^\infty|\xi_j^m-\xi_j|^2\right)<\epsilon^2,\forall p\in\mathbb{N}\Longrightarrow{\left(\sum\limits_{j=1}^\infty|\xi_j^m-\xi_j|^2\right)}^{1/2}<\epsilon$ siempre que $m>N.$\\
\end{enumerate}
\end{proof}
\begin{prop}
El dual de $\mathfrak{L}^p$ es $\mathfrak{L}^q,$ o sea que $\mathfrak{L}^{p'}:$ el dual de $\mathfrak{L}^p$ es isomètricamente isomorfo a $\mathfrak{L}^q.$\\
\end{prop}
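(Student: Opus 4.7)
El plan es construir un isomorfismo isom\'{e}trico expl\'{i}cito $T:\mathfrak{L}^q\longrightarrow\mathfrak{L}^{p'}$. Para cada $y=\bigl\{\eta_j\bigr\}_{j=1}^\infty\in\mathfrak{L}^q$ definimos $T(y):\mathfrak{L}^p\longrightarrow\mathbb{K}$ por $T(y)(x)=\sum_{j=1}^{\infty}\xi_j\eta_j$, donde $x=\bigl\{\xi_j\bigr\}_{j=1}^\infty\in\mathfrak{L}^p$. La Desigualdad de H\"{o}lder ya establecida garantiza que la serie converge absolutamente y que $|T(y)(x)|\leqslant\|x\|_p\|y\|_q$, de donde $T(y)\in\mathfrak{L}^{p'}$ y $\|T(y)\|\leqslant\|y\|_q$. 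La linealidad de $T(y)$ en $x$ y la de $T$ en $y$ son inmediatas.

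Para la inyectividad, si $T(y)=0$ basta evaluar en la sucesi\'{o}n can\'{o}nica $e_k\in\mathfrak{L}^p$ (con 1 en la posici\'{o}n $k$ y ceros en las dem\'{a}s): se obtiene $\eta_k=T(y)(e_k)=0$ para todo $k\in\mathbb{N}$, y por lo tanto $y=0$.

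La parte principal y t\'{e}cnicamente m\'{a}s delicada es la sobreyectividad. Dado $f\in\mathfrak{L}^{p'}$, definimos $\eta_k=f(e_k)$ y debemos ver que $y=\bigl\{\eta_k\bigr\}\in\mathfrak{L}^q$ y que $T(y)=f$. Fijado $n\in\mathbb{N}$, consideramos la sucesi\'{o}n test $x^{(n)}$ de soporte finito cuya $k$-\'{e}sima coordenada es $|\eta_k|^{q-1}\overline{\eta_k}/|\eta_k|$ si $k\leqslant n$ y $\eta_k\neq0$, y cero en los dem\'{a}s casos. Usando la relaci\'{o}n $p(q-1)=q$ entre los exponentes conjugados, un c\'{a}lculo directo produce $\|x^{(n)}\|_p=\bigl(\sum_{k=1}^n|\eta_k|^q\bigr)^{1/p}$ y $f(x^{(n)})=\sum_{k=1}^n|\eta_k|^q$. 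Combinando con $|f(x^{(n)})|\leqslant\|f\|\|x^{(n)}\|_p$ y reordenando se llega a $\bigl(\sum_{k=1}^n|\eta_k|^q\bigr)^{1/q}\leqslant\|f\|$ para todo $n$, de donde $y\in\mathfrak{L}^q$ y $\|y\|_q\leqslant\|f\|$. La igualdad $T(y)=f$ resulta entonces de la densidad de las sucesiones con soporte finito en $\mathfrak{L}^p$: si $s^{(n)}=(\xi_1,\ldots,\xi_n,0,\ldots)$ es la truncada de $x\in\mathfrak{L}^p$, entonces $s^{(n)}\longrightarrow x$ en $\mathfrak{L}^p$, de modo que por continuidad de $f$, $f(x)=\lim_n f(s^{(n)})=\lim_n\sum_{k=1}^n\xi_k\eta_k=T(y)(x)$.

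La isometr\'{i}a se obtiene combinando $\|T(y)\|\leqslant\|y\|_q$ (de H\"{o}lder) y $\|y\|_q\leqslant\|f\|=\|T(y)\|$ (del argumento anterior aplicado a $f=T(y)$). El obst\'{a}culo principal es precisamente la elecci\'{o}n de la sucesi\'{o}n test $x^{(n)}$ en la sobreyectividad: la combinaci\'{o}n $|\eta_k|^{q-1}\overline{\eta_k}/|\eta_k|$ no es arbitraria, pues es la \'{u}nica que, v\'{i}a $p(q-1)=q$, hace encajar las normas y permite convertir la cota abstracta $|f(x^{(n)})|\leqslant\|f\|\|x^{(n)}\|_p$ en informaci\'{o}n cuantitativa sobre los $\eta_k$. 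La densidad mencionada es lo que garantiza que el funcional $f$ queda un\'{i}vocamente determinado por sus valores sobre $\bigl\{e_k\bigr\}_{k=1}^\infty$, y por ello esta estrategia funciona para $1\leqslant p<\infty$.
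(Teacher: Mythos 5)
Tu propuesta es correcta y sigue esencialmente el mismo camino que el texto: la desigualdad de H\"{o}lder para definir el operador y acotar $\|T(y)\|\leqslant\|y\|_q$, las sucesiones test de soporte finito con coordenadas $|\eta_k|^{q-1}\overline{\eta_k}/|\eta_k|$ (id\'{e}nticas a las $|\alpha_k|^{q-1}e^{-i\theta_k}$ del texto) junto con la relaci\'{o}n $p(q-1)=q$ para la sobreyectividad y la desigualdad $\|y\|_q\leqslant\|f\|$, y la aproximaci\'{o}n por truncadas para identificar $T(y)=f$. La \'{u}nica diferencia es cosm\'{e}tica (tu exponente $1/q$ est\'{a} escrito correctamente donde el texto tiene una errata), as\'{i} que no hay nada que objetar.
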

\begin{proof}
Sea\\
$\begin{diagram}
\node[2]{\phi:\mathfrak{L}^q}\arrow{e,t}\\
\node{\mathfrak{L}^{p'}}
\end{diagram}$\\
$\begin{diagram}
\node[2]{f}\arrow{e,t}\\
\node{\phi(f):\mathfrak{L}^p\longrightarrow\mathbb{K}}
\end{diagram}$\\
$\begin{diagram}
\node[3]{x}\arrow{e,t}\\
\node{\phi_f(x)=\sum\limits_{n=1}^\infty\alpha_n\xi_n}
\end{diagram}$\\\\
Sea $f=\bigl\{\alpha_n\bigr\}_{n=1}^\infty\in\mathfrak{L}^q.$ Veamos que $\phi(f)$ està bien definida. En primer lugar, por la Desigualdad de H\"{o}lder:$\sum\limits_{j=1}^n|\alpha_j\xi_j|\leqslant{\left(\sum\limits_{j=1}^n|\alpha_j|^q\right)}^{1/q}{\left(\sum\limits_{j=1}^n|\xi_j|^p\right)}^{1/p}\leqslant\|f\|_q\|x\|_p$\\\\
Lo que nos dm. que la serie $\sum\limits_{n=1}^\infty\alpha_n\xi_n$ es ABS. Convergente, y por tanto la serie $\sum\limits_{n=1}^\infty\alpha_n\xi_n$ converge. Esto prueba que $\phi(f)$ està bien definida, tenièndose ademàs que \begin{gather}\label{11}\left|\sum\limits_{n=1}^\infty\alpha_n\xi_n\right|\leqslant\sum\limits_{n=1}^\infty|\alpha_n\xi_n|\leqslant\|f\|_q\|x\|_p\end{gather}\\\\
Sea $x=\bigl\{\xi_n\bigr\}_{n=1}^\infty,y=\bigl\{\beta_n\bigr\}_{n=1}^\infty\in\mathfrak{L}^p.$ Veamos que $\phi_f(x+y)=\phi_f(x)+\phi_f(y).$\\\\
$x+y=\bigl\{\xi_n+\beta_n\bigr\}_{n=1}^\infty\in\mathfrak{L}^p$ y por tanto $\phi_f(x+y)=\sum\limits_{n=1}^\infty\alpha_n(\xi_n+\beta_n)=\sum\limits_{n=1}^\infty\alpha_n\xi_n+\sum\limits_{n=1}^\infty\alpha_n\beta_n=\phi_f(x)+\phi_f(y).$\\\\
Esto dm. que $\phi_f\in\mathfrak{L}^{p*}:\text{dual algebraico de $\mathfrak{L}^p$}.$\\\\
Segùn ~\eqref{11} \begin{gather}\forall x\in\mathfrak{L}^p:\left|\phi_f(x)\right|=\left|\sum\limits_{n=1}^\infty\alpha_n\xi_n\right|\leqslant\sum\limits_{n=1}^\infty|\alpha_n\xi_n|\leqslant\|f\|_q\|x\|_p\end{gather}\\\\
lo que dm. que $\forall f\in\mathfrak{L}^q:\phi_f$ es continua, i.e, $\forall f\in\mathfrak{L}^q:\phi_f\in\mathfrak{L}^{p'}$\\\\
Asì que\\
$\begin{diagram}
\node[2]{\phi:\mathfrak{L}^q}\arrow{e,t}\\
\node{\mathfrak{L}^{p'}}
\end{diagram}$\\
$\begin{diagram}
\node[2]{f}\arrow{e,t}\\
\node{\phi_f}
\end{diagram}$\\\\
està bien definida.\\
Veamos ahora que $\phi$ es A.L.\\\\
Sean $f=\bigl\{\alpha_n\bigr\}_{n=1}^\infty,g=\bigl\{\theta_n\bigr\}_{n=1}^\infty\in\mathfrak{L}^q.$ Veamos que \begin{gather}\label{12}\phi_{(f+g)}=\phi_f+\phi_g\end{gather}\\\\
$f+g=\bigl\{\alpha_n+\theta_n\bigr\}_{n=1}^\infty\in\mathfrak{L}^q$ y por tanto $\phi_{f+g}\in\mathfrak{L}^{p'}.$\\\\
Tomemos $x=\bigl\{\xi_n\bigr\}_{n=1}^\infty\in\mathfrak{L}^p.$ Para dm, ~\eqref{12} bastarà con probar que $\phi_{f+g}(x)=\phi_f(x)+\phi_g(x)$\\\\
$\phi_{f+g}(x)=\sum\limits_{n=1}^\infty(\alpha_n+\theta_n)\xi_n=\sum\limits_{n=1}^\infty\alpha_n\xi_n+\sum\limits_{n=1}^\infty\theta_n\xi_n=\phi_f(x)+\phi_g(x).$\\ Esto dm. que $\phi\in\mathcal{L}\left(\mathfrak{L}^q,\mathfrak{L}^{p'}\right)$\\\\
Veamos ahora que $\phi$ es continua.\\
$\forall f\in\mathfrak{L}^q,\phi_f\mathfrak{L}^{p'},\\
\begin{diagram}
\node[2]{\phi_f:\mathfrak{L}^p}\arrow{e,t}\\
\node{\mathbb{K}}
\end{diagram}$\\
$\begin{diagram}
\node[2]{x}\arrow{e,t}\\
\node{\phi_f(x)}
\end{diagram}$\\
Debemos dm. que $\exists M>0$ tal que $\forall f\in\mathfrak{L}^q:\|\phi_f\|\leqslant M\|f\|_q$\\\\
Sea $f\in\mathfrak{L}^q.$ Entonces $\phi_f\in\mathfrak{L}^{p'}$ y \begin{gather}\label{13}\|\phi_f\|=\sup\limits_{\|x\|_p\leqslant1}\left|\phi_f(x)\right|=\sup\limits_{\|x\|\leqslant1}\|f\|_q\|x\|_q=\|f\|_q\end{gather}\\\\
$\left|\phi_f(x)\right|\leqslant\|f\|_q\|x\|_p.$\\\\
Asì que \begin{gather}\label{14}\forall f\in\mathfrak{L}^q:\|\phi_f\|\leqslant1\|f\|_q\end{gather}\\\\
lo que nos dm. que $\phi\in\mathcal{L}_c\left(\mathfrak{L}^q,\mathfrak{L}^{p'}\right).$\\\\
Veamos que $\phi$ es sobre y que \begin{gather}\label{15}\|\phi_f\|\geqslant\|f\|_q\end{gather}\\\\
Una vez establezcamos lo anterior, de ~\eqref{14} y ~\eqref{15} se concluye que $\|\phi_f\|=\|f\|$ y por tanto $\phi$ es isometrìa. Luego $\phi$ es A.L. biyectiva y continua y por tanto $\mathfrak{L}^q=\mathfrak{L}^{p'}$(isomorfismo isomètrico)\\\\
Sea $\overline{f}\in\mathfrak{L}^{p'}.$ Veamos que: $\exists f\in\mathfrak{L}^p$ tal que $\phi(f)=\overline{f}.$\\
$\begin{diagram}
\node[2]{\overline{f}:\mathfrak{L}^p}\arrow{e,t}\\
\node{\mathbb{K}}
\end{diagram}$\\
$\begin{diagram}
\node[2]{x}\arrow{e,t}\\
\node{\overline{f}(x)}
\end{diagram}$\\\\
es A.L. continua.\\\\
Como $\forall n\in\mathbb{N}:e_n(0,0,\ldots,1,0,\ldots)\in\mathfrak{L}^p,\overline{f}(e_n)\in\mathbb{K}.$\\\\
\begin{gather}\label{16}\overline{f}(e_n)=\alpha_n\in\mathbb{K}=|\alpha_n|e^{i\theta_n}\diagup0\leqslant\theta\leqslant{360}^\circ\end{gather}\\
los $|\alpha_n|$ y $\theta_n$ conocidos.\\\\
Fijemos $m\in\mathbb{N}$ y definamos $\beta_k=|\alpha_k|^{q-1}e^{-i\theta_k},k=1,\ldots,m\Longrightarrow|\beta_k|=|\alpha_k|^{q-1}$\\\\
Consideremos ahora la sucesiòn:\\
$\bigl\{w_n\bigr\}_{n=1}^\infty=(\beta_1,\beta_2,\ldots\ldots,\beta_m,0,0,\ldots\ldots)=\left(|\alpha_1|^{q-1}e^{-i\theta_1},|\alpha_2|^{q-1}e^{-i\theta_2},\ldots\ldots,|\alpha_m|^{q-1}e^{-i\theta_m},0,0,\ldots\ldots\right)\\\\
=\left(|\alpha_1|^{q-1}e^{-i\theta_1}\right)_{e_1}+\left(|\alpha_2|^{q-1}e^{-i\theta_2}\right)_{e_2}+\ldots\ldots+\left(|\alpha_m|^{q-1}e^{-i\theta_m}\right)_{e_m}$\\\\
Es claro que $\bigl\{w_n\bigr\}_{n=1}^\infty\in\mathfrak{L}^p$ ya que tiene una cola de ceros. \\Luego $\overline{f}_{(w_n)}=|\alpha_1|^{q-1}e^{-i\theta_1}\overline{f}_{e_1}+|\alpha_2|^{q-1}e^{-i\theta_2}\overline{f}_{e_2}+\ldots\ldots,|\alpha_m|^{q-1}e^{-i\theta_m}\overline{f}_{e_m}\\\\
~\eqref{16}=|\alpha_1|^{q-1}e^{-i\theta_1}|\alpha_1|e^{i\theta_1}+|\alpha_2|^{q-1}e^{-i\theta_2}|\alpha_2|e^{i\theta_2}+\ldots\ldots+|\alpha_m|^{q-1}e^{-i\theta_m}|\alpha_m|e^{i\theta_m}\\\\
=|\alpha_1|^q+\ldots\ldots+|\alpha_m|^q=\sum\limits_{k=1}^\infty|\alpha_k|^q$\\\\
Como $\overline{f}\in\mathfrak{L}^{p'},\text{y $\bigl\{w_n\bigr\}\in\mathfrak{L}^p$};\left|\overline{f}_{(w_n)}\right|\leqslant\left\|\overline{f}\right\|\left\|\bigl\{w_n\bigr\}\right\|_p$\\\\
O sea que $\sum\limits_{k=1}^m|\alpha_k|^q\leqslant\|\overline{f}\|{\left(\sum\limits_{k=1}^m|\beta_k|^p\right)}^{1/p}\\\\
=\|\overline{f}\|{\left(\sum\limits_{k=1}^m{\left(|\alpha_k|^{q-1}\right)}^p\right)}^{1/p}=\|\overline{f}\|{\left(\sum\limits_{k=1}^m|\alpha_k|^q\right)}^{1/q}\\\\\\
\frac{1}{p}+\frac{1}{q}=1\\\\
p+q=pq\\\\
(q-1)p=pq-p=p+q-p=q\hspace{0.5cm}\therefore\hspace{0.5cm}{\left(\sum\limits_{k=1}^m|\alpha_k|^q\right)}^{1-\frac{1}{p}}\leqslant\|\overline{f}\|$\\\\
i.e ${\left(\sum\limits_{k=1}^m|\alpha_k|^q\right)}^{\frac{1}{p}}\leqslant\|\overline{f}\|,$ cualqiuera sea $m\in\mathbb{N}$ lo cual significa que $f=\bigl\{\alpha_n\bigr\}_{n=1}^\infty\in\mathfrak{L}^q$ y que \begin{gather}\label{17}\|f\|_q\leqslant\|\overline{f}\|\end{gather}\\\\
Veamos ahora que \begin{gather}\label{18}\phi(f)=\overline{f}\end{gather}\\\\
Como $f=\bigl\{\alpha_n\bigr\}_{n=1}^\infty\in\mathfrak{L}^q,{\left(\sum\limits_{n=1}^\infty|\alpha_n|^q\right)}^{1/q}=\|f\|_q$\\\\
$\begin{diagram}
\node[2]{\mathfrak{L}^{p'}\ni\phi_f:\mathfrak{L}^p}\arrow{e,t}\\
\node{\mathbb{K}}
\end{diagram}$\\
$\begin{diagram}
\node[2]{x}\arrow{e,t}\\
\node{\phi_f(x)=\sum\limits_{n=1}^\infty\alpha_n\xi_n}
\end{diagram}$\\\\
$\begin{diagram}
\node[2]{\mathfrak{L}^{p'}\in\overline{f}:\mathfrak{L}^p}\arrow{e,t}\\
\node{\mathbb{K}}
\end{diagram}$\\
$\begin{diagram}
\node[2]{e_n}\arrow{e,t}\\
\node{\overline{f}_{(e_n)}=\alpha_n}
\end{diagram}$\\\\
Para obtener ~\eqref{18} debemos dm. que $\forall x=\bigl\{\xi_n\bigr\}_{n=1}^\infty\in\mathfrak{L}^p:\phi_f(x)=\overline{f}(x).$\\\\
Tomemos $x=\bigl\{\xi_n\bigr\}_{n=1}^\infty\in\mathfrak{L}^p$ Entonces $\phi_f(x)=\sum\limits_{n=1}^\infty\alpha_n\xi_n$\\\\
Como $x=\bigl\{\xi_n\bigr\}_{n=1}^\infty\in\mathfrak{L}^p,S_m=\sum\limits_{n=1}^m|\xi_n|^p\overset{m\rightarrow\infty}{\longrightarrow}\sum\limits_{n=1}^\infty|\xi_n|^p$\\\\
Sea $\epsilon>0.$ Entonces $\exists M\in\mathbb{N}$ tal que $\forall m>M:\left|S_m-\sum\limits_{n=1}^\infty|\xi_n|^p\right|<{\epsilon}^p,$ i.e, $\sum\limits_{n=m+1}^\infty|\xi_n|^p<{\epsilon}^p$ siempre que $m>M.$ O sea que \begin{gather}\label{19}\|x-x_m\|_p={\left(\sum\limits_{n=m+1}^\infty|\xi_n|^p\right)}^{1/p}<\epsilon\end{gather}\\\\
siempre que $m>M.$ $x_m=(\xi_1,\xi_2,\ldots\ldots,\xi_m,0,0,\ldots\ldots,)\in\mathfrak{L}^p;(x-x_m)\in\mathfrak{L}^p$\\\\
De otra parte,$x_m=(\xi_1,\xi_2,\ldots\ldots,\xi_m,0,0,\ldots\ldots,)\in\mathfrak{L}^p=\xi_1e_1+\xi_2e_2+\ldots\ldots+\xi_me_m\\\\
\therefore\hspace{0.5cm}\overline{f}(x_m)=\xi_1\overline{f}(e_1)+\xi_2\overline{f}(e_2)+\ldots\ldots+\xi_m\overline{f}(e_m)=\alpha_1\xi_1+\alpha_2\xi_2+\ldots\ldots+\alpha_m\xi_m=\sum\limits_{n=1}^m\alpha_n\xi_n\\\\
\|\overline{f}(x)-\overline{f}{x_m}\|=\|\overline{f}{x-x_m}\|\leqslant\|\overline{f}\|\|x-x_m\|_p\underset{~\eqref{19}}{\leqslant}\|f\|\epsilon,\\\\
\left|\sum\limits_{n=1}^\infty\alpha_n\xi_n-\sum\limits_{n=1}^m\alpha_n\xi_n\right|=\left|\sum\limits_{n=m+1}^\infty\right|\leqslant\sum\limits_{n=m+1}^\infty|\alpha_n\xi_n|\leqslant\|f\|_q\|x-x_m\|_p<\|\overline{f}\|\epsilon$ \\\\siempre que $m>M$\\\\
$\left|\overline{f}(x)-\sum\limits_{n=1}^\infty\alpha_n\xi_n\right|\leqslant\left|\overline{f}(x)-\sum\limits_{n=1}^m\alpha_n\xi_n\right|+\left|\sum\limits_{n=1}^m\alpha_n\xi_n-\sum\limits_{n=1}^\infty\alpha_n\xi_n\right|\\\\
\|\overline{f}\|\epsilon+\epsilon\|\overline{f}\|=2\|\overline{f}\|\epsilon,\hspace{0.5cm}\forall \epsilon$\\\\
Luego, $\overline{f}(x)=\sum\limits_{n=1}^\infty\alpha_n\xi_n.$\\
\end{proof}
\section{Mapeos Bilineales}
En esta secciòn se tratan los Mapeos Bilineales y se realiza un especial ènfasis al respecto sobre las diferencias entre èstos y los Mapeos Lineales; aunque estos estàn relacionados ìntimamente con el tema.\\\\
Sean $E,F,G$ esp.vectoriales sobre algùn campo escalar $\mathbb{K}=\mathbb{R}$ ò $\mathbb{C}$ de nùmeros reales o complejos. Un mapeo\\
$\begin{diagram}
\node[2]{\Phi:E\times F}\arrow{e,t}\\
\node{G}
\end{diagram}$\\\\
es denominado $\textit{bilineal}$ si los mapeos\\
$\begin{diagram}
\node[2]{\phi_x:F}\arrow{e,t}\\
\node{G}
\end{diagram}\hspace{0.5cm}\text{y}\hspace{0.5cm}\begin{diagram}
\node[2]{\phi_y:E}\arrow{e,t}\\
\node{G}\\
\node[2]{x}\arrow{e,t}\\
\node{\phi(x,y)}
\end{diagram}$\\
$\begin{diagram}
\node[2]{y}\arrow{e,t}\\
\node{\phi(x,y)}
\end{diagram}$\\\\
son lineales $\forall x\in E,F,$ sìmbolos: $\phi\in\mathsf{Bil}(E,F,G)$ si $$\phi_x\in L(F,G)\,\,\text{y $\phi_y\in L(E,G)$}$$\\\\
$\forall x\in E\,\,\text{y}\,\, y\in F.$ Por simplicidad: $\mathsf{Bil}(E,F)=\mathsf{Bil}(E,F,\mathbb{K}).$ Si $E,F,G$ son espacios normados (màs generalmente espacios vectoriales topològicos), el conjunto de mapeos bilineales continuos $E\times F\longrightarrow G$ se puede denotar por $\mathsf{Bil}(E,F,G)$ y $\mathsf{Bil}(E,F)$ si $G=\mathbb{K}.$\\\\
Para $$\phi(x,y)-\phi(x_0,y_0)=\phi(x-x_0,y-y_0)+\phi(x_0,y-y_0)$$\\\\
Los siguientes desarrollos son sencillos de deducir.\\\\
\begin{prop}
Para $\phi\in\mathsf{Bil}(E,F;G)$ las siguientes afirmaciones son equivalentes entre sì:\\
\begin{enumerate}
\item[(a)] $\phi$ es continua\\
\item[(b)] $\phi$ es continua (0,0)\\
\item[(c)] Sea C una constante, tal que $C\geqslant0,$ se tiene $\|\phi(x,y)\|_G\leqslant C\|x\|_E\|y\|_F,\forall (x,y)\in E\times F$\\\\
\end{enumerate}
\end{prop}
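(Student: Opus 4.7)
The plan is to mirror, with the adaptations forced by bilinearity, the scheme used for the first proposition of the excerpt, running the cycle $(a)\Rightarrow(b)\Rightarrow(c)\Rightarrow(a)$.

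The implication $(a)\Rightarrow(b)$ is immediate, since continuity on all of $E\times F$ includes continuity at the particular point $(0,0)$. For $(b)\Rightarrow(c)$, I would exploit bilinearity to rescale, exactly in the spirit of the argument used in the linear case. Fixing $\epsilon=1$ in the hypothesis of continuity at $(0,0)$, I get some $\delta>0$ such that $\|u\|_E<\delta$ and $\|v\|_F<\delta$ imply $\|\phi(u,v)\|_G<1$. For arbitrary nonzero $x\in E$ and $y\in F$, I plug in $u=\tfrac{\delta}{2}\tfrac{x}{\|x\|_E}$ and $v=\tfrac{\delta}{2}\tfrac{y}{\|y\|_F}$; by bilinearity these scalars factor out, giving
$$\phi(u,v)=\frac{\delta^{2}}{4\|x\|_E\|y\|_F}\,\phi(x,y),$$
so that $\|\phi(x,y)\|_G<\frac{4}{\delta^{2}}\|x\|_E\|y\|_F$. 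Taking $C=4/\delta^{2}$ yields (c); the cases $x=0$ or $y=0$ are automatic because $\phi$ vanishes as soon as one argument does.

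For $(c)\Rightarrow(a)$, I would fix an arbitrary $(x_0,y_0)\in E\times F$, take $\epsilon>0$, and use the algebraic identity
$$\phi(x,y)-\phi(x_0,y_0)=\phi(x-x_0,y-y_0)+\phi(x-x_0,y_0)+\phi(x_0,y-y_0),$$
which follows from expanding $\phi(x,y)=\phi\bigl((x-x_0)+x_0,(y-y_0)+y_0\bigr)$ by bilinearity. Bounding each of the three summands by (c) gives
$$\|\phi(x,y)-\phi(x_0,y_0)\|_G\leqslant C\|x-x_0\|_E\|y-y_0\|_F+C\|x-x_0\|_E\|y_0\|_F+C\|x_0\|_E\|y-y_0\|_F.$$
Choosing $\delta>0$ small enough (depending on $\|x_0\|_E$, $\|y_0\|_F$, and $C$) so that each term is below $\epsilon/3$ whenever $\max\{\|x-x_0\|_E,\|y-y_0\|_F\}<\delta$ delivers continuity at $(x_0,y_0)$.

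The main obstacle, and the point worth emphasising to the reader, is that the proof does \emph{not} close the loop through a fourth equivalent statement about uniform continuity, as happened in the linear case. In the identity above, the terms $\phi(x-x_0,y_0)$ and $\phi(x_0,y-y_0)$ carry the factors $\|y_0\|_F$ and $\|x_0\|_E$, so the $\delta$ obtained genuinely depends on $(x_0,y_0)$. Bilinear continuous maps are in general only pointwise, not uniformly, continuous; recognising and documenting this asymmetry with the linear case is the substantive content of the proof.
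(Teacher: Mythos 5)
Your proof is correct. The paper itself gives no proof of this proposition --- it only remarks that the equivalences are ``sencillos de deducir'' and displays, just before the statement, the decomposition $\phi(x,y)-\phi(x_0,y_0)=\phi(x-x_0,y-y_0)+\phi(x_0,y-y_0)$ as a hint --- and your argument is exactly the intended route: the rescaling trick from the linear case for $(b)\Rightarrow(c)$ and the expansion of $\phi\bigl((x-x_0)+x_0,(y-y_0)+y_0\bigr)$ for $(c)\Rightarrow(a)$. Note that the paper's displayed identity is missing the cross term $\phi(x-x_0,y_0)$, which you correctly restore; your closing observation that the resulting $\delta$ depends on $(x_0,y_0)$, so that continuity cannot be upgraded to uniform continuity, matches the paper's own subsequent remark about $(x,y)\rightsquigarrow xy$ on the diagonal.
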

Se observa claramente que $\|\phi\|=\min\bigl\{C\geqslant0\bigr\}=\sup\bigl\{\|\phi(x,y)\|_G\diagup x\in B_E, y\in B_F\bigr\}$\\
define una norma sobre $\mathsf{Bil}(E,F;G)$ que es uniforme y una norma completa si $\|.\|_G$ lo es.\\\\
Nòtese que el mapeo Bilineal continuo no es $\textit{uniformemente continuo}$ puesto que, la resticciòn $\mathbb{R}^2\longrightarrow\mathbb{R},(x,y)\rightsquigarrow xy$ sobre la diagonal es la funciòn $\mathbb{R}\ni x\rightsquigarrow x^2\in\mathbb{R}.$\\\\
Un mapeo Bilineal $\phi\in\mathsf{Bil}(E,F;G)$ es $\textit{separable continuo}$ si para todo $\phi_x:F\longrightarrow G$ y $\phi_y:E\longrightarrow G$ son continuos.\\\\
\begin{teor}
Sean $E,F,G$ espacios normados y E es completo. Para todo mapeo Bilineal continuo separable $\phi\in\mathsf{Bil}(E,F;G)$ es continuo.\\
\end{teor}
\begin{proof}
El conjunto $D=\bigl\{z'\circ\phi_y\diagup z'\in B_G',y\in B_F\bigr\}\subset E'$ es $\sigma(E',E)-$ continuo puesto que para todo $x\in E$ $$\left|\langle z'\circ_y,x\rangle\right|=\left|\langle z',\phi(x,y)\rangle\right|\leqslant\|z'\|\|\phi(x,y)\|\leqslant\|x\phi\|$$\\\\
Por teorema de MacKey's $\diagup$ la uniformidad principal continua muestra que D es $\textit{uniformemente continuo},$ i.e allì la constante $c\geqslant0$ tal que $\forall z'\in B_G'$ y $y\in B_F$ $$\left|\langle z',\phi(x,y)\rangle\right|=\left|\langle z'\circ\phi_y,x\rangle\right|\leqslant c\|x\|_E$$\\\\
para todo $x\in E.$ Esto prueba que $\|\phi\|\leqslant c.$\\\\
\end{proof}
Algunos ejemplos de mapeos Bilineales:\\
\begin{enumerate}
\item Para $x'\in E'$ y $y'\in F'$ $$[x'\underline{\otimes} y'](x,y)=\langle x',x\rangle\langle y',y\rangle$$\\\\
define una forma Bilineal continua y $\|x'\underline{\otimes} y'\|=\|x'\|\|y'\|.$ Si $x'_n$ y $y'_n$ pertenecen a una bola unitaria y $(\lambda_n)\in\mathcal{L}_1$, entonces $\varphi(x,y)=\sum\limits_{n=1}^\infty\lambda_n [x'\underline{\otimes} y'](x,y)$ es una funciòn bien definida y $\|\varphi\|\leqslant\sum\limits_{n=1}^\infty|\lambda_n|;$ pertenecen a la clase de las formas bilineales denominadas $\textit{nucleares}.$\\\\
\item La condiciòn del mapeo sobre el espacio $\mathcal{L}(E,F)$ de operadores lineales continuos\\
$\begin{diagram}
\node{\mathcal{L}(E,F)\times E}\arrow{e,t}\\
\node{F}\\
\node{T,x}\arrow{e,t}\\
\node{Tx}
\end{diagram}$\\\\
tiene norma 1 (si E y F son no triviales).\\\\
\item Si E y F son esp. vectoriales de dimensiòn finita, entonces todo mapeo bilineal $E\times F\longrightarrow G$ es continuo (empleando bases).\\\\
\item El mapeo de convoluciòn\\
$\begin{diagram}
\node{L_1(\mathbb{R})\times L_1(\mathbb{R})}\arrow{e,t}\\
\node{L_1(\mathbb{R})}\\
\node{(f,g)}\arrow{e,t}\\
\node{f*g}
\end{diagram}$\\
es bilineal.\\\\
\item Tomemos las funciones continuas sobre un espacio compacto K y sea E un espacio normado. Entonces\\
$\begin{diagram}
\node{C(K)\times E}\arrow{e,t}\\
\node{C(K,E)}\\
\node{f,x}\arrow{e,t}\\
\node{f(.)x}
\end{diagram}$\\
es bilineal.\\\\
Los mapeos\\
$\begin{diagram}
\node{\mathsf{Bil}(E,F)}\arrow{e,t}\\
\node{L(E,F^{*})}\\
\node{\varphi}\arrow{e,t}\\
\node{L\varphi}\\
\node{\langle L\varphi x,y\rangle=\varphi(x,y)}
\end{diagram}$\\\\
$\begin{diagram}
\node{L(E,F^{*})}\\
\node{T}\arrow{e,t}\\
\node{\beta T}\\
\node{\beta T(x,y)=\langle Tx,y\rangle}
\end{diagram}$\\\\
son espacios vectoriales isomorfos y son inversos a cualquier otro. Puesto que $\|\varphi\|=\sup\bigl\{\left|\varphi(x,y)\right|\diagup x\in B_E, y\in B_F\bigr\}=\sup\bigl\{\|L\varphi x\|\diagup x\in B_F\bigr\}=\|L\varphi\|\in[0,\infty];$ este isomorfismo reduce las formas bilineales continuas a espacios isomètricos normados $$\mathsf{Bil}(E,F)=\mathcal{L}(E,F')$$ $$\|l\varphi\|=\|\varphi\|\,\,\text{y $\|\beta T\|=\|T\|$}$$\\\\
Esta relaciòn es bàsica para la compresiòn de las ideas que desarrollaremos acontinuaciòn: La forma bilineal continua sobre $E\times F$ son exàctamente los operadores lineales continuos $E\longrightarrow F'.$\\\\
El Teorema de Hahn-Banach para operadores, no es completo para formas bilineales continuas en el siguiente sentido: Sea $G\subset E$ un subespacio y $\varphi\in\mathsf{Bil}(G,F)$;\\?`No existe allì una extensiòn $\overline{\varphi}\in\mathsf{L}(E,F')$ de $\varphi$? Esto puede pensarse, por la identificaciòn de las formas bilineales y los operadores; $\forall T\in\mathcal{L}(G,F')$ puede tener una extensiòn $\overline{T}\in\mathcal{L}(E,F').$\\
\end{enumerate}
Asì se pueden observar algunos ejemplos de operadores que no son extensiòn del caso especial dado en que $G=F'$ y $T=idG$. La identidad del mapeo: La extensiòn $\overline{T}$ puede ser una proyecciòn de E sobre G.\\\\
\begin{itemize}
\item Debido a un resultado famoso de Lindenstrauss-Tzafriri [Sobre el problema del complemento de los subespacios; Israel J. Math. 9(1971) 263-69] todos los espacios de Banach de dimensiòn infinita que no son isomorfos al espacio de Hilbert no son complemento de subespacios cerrados.\\
\item Puede observarse màs en concreto en el ejemplo dado por la funciòn de Rademacher definida sobre $[0,1]$ $$r_n(t)=(-1)^k\text{sì $t\in\left[\dfrac{k}{2^n},\dfrac{k+1}{2^n}\right[$}$$\\\\
(La forma es ortonormal al sistema en $L_2[0,1],$ medida de Lebesgue) y considèrese la inyecciòn\\
$\begin{diagram}
\node[2]{\mathcal{L}_2}\arrow{e,t}\\
\node{L_1[0,1]}\\
\node[2]{(\xi_n)}\arrow{e,t}\\
\node{\sum\limits_{n=1}^\infty\xi_n\gamma_n}
\end{diagram}$\\\\
La desigualdad de $\textit{Khintchine}:$ ''Para $1\leqslant p<\infty$ allì son constantes $a_p$ y $b_p\geqslant1$ tal que $a_p^{-1}{\left(\sum\limits_{k=1}^n|\alpha_k|^2\right)}^{1/2}\leqslant{\left(\int\limits_{D_n}{|\sum\limits_{k=1}^n\alpha_k\xi_k(w)|}^p\mu_n(dw)\right)}^{1/2}\leqslant b_p{\left(\sum\limits_{k=1}^n|\alpha_k|^2\right)}^{1/2}$\\\\
para todo $n\in\mathbb{N}$ y $\alpha_1,\ldots,\alpha_n\in\mathbb{C}.$''\\\\
puede mostrarnos que $L_1$ induce una norma equivalente sobre $\mathcal{L}_2$\\\\
\end{itemize}
La extensiòn de las complexiones, afortunadamente, no es un problema. Òbservese que allì no son uniformemente continuas!\\
\begin{prop}
Sean E,F,G espacios normados y G completo. Para todo $\phi\in\mathsf{Bil}(E,F;G)$ existe una extensiòn ùnica $\overline{\phi}\in\mathsf{Bil}(\overline{E},\overline{F};G).$ Ademàs, $\|\phi\|=\|\overline{\phi}\|.$\\
\end{prop}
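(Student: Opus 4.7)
El plan es extender $\phi$ por continuidad, aprovechando la densidad de $E$ en $\overline{E}$ y de $F$ en $\overline{F}$. Concretamente, dados $\overline{x}\in\overline{E}$ y $\overline{y}\in\overline{F}$, tomar\'{i}a sucesiones $\{x_n\}\subset E$ y $\{y_n\}\subset F$ con $x_n\longrightarrow\overline{x}$ y $y_n\longrightarrow\overline{y}$, y definir\'{i}a $\overline{\phi}(\overline{x},\overline{y})=\lim_{n\to\infty}\phi(x_n,y_n)$ como elemento de $G$.

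El paso cr\'{i}tico, y que espero sea el principal obst\'{a}culo, es justificar la existencia de ese l\'{i}mite: como $\phi$ no es uniformemente continua (seg\'{u}n se observ\'{o} p\'{a}ginas atr\'{a}s), no puedo invocar el teorema est\'{a}ndar de extensi\'{o}n por continuidad v\'{a}lido para funciones uniformemente continuas. En su lugar, recurrir\'{i}a a la identidad ya destacada en el texto
$$\phi(x_m,y_m)-\phi(x_n,y_n)=\phi(x_m-x_n,y_m)+\phi(x_n,y_m-y_n),$$
junto con la acotaci\'{o}n $\|\phi(u,v)\|_G\leqslant\|\phi\|\|u\|_E\|v\|_F$ provista por la caracterizaci\'{o}n de los bilineales continuos. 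Como toda sucesi\'{o}n convergente en un espacio normado es acotada, los factores $\|y_m\|$ y $\|x_n\|$ quedan uniformemente controlados, mientras que $\|x_m-x_n\|\longrightarrow 0$ y $\|y_m-y_n\|\longrightarrow 0$; as\'{i} el lado derecho tiende a cero. La sucesi\'{o}n $\{\phi(x_n,y_n)\}$ resulta ser de Cauchy en $G$ y, por ser $G$ Banach, converge.

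Despu\'{e}s verificar\'{i}a que $\overline{\phi}(\overline{x},\overline{y})$ no depende de las sucesiones elegidas: si $\{x_n'\}\longrightarrow\overline{x}$ y $\{y_n'\}\longrightarrow\overline{y}$, un argumento an\'{a}logo aplicado a $\phi(x_n,y_n)-\phi(x_n',y_n')$ muestra que ambas definiciones coinciden. La bilinealidad y la continuidad de $\overline{\phi}$ se obtendr\'{i}an pasando al l\'{i}mite en las identidades bilineales que satisface $\phi$ y en la desigualdad $\|\phi(x_n,y_n)\|\leqslant\|\phi\|\|x_n\|\|y_n\|$; esto \'{u}ltimo da $\|\overline{\phi}\|\leqslant\|\phi\|$, y la desigualdad rec\'{i}proca $\|\phi\|\leqslant\|\overline{\phi}\|$ es inmediata por ser $\overline{\phi}$ una extensi\'{o}n de $\phi$ y calcularse la norma como supremo sobre la bola unidad.

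Para la unicidad, cualquier otra $\widetilde{\phi}\in\mathsf{Bil}(\overline{E},\overline{F};G)$ que extienda $\phi$ coincide con $\overline{\phi}$ sobre el subconjunto denso $E\times F$ de $\overline{E}\times\overline{F}$; como ambas son continuas, coinciden en toda la completaci\'{o}n.
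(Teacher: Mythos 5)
Tu bosquejo es correcto, pero sigue una ruta genuinamente distinta de la del texto. El texto despacha la proposici\'{o}n remiti\'{e}ndose a la identificaci\'{o}n isom\'{e}trica $\mathsf{Bil}(E,F;G)=\mathcal{L}\bigl(E,\mathcal{L}(F,G)\bigr)$ y al teorema de extensi\'{o}n de operadores lineales continuos: como $\mathcal{L}(F,G)$ es completo cuando $G$ lo es, $L_\phi:E\longrightarrow\mathcal{L}(F,G)$ se extiende de manera \'{u}nica y con la misma norma a $\overline{E}$, y repitiendo el argumento en la segunda variable se llega a $\mathsf{Bil}(\overline{E},\overline{F};G)$; la ventaja es que todo el peso recae en un resultado ya disponible para aplicaciones lineales (que s\'{i} son uniformemente continuas), de modo que la falta de continuidad uniforme de $\phi$ nunca aparece como obst\'{a}culo. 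T\'{u}, en cambio, construyes la extensi\'{o}n directamente como l\'{i}mite de $\phi(x_n,y_n)$ y resuelves a mano el problema de la no continuidad uniforme mediante la identidad $\phi(x_m,y_m)-\phi(x_n,y_n)=\phi(x_m-x_n,y_m)+\phi(x_n,y_m-y_n)$ y la acotaci\'{o}n de las sucesiones convergentes; eso hace tu argumento m\'{a}s largo pero autocontenido, y deja completamente expl\'{i}cito por qu\'{e} basta la completitud de $G$ (la sucesi\'{o}n $\{\phi(x_n,y_n)\}$ es de Cauchy en $G$). Los pasos restantes que indicas ---buena definici\'{o}n, bilinealidad por paso al l\'{i}mite, $\|\overline{\phi}\|\leqslant\|\phi\|$ por la desigualdad en el l\'{i}mite y la rec\'{i}proca por restricci\'{o}n, unicidad por densidad de $E\times F$ en $\overline{E}\times\overline{F}$--- son todos correctos y completan la prueba.
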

Este desarrollo es sencillo para la relaciòn isomètrica $$\mathsf{Bil}(E,F;G)=\mathcal{L}\left(E,\mathcal{L}(F,G)\right)$$\\
y la extensiòn de los operadores lineales continuos.\\\\
\begin{obser}
Si E es un espacio normado de dimensiòn menor que 2, entonces la multiplicaciòn por el escalar $k,\mathbb{K}\times E\longrightarrow E$ es bilineal, continua, sobreyectiva sòlo sì E es cerrado.\\
\end{obser}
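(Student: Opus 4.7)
Plan: The observation bundles three assertions about the scalar–multiplication map
$m:\mathbb{K}\times E\to E,\ (k,x)\mapsto kx$, under the hypothesis $\dim E<2$, together with a clause linking surjectivity to the closedness of $E$. I would treat the three properties one by one, and then address the closedness remark separately.

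First I would verify bilinearity and continuity, which hold regardless of the dimension hypothesis. Bilinearity is immediate from the vector-space axioms of $E$: fixing $k$, the map $x\mapsto kx$ is $\mathbb{K}$-linear; fixing $x$, the map $k\mapsto kx$ is $\mathbb{K}$-linear by definition of scalar multiplication, so $m\in\mathsf{Bil}(\mathbb{K},E;E)$. For continuity I would invoke the criterion of Proposition~11: since $\|m(k,x)\|_E=|k|\,\|x\|_E$, condition (c) is satisfied with constant $C=1$, whence $m$ is continuous and in fact $\|m\|=1$. Alternatively, the elementary bound $\|kx-k_0x_0\|_E\le |k|\,\|x-x_0\|_E+|k-k_0|\,\|x_0\|_E$ gives continuity directly at each $(k_0,x_0)$.

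Next I would use the dimension hypothesis to establish surjectivity, splitting into the two possible cases. If $\dim E=0$ then $E=\{0\}$ and $m$ trivially maps onto $E$. If $\dim E=1$, fix a basis vector $e_0\in E$ with $e_0\ne 0$; every $y\in E$ has a unique representation $y=\lambda e_0$ with $\lambda\in\mathbb{K}$, so $y=m(\lambda,e_0)$ lies in the image. Hence $m$ is surjective onto $E$ in both cases permitted by the hypothesis $\dim E<2$.

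Finally, for the closedness clause I would invoke the corollary proved earlier in these notes: every finite-dimensional subspace of a normed space is closed. Since $\dim E<2$ forces $E$ to be finite-dimensional, $E$ is automatically closed (in itself, and in any ambient normed space into which it sits isometrically), so the "only if" condition tying surjectivity of $m$ to $E$ being closed is satisfied vacuously. The main obstacle here is interpretive rather than technical: the Spanish phrase "sobreyectiva sólo sí $E$ es cerrado" must be parsed to pick out which of the three properties is being conditioned on closedness; once that is clarified, the finite-dimensional hypothesis delivers closedness for free, and each of the three verifications reduces to material already established in the notes (Proposition~11 for continuity and the corollary on closedness of finite-dimensional subspaces).
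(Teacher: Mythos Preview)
Your reading of the observation is the natural literal one, and the verifications you give (bilinearity from the axioms, continuity via the bound $\|kx\|=|k|\,\|x\|$, surjectivity in dimensions $0$ and $1$, closedness from finite dimension) are all correct for that reading. The trouble is that this interpretation trivialises the observation, and the paper's own proof shows that something entirely different is meant.

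Look at what the paper actually proves: it picks a nonempty open $V\subset E$ and a functional $y'\in E'$ with $\inf_{x\in V}|\langle y',x\rangle|>0$, takes $U$ the open unit ball of $\mathbb{K}$, and argues that $0\in U\cdot V$ while $0$ is \emph{not} an interior point of $U\cdot V$ because $U\cdot V\cap\ker y'=\{0\}$. That argument needs $\ker y'$ to be a nontrivial subspace, i.e.\ $\dim E\geq 2$, and its conclusion is that the image of the open set $U\times V$ under scalar multiplication is not open. In other words, the intended statement (the Spanish is garbled: ``menor que $2$'' should be ``al menos $2$'', and the closing clause should say the map is \emph{not open}) is: for $\dim E\geq 2$, scalar multiplication $\mathbb{K}\times E\to E$ is bilinear, continuous and surjective, yet fails to be an open map. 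This is the promised counterexample to an open-mapping theorem for bilinear maps, set up by the remark just before the observation and confirmed by the sentence just after it about $0$ belonging to the interior of $\phi(B_E,B_F)$.

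So the gap in your proposal is not technical but interpretive: you resolved the ambiguity in the direction that makes the statement vacuous, whereas the paper's proof shows the substantive content lies in the \emph{failure of openness}. A correct write-up should assume $\dim E\geq 2$, exhibit $V$ and $y'$ as above (e.g.\ $V$ a small ball around some $x_0\neq 0$ and $y'$ with $y'(x_0)\neq 0$), note that $0=0\cdot v\in U\cdot V$, and then check that every nonzero $w\in\ker y'$ is excluded from $U\cdot V$ (if $w=\lambda v$ with $v\in V$ then $\langle y',v\rangle=0$, contradicting the choice of $V$), so $U\cdot V$ contains no ball about $0$.
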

\begin{proof}
Tomemos un conjuto abierto no vacìo $V\subset E$ y un funcional $y'\in E'$ con $$\inf|\langle y',x\rangle|>0$$\\
Si U es la bola unitaria abierta en $\mathbb{K},$ entonces $0\in U.V,$ sòlo si 0 no es un punto interior de $U.V$ puesto que $U.V\cap\ker y'=\bigl\{0\bigr\}$\\
\end{proof}
Esto tambièn es posible para los ejemplos $\phi\in\mathsf{Bil}(E,F;G)$ que son sobreyectivos y conjuntos cerrados en cero, i.e, cero pertenece al interior de $\phi(B_E,B_F).$\\\\
Otro propiedad negativa de los mapeos bilineales continuos es que ellos no permanecen continuos para las topologìas dèviles: los vectores unitarios $e_n$ en $\mathcal{L}_2$\,\,(en $\mathbb{R}$) convergen dèvilmente a cero exceptuando $(e_n| e_n)_{\mathcal{L}_2}=1$\\\\
Para espacios normados E y F se tiene el siguiente desarrollo isomètrico: $$\phi:\mathsf{Bil}(E,F)=\mathcal{L}(E,F')\hookrightarrow\mathcal{L}(F'',E')=\mathsf{Bil}(F'',E)=\mathsf{Bil}(E,F''),$$\\
$$T\rightsquigarrow T'$$\\
donde la ùltima igualdad es la ''transposiciòn'' obvia $U^t(x,y)=U(y,x).$ Para todo $\phi\in\mathsf{Bil}(E,F)$ se define ${\varphi}^\wedge=\phi(\varphi)\in\mathsf{Bil}(E,F'');$ si satisface $\|{\varphi}^\wedge\|=\|\varphi\|=\|L_{\varphi}\|$ y $${\varphi}^\wedge(x,y'')=\left\langle L'_\varphi(y''),x\right\rangle_{E',E}=\left\langle y'',L_\varphi(x)\right\rangle_{F'',F}=\left\langle y'',\varphi(x,.)\right\rangle_{F'',F}$$\\\\
para todo $(x,y'')\in E\times F''.$ Puesto que, por definiciòn $\varphi(x,y)=\langle y, L_\varphi(x)\rangle_{F,F''}$ para todo $x\in E$ y $y\in F,$ el mapeo ${\varphi}^\wedge$ extensiòn de $\varphi$ para $E\times F$ a $E\times F''$ con igual normal. ${\varphi}^\wedge$ es denominada $\textit{la extensiòn canònica derecha}$ de $\varphi$.\\\\
\begin{prop}
Sean E y F espacios normados y $\varphi\in\mathsf{Bil}(E,F).$ Entonces ${\varphi}^\wedge$ es la ùnica forma bilineal separada $\sigma(E,E')-\sigma(F'',F')$-mapeo continua $\psi:E\times F''\longrightarrow\mathbb{K}$ que extiende a $\varphi.$\\
\end{prop}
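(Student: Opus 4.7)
The plan is to verify the three claimed properties of $\varphi^{\wedge}$ in turn: (i) it is a bilinear extension of $\varphi$; (ii) it is separately continuous for the weak topologies $\sigma(E,E')$ on $E$ and $\sigma(F'',F')$ on $F''$; (iii) it is the unique such map. Property (i) has essentially been established in the discussion preceding the statement, where it is noted that $\varphi^{\wedge}(x,y)=\langle y,L_\varphi(x)\rangle=\varphi(x,y)$ for $(x,y)\in E\times F$ (identifying $F\hookrightarrow F''$), and bilinearity is immediate from the bilinearity of the dual bracket and the linearity of $L_\varphi$ and $L'_\varphi$.

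For (ii) I would use the two equivalent expressions given in the excerpt,
\[
\varphi^{\wedge}(x,y'')=\langle L'_\varphi(y''),x\rangle_{E',E}=\langle y'',L_\varphi(x)\rangle_{F'',F'}.
\]
Fixing $x\in E$, the map $y''\mapsto \langle y'',L_\varphi(x)\rangle$ is by definition $\sigma(F'',F')$-continuous, since $L_\varphi(x)\in F'$. Fixing $y''\in F''$, the map $x\mapsto \langle L'_\varphi(y''),x\rangle$ is by definition $\sigma(E,E')$-continuous, since $L'_\varphi(y'')\in E'$. This delivers separate continuity without any further calculation.

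For (iii), let $\psi:E\times F''\longrightarrow\mathbb{K}$ be any separately continuous bilinear map with $\psi|_{E\times F}=\varphi$. Fix $x\in E$. Then $\psi(x,\cdot)$ and $\varphi^{\wedge}(x,\cdot)$ are both $\sigma(F'',F')$-continuous linear functionals on $F''$ that agree on the canonical image of $F$ in $F''$. By Goldstine's theorem, this image is $\sigma(F'',F')$-dense in $F''$, so the two functionals coincide on all of $F''$. Since $x\in E$ was arbitrary, $\psi=\varphi^{\wedge}$.

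The main obstacle is the density step in (iii): everything else reduces to unwinding definitions, but the uniqueness genuinely requires weak$^{*}$-density of $F$ in $F''$ (Goldstine), which is the only non-trivial external input. One should also note, to justify the argument cleanly, that separate $\sigma(E,E')$--$\sigma(F'',F')$ continuity of a bilinear form already forces each partial map to be linear and weakly continuous, so the reduction to comparing two weak$^{*}$-continuous functionals on $F''$ is legitimate.
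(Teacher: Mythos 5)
Your proposal is correct and follows essentially the same route as the paper: the paper's (very terse) proof likewise says the extension property is immediate from the definition and derives uniqueness from the $\sigma(F'',F')$-density of $F$ in $F''$, which is exactly the Goldstine/weak$^{*}$-density step you make explicit. Your write-up simply fills in the separate-continuity verification and the comparison of the two weak$^{*}$-continuous functionals that the paper leaves implicit.
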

\begin{proof}
Que ${\varphi}^\wedge$ es una extensiòn es claro por el desarrolllo de la definiciòn para la ecuaciòn; se obtiene de las desigualdades para los funcionales $\sigma(F'',F')$- densidad de F en F''.\\
Claro, allì se tiene la extensiòn canònica izquierda $^\wedge{\varphi}$ sobre $E''\times F$ definida por $^\wedge{\varphi}={\left({(\varphi^t)}^\wedge\right)}^t$ dado por $$^\wedge{\varphi}(x'',y)=\langle x'',(L_\varphi\circ k_f)y\rangle_{E'',E'}=\langle x'',\varphi(.,y)\rangle_{E'',E'}$$\\
\end{proof}
De que manera ?`Son los funcionales ${(^\wedge{\varphi}})^\wedge$ y $^\wedge{({\varphi}}^\wedge)$ sobre $E''\times F''$ relativos? Bastante sorprendente, el siguiente desarrollo exacto:\\
\begin{corol}
Para $\varphi\in\mathsf{Bil}(E,F)$ el desarrollo de los tres estamentos siguientes son equivalentes:\\
\begin{enumerate}
\item Las dos extensiones ''canònicas'' ${(^\wedge{\varphi}})^\wedge$ y $^\wedge{({\varphi}}^\wedge)$ de $\varphi$ en $E''\times F''$ coinciden.\\
\item Allì $\psi\in\mathsf{Bil}(E'',F'')$ que es separable $\sigma(E'',E')-\sigma(F'',F')-$ son continuos y extensiones de $\varphi$.\\
\item $L_\varphi:E\longrightarrow F$ es compacto-dèvil.\\\\
En este caso el funcional $\psi$ en [2] aes igual a ${(^\wedge{\varphi}})^\wedge=^\wedge{({\varphi}}^\wedge)$\\
\end{enumerate}
\end{corol}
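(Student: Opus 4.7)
El plan es reducir el enunciado al lenguaje de operadores. Sea $T = L_\varphi \in \mathcal{L}(E, F')$, de modo que $\varphi(x,y) = (Tx)(y)$, y consideraremos las transpuestas sucesivas $T' \in \mathcal{L}(F'', E')$ y $T'' \in \mathcal{L}(E'', F''')$, junto con las inmersiones can\'{o}nicas $j_F : F \to F''$ y $j_{F'} : F' \to F'''$.

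Primero reescribir\'{e} ambas extensiones iteradas en t\'{e}rminos de $T''$. Partiendo de la f\'{o}rmula $\varphi^\wedge(x,y'') = y''(Tx)$ dada en la proposici\'{o}n anterior, un c\'{a}lculo directo da
\[
{}^\wedge(\varphi^\wedge)(x'', y'') = (T''x'')(y''),\qquad (x'',y'') \in E'' \times F''.
\]
Por otra parte, ${}^\wedge\varphi(x'', y) = (T''x'')(j_F y)$ sobre $E'' \times F$, y al extender esta expresi\'{o}n en la segunda variable se obtiene
\[
({}^\wedge\varphi)^\wedge(x'', y'') = y''\bigl( U(x'') \bigr),
\]
donde $U(x'') := (T''x'') \circ j_F \in F'$; en otras palabras, $({}^\wedge\varphi)^\wedge$ se corresponde con el operador $j_{F'} \circ U \in \mathcal{L}(E'', F''')$.

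Con esto la equivalencia (1) $\Longleftrightarrow$ (3) se vuelve transparente: las dos extensiones coinciden en todo $E'' \times F''$ si y solo si $T''x'' = j_{F'}(U(x''))$ en $F'''$ para cada $x'' \in E''$, es decir $T''(E'') \subseteq j_{F'}(F')$. Esta inclusi\'{o}n es, por el criterio de Gantmacher, exactamente la caracterizaci\'{o}n de que $T = L_\varphi$ sea compacto-d\'{e}bil.

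Para las implicaciones restantes: en (1) $\Longrightarrow$ (2) la forma com\'{u}n $\psi$ hereda la continuidad $\sigma(E'', E')$ en la primera variable de su descripci\'{o}n como $({}^\wedge\varphi)^\wedge$ y la continuidad $\sigma(F'', F')$ en la segunda de su descripci\'{o}n como ${}^\wedge(\varphi^\wedge)$, ambas establecidas en la proposici\'{o}n anterior. Rec\'{i}procamente, (2) $\Longrightarrow$ (1) seguir\'{a} de la unicidad en dicha proposici\'{o}n aplicada a cada variable por separado: para $x'' \in E''$ fijo, $\psi(x'',\cdot)$ es la \'{u}nica extensi\'{o}n $\sigma(F'',F')$-continua de ${}^\wedge\varphi(x'',\cdot)$ a $F''$, y por tanto debe coincidir con $({}^\wedge\varphi)^\wedge(x'',\cdot)$; el argumento sim\'{e}trico da $\psi = {}^\wedge(\varphi^\wedge)$. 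El obst\'{a}culo principal ser\'{a} la contabilidad cuidadosa entre $F$, $F''$ y $F'''$ al traducir la igualdad puntual de las dos extensiones en la inclusi\'{o}n $T''(E'') \subseteq j_{F'}(F')$, identificando as\'{i} la condici\'{o}n algebraica con la caracterizaci\'{o}n de Gantmacher de operadores d\'{e}bilmente compactos.
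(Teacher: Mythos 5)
Tu propuesta es correcta y sigue esencialmente el mismo camino que el texto: identificar $({}^\wedge\varphi)^\wedge$ y ${}^\wedge(\varphi^\wedge)$ con los operadores $j_{F'}\circ P_{F'}\circ L''_\varphi$ y $L''_\varphi$ de $E''$ en $F'''$, reducir la coincidencia de ambas extensiones a la inclusi\'{o}n $L''_\varphi(E'')\subseteq j_{F'}(F')$ e invocar el criterio de Gantmacher, mientras que $(1)\Leftrightarrow(2)$ se obtiene de la unicidad de la proposici\'{o}n anterior. Tu redacci\'{o}n es de hecho m\'{a}s expl\'{i}cita que la del texto en la verificaci\'{o}n de $(2)\Rightarrow(1)$ variable por variable, pero no hay diferencia de fondo.
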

\begin{proof}
La proposiciòn implica sencillamente que $(a)\Leftrightarrow(b)$. Se observa la equivalencia de (a) y (c), por $$L{(^\wedge{\varphi}})^\wedge=K_{F'}\circ P_{F'}\circ L''_\varphi:E''\longrightarrow F''$$\\
$$L^\wedge{({\varphi}}^\wedge)=P_{F'''}\circ L''_{{\varphi}^\wedge}=P_{F'''}(K_F'\circ L_\varphi)''=$$\\
$$P_{F'''}\circ K''_{F'}\circ L''_\varphi=L''_\varphi$$
Ahora afirmamos que $L_\varphi$ es compacto-dèvil sii $L''_\varphi(E'')\subset F'.$\\
\end{proof}
\section{La teorìa algebraìca del producto tensorial.}
El objeto de estudio de los mapeos bilineales puede reducirse al estudio de los mapeos lineales. La construcciòn de estos nuevos espacios vectoriales $E\otimes F$ es, dada por el anàlisis, de una manera simple.\\\\
Para un conjunto arbitrario A definamos $\mathcal{F}(A)$ como el conjunto de todos las funciones $f:A\longrightarrow\mathbb{K}$ dentro de un soporte finito, i.e. $f(\alpha)=0$ excepto sobre un subconjunto finito de A. Para $\alpha\in A$ el $''\alpha-\text{vector unitario}''$ es la funciòn $e_\alpha\in\mathcal{F}(A)$ definida por $$e_{\alpha}(\beta)=\delta_{\alpha\beta}$$\\
del Delta de Kronecker. Es claro que para cada $f\in\mathcal{F}(A)$ se tiene la ùnica representaciòn $$f=\sum\limits_\alpha\in Af(\alpha)e_\alpha,$$\\
en otras palabras:$(e_\alpha)_{\alpha\in A}$ es un conjunto algebraico bàsico de $\mathcal{F}(A).$ Ahora tomemos dos conjunto A,B y consideremos, el mapeo bilineal\\
$\begin{diagram}
\node{\Psi_0:\mathcal{F}A\times\mathcal{F}B}\arrow{e,t}\\
\node{\mathcal{F}(A\times B)}\\
\node{f,g}\arrow{e,t}\\
\node{f(.),g(..)}
\end{diagram}$\\\\
Entonces $e(\alpha,\beta)=\Psi_0(e_\alpha,e_\beta),$ es claro que $$\text{ext im$\Psi_0$}=\mathcal{F}(A\times B)$$\\
Para $\psi\in\mathsf{Bil}(\mathcal{F}(A),\mathcal{F}(B),G)$ definamos un funcional $T\in L(\mathcal{F}(A\times B),G)$ por $$T(e(\alpha,\beta))=\psi(e_\alpha,e_\beta)$$\\
(una extensiòn lineal). Es obvio que T es el ùnico mapeo lineal $\mathcal{F}(A\times B)\longrightarrow G$ en el interior de $$T\circ\Psi_0=\psi$$\\
Esto demuestra que $$L(\mathcal{F}(A\times B),G)=\mathsf{Bil}(\mathcal{F}(A),\mathcal{F}(B);G)$$\\
$$T\rightsquigarrow T\circ\Psi_0$$\\
es un isomorfismo lineal de espacios vectoriales.

\end{document}